\numberwithin{equation}{section}
\definecolor{darkgreen}{rgb}{0,0.45,0} 
  \newtheorem{proposition}{Proposition}[section]
  \newtheorem{lemma}[proposition]{Lemma}
  \newtheorem{theorem}[proposition]{Theorem}
  \theoremstyle{definition}
  \newtheorem{definition}[proposition]{Definition}
\theoremstyle{remark}
  \newtheorem{remark}[proposition]{Remark}
  \newcounter{c}
  \newcommand{\etyk}[1]{\vspace{-7.4mm}$$\begin{equation}\Label{#1}
  \addtocounter{c}{1}}
  \renewcommand{\]}{\ifnum \value{c}=1 $$\else \end{equation}\fi}
\newenvironment{amssidewaysfigure}
  {\begin{sidewaysfigure}\vspace*{.5\textwidth}\begin{minipage}{\textheight}\centering}
  {\end{minipage}\end{sidewaysfigure}}
\newcommand{\op}{^\mathsf{op}}
\newcommand{\rev}{^\mathsf{rev}}
\newcommand{\vop}{{\raisebox{-3pt}{\hspace{1pt}\rotatebox{90}{$^\mathsf{op}$}\!}}}
\begin{document}

\title{Maschke type theorems for Hopf monoids}

\author{Gabriella B\"ohm} 
\address{Wigner Research Centre for Physics, H-1525 Budapest 114,
P.O.B.\ 49, Hungary}
\email{bohm.gabriella@wigner.mta.hu}
\date{April 2020}
%\subjclass{}
 
\begin{abstract}
We study integrals of Hopf monoids in duoidal endohom categories of naturally Frobenius map monoidales in monoidal bicategories. We prove two Maschke type theorems, relating the separability of the underlying monoid and comonoid, respectively, to the existence of normalized integrals. It covers the examples provided by
Hopf monoids in braided monoidal categories,
weak Hopf algebras,
Hopf algebroids over central base algebras,
Hopf monads on autonomous monoidal categories and
Hopf categories.
\end{abstract}
  
\maketitle

%%%%%%%%%%%%%%%%%%     INTRODUCTION  %%%%%%%%

\section*{Introduction} \label{sec:intro}

One can find in recent literature many co-existing --- sometimes competing --- generalizations of Hopf algebra; which is itself a generalization of group algebra. They were introduced by different motivations and each of them has a more or less different set of axioms. However, they share some essential features that makes one wonder whether they all could be seen as various instances of some common unifying structure. This question was answered in \cite{BohmLack} in some extent, by showing that Hopf monoids in braided monoidal categories, weak Hopf algebras \cite{BNSz}, Hopf algebroids over a central base algebra \cite{Ravenel}, and Hopf monads on autonomous monoidal categories \cite{BruguieresVirelizier} are examples of Hopf comonads on a naturally Frobenius map monoidale $M$ in a suitable monoidal bicategory $\mathcal B$. Consequently, they also can be seen as bimonoids in the duoidal endohom category $\mathcal B(M,M)$, possessing an antipode in an appropriate sense. In \cite{span|V} also Hopf categories of \cite{BCV:HopfCat} were shown to fit this framework.

In this paper we use the term {\em Hopf monoid} for those bimonoids 
in the duoidal endohom category $\mathcal B(M,M)$ that arise from a Hopf comonad on the naturally Frobenius map monoidale $M$ in a monoidal bicategory $\mathcal B$; and hence possess an (unique) antipode in the sense of \cite[Theorem 7.2]{BohmLack}. The terminology is strictly restricted to this setting, it is not used in more general duoidal categories.

Maschke's classical theorem \cite{Maschke} sates that the group algebra $kG$ of a finite group $G$ over an arbitrary field $k$ is semisimple if and only if the characteristic of $k$ does not divide the order of $G$. A generalization to Hopf algebras is due to Larson and Sweedler. In \cite{LarsonSweedler} they proved that a Hopf algebra $H$ over a field is semisimple if and only if it possesses a normalized integral; that is, an $H$-module section of the counit.
A Hopf algebra over a field turns out to be semisimple if and only if it is separable; that is, its multiplication admits a bimodule section. This is no longer true for Hopf algebras over more general commutative base rings. In this more general situation it is the separability of a Hopf algebra that becomes equivalent to the existence of a normalized integral, see e.g. \cite{CMIZ}.
Maschke type theorems --- relating separability to the existence of normalized integrals --- were proved individually for most generalizations of Hopf algebra. For weak Hopf algebras in \cite{BNSz}, for Hopf algebroids in \cite{Hgd_int}, for Hopf monads in \cite{BruguieresVirelizier} and \cite{TuraevVirelizier}. Integrals in Hopf categories occurred recently in \cite{BFVV:LarsonSweedler}.

The aim of this paper is a unification of  all these generalizations into a single theorem, relating normalized integrals (in a suitable sense) to the separability of the constituent monoid (or coseparability of the constituent comonoid) of a Hopf monoid in the duoidal category $\mathcal B(M,M)$ for a naturally Frobenius map monoidale $M$ in some monoidal bicategory $\mathcal B$.
This setting is suitable to prove such a theorem because by \cite[Theorem 7.2]{BohmLack} the antipode is available (which is not the case for more general base monoidale).

The paper is organized as follows. In Section \ref{sec:B(M,M)} we recall the necessary background about 
Hopf comonads on a naturally Frobenius map monoidale $M$ in a monoidal bicategory $\mathcal B$. Such a gadget is interpreted as a Hopf monoid in the duoidal category 
$\mathcal B(M,M)$.
In Section \ref{sec:Maschke} integrals are defined for arbitrary bimonoids in duoidal categories. For Hopf monoids in $
\mathcal B(M,M)$, the existence of a normalized integral is sown to be equivalent to the separability of the constituent monoid.
In Section \ref{sec:DualMaschke} cointegrals are defined for arbitrary bimonoids in duoidal categories. For Hopf monoids in $
\mathcal B(M,M)$, the existence of a normalized cointegral is sown to be equivalent to the coseparability of the constituent comonoid.
In final Section \ref{sec:applications}, our main Theorems \ref{thm:Maschke} and \ref{thm:DualMaschke} are applied to each of the examples provided by Hopf monoids in braided monoidal categories,
weak Hopf algebras,
Hopf algebroids over central base algebras,
Hopf monads on autonomous monoidal categories and
Hopf categories.
While in the first four cases we re-obtain known results thereby, for Hopf categories it gives the first such theorems.

Note that the statements of our main Theorems \ref{thm:Maschke} and \ref{thm:DualMaschke} are apparently  dual of each other. However, no duality principle is known that would allow us to derive one of them from the other. They are proved independently, by basically different steps.

\subsection*{Acknowledgement} \ 
Financial support by the Hungarian National Research, Development and Innovation Office – NKFIH (grant K124138) is gratefully acknowledged. 

%%%%%%%%%%%%%%%%%%    SEC 1   %%%%%%%%%%%%%%

\section{Hopf comonads on naturally Frobenius map monoidales}
\label{sec:B(M,M)}

\subsection{Duoidal endohom category of a map monoidale} 
We begin with briefly recalling some information needed from \cite[Section  3]{BohmLack}. Then we prove some new identities for later use.

Throughout we work in a monoidal bicategory $\mathcal B$. Relying on the coherence theorem of \cite{GordonPowerStreet} --- which says that any monoidal bicategory is equivalent, as a tricategory, to a Gray monoid --- we do not denote explicitly the coherence 2-cells in $\mathcal B$ but the interchange isomorphisms. The monoidal product will be denoted by juxtaposition and $I$ stands for the monoidal unit. 
We use dots to denote the horizontal composition in $\mathcal B$.

We use the Australian term {\em monoidale} for a {\em pseudo-monoid} $(M,m,u)$ in $\mathcal B$. Its associativity and unitality iso 2-cells are denoted by $\cong$ without introducing symbols for them. A monoidale $(M,m,u)$ is a {\em map monoidale} if the 1-cells $m:MM \to M$ and $u:I \to M$ possess respective right adjoints $m^*$ and $u^*$ (with units $\eta_m:1\to m^*.m$ and $\eta_u:1 \to u^*.u$, counits $\varepsilon_m:m.m^*\to 1$ and $\varepsilon_u:u.u^* \to 1$) --- in which case $(M,m^*,u^*)$ is a comonoidale (i.e. pseudo-comonoid) in $\mathcal B$. Its left counitality iso 2-cell and its inverse, for example, are the mates of the left unitality 2-cells of the monoidale $(M,m,u)$; that is, the mutually inverse 2-cells
\begin{equation} \label{eq:lcounit*}
\xymatrix@R=8pt{
u^*1.m^*  \ar[r]^-\cong &
\underline{m.u1}.u^*1.m^*  \ar[r]^-{1.\varepsilon_u 1.1} &
m.m^* \ar[r]^-{\varepsilon_m} &
1 \\
1 \ar[r]^-{\eta_u1} &
u^*1.u1 \ar[r]^-{1.\eta_m.1} &
u^*1.m^*.\underline{m.u1} \ar[r]^-\cong &
u^*1.m^*}
\end{equation}
that will be denoted by $\cong$ for brevity too. Whenever we only put a symbol $\cong$ as a label of an arrow, we help the reader by under- or overlining the part to which some coherence iso 2-cell is applied.

For any 0-cell $M$ in $\mathcal B$, the endohom category $\mathcal B(M,M)$ is monoidal via the opposite $f \circ g:=g.f$ of the  horizontal composition of $\mathcal B$ and the monoidal unit given by the identity 1-cell $i$. If $M$ is also equipped with the structure of a map monoidale $(m,u)$, then there is a second monoidal structure on $\mathcal B(M,M)$ provided by the convolution product $f\bullet g:=m.fg.m^*$ whose unit is $j:=u.u^*$. (We omit explicitly denoting the associativity and unitality natural isomorphisms in both of these monoidal categories.) In fact, these monoidal structures combine into a duoidal structure (called a {\em 2-monoidal} structure in \cite{AguiarMahajan}), see \cite{Street:Belgian} and \cite[Section  3]{BohmLack}. As in \cite{BohmLack}, we denote its structure morphisms by
$$
\xymatrix@C=5pt@R=0pt{
&&&&& MM \ar@/^.6pc/[rrd]^-{m} \\
(i \bullet i \ar[rr]^-{\xi_0} &&
i) \ar@{=}[r]  & M \ar@/^.6pc/[rru]^-{m^*} \ar@/_1.4pc/@{=}[rrrr] 
&&
\Downarrow \varepsilon_m &&
M}
\qquad
\xymatrix@C=5pt@R=0pt{
&&&&& I \ar@/^.5pc/[rrd]^-{u} \\
(j \ar[rr]^-{\xi^0_0} &&
i)\ar@{=}[r] &
M \ar@/^.5pc/[rru]^-{u^*} \ar@/_1.4pc/@{=}[rrrr] &&
\Downarrow \varepsilon_u &&
M} 
$$
$$
\xymatrix@C=20pt@R=0pt{
(j \ar[r]^-{\xi^0} &
j\circ j )=M \ar[r]^-{u^*} &
I \ar@/^1.4pc/@{=}[rr] \ar@/_.5pc/[rd]_-u &
\Downarrow \eta_u &
I \ar[r]^-u &
M \\
&&& M \ar@/_.5pc/[ru]_-{u^*} }
$$
and for any 1-cells $a,b,c,d:M\to M$, we write $ \xi: (a\circ b) \bullet (c\circ d) \to
(a\bullet c) \circ (b\bullet d)$ for 
$$
\xymatrix@C=15pt@R=6pt{
&&&& MM \ar@/^.6pc/[rrd]^-{1c} \ar@{}[d]|-\cong \\
M \ar[r]^-{m^*} &
MM \ar[r]^-{a1} &
MM \ar[r]_-{1c} \ar@/^.6pc/[rru]^-{b1} &
MM \ar@{=}[rr] \ar@/_.7pc/[rd]_-m &
\ar@{}[d]|-{\Downarrow \eta_m} &
MM \ar[r]_-{b1} &
MM \ar[r]^-{1d} &
MM \ar[r]^-m &
M. \\
&&&& M \ar@/_.7pc/[ru]_-{m^*}}
$$

For any 1-cells $b,c:M \to M$, we introduce the 2-cells $\lambda_{b,c}: [(b\circ j) \bullet i] \circ c \to b \bullet c$ as
$$
\xymatrix@R=8pt{
&&&& MM \ar@{}[d]|-\cong \ar@/^.4pc/[rd]^-m \\
&&& M \ar@/^.4pc/[ru]^-{u1} \ar@{=}[rr] \ar@{}[rrd]|-\cong && 
M \ar@/^.4pc/[rd]^-c\\
M \ar[r]^-{m^*} &
MM \ar[r]^-{b1} &
MM \ar[r]^-{1c} \ar@/^.4pc/[ru]^-{u^*1}&
MM \ar@{=}[rr] \ar@/_.6pc/[rd]_(.4)m &
\ar@{}[d]|-{\Downarrow \eta_m} &
MM \ar[r]^-{u^*1} \ar@{}[d].|-\cong &
M \\
&&&& M \ar@/_.6pc/[ru]_(.6)*-<.4em>{_{m^*}} \ar@/_1.4pc/@{=}[rru] &&}
$$
and symmetrically, we introduce $\varrho_{b,c}:b\circ [i\bullet (j\circ c)] \to b \bullet c$ as 
$$
\xymatrix@R=8pt{
&& MM \ar@{}[d]|-\cong \ar@/^.4pc/[rd]^-{1u^*} \\
& M \ar@/^.4pc/[ru]^-{m^*} \ar@{=}[rr] \ar@{}[rrd]|-\cong && 
M \ar@/^.4pc/[rd]^-{1u} \\
M \ar[r]^-{1u} \ar@/^.4pc/[ru]^-{b} \ar@/_1.4pc/@{=}[rrd] &
MM \ar@{=}[rr] \ar@/_.6pc/[rd]_(.4)*-<.3em>{_m} \ar@{}[d]|-\cong &
\ar@{}[d]|-{\Downarrow \eta_m} &
MM \ar[r]^-{b1} &
MM \ar[r]^-{1c} &
MM \ar[r]^-{m} &
M. \\
&& M \ar@/_.6pc/[ru]_(.6)*-<.4em>{_{m^*}}  &&}
$$
They are natural in $b$ and $c$, and for all 1-cells $a:M \to M$, they are easily seen to render commutative (up-to the omitted coherence iso 2-cells like those in \eqref{eq:lcounit*}) the following diagram.
\begin{equation} \label{eq:Lem.6}
\xymatrix{
a \ar[r]^-{(\xi^0 \bullet 1) \circ 1} \ar[d]_-{1\circ (1\bullet \xi^0)} \ar@{=}[rd] &
[(j\circ j) \bullet i] \circ a \ar[d]^-{\lambda_{j,a}} \\
a\circ [i\bullet (j\circ j)] \ar[r]_-{\varrho_{a,j}} &
a}
\end{equation}

By one of the triangle identities of the adjunction $m\dashv m^*$, the diagram 
\begin{equation} \label{eq:eta-eps}
\xymatrix{
% 1.1
\underline{m.u1}.u^*1 \ar[rrr]^-{1.\varepsilon_u1} \ar[dd]_-\cong  
\ar[rd]^-{1.1.1.\eta_m} &&&
% 1.4
m \ar[ld]_-{1.\eta_m} \ar@{=}[dd] \\
% 2.2
& \underline{m.u1}.u^*1.m^*.m \ar[r]^-{1.\varepsilon_u1.1.1} \ar[d]^-\cong
\ar@{}[rd]|-{\eqref{eq:lcounit*}} &
% 2.3
m.m^*.m \ar[rd]_-{\varepsilon_m.1} \\
% 3.1
u^*1 \ar[r]_-{1.\eta_m} &
% 3.2
\underline{u^*1.m^*}.m \ar[rr]_-\cong &&
% 3.4
m}
\end{equation}
commutes. Using it together with one of Mac Lane's coherence triangles for the monoidale $(M,m,u)$, we infer the commutativity of the following diagram.
\begin{equation} \label{eq:eta_id}
\xymatrix@C=6pt{
% 1.1
m.\underline{u^*11.m^*1}.m1 \ar[ddd]_-\cong \ar@{}[rd]|-{\eqref{eq:eta-eps}} &
% 1.2
m.u^*11 \ar[l]_-{1.1.\eta_m1} \ar@{<-}[dd]^-\cong \ar@{=}[r]
\ar@{}[rd]|-{\textrm{(Mac Lane)}} &
% 1.3
m.u^*11 \ar[r]^-\cong \ar@{<-}[d]_-\cong &
% 1.4
u^*1.1m \ar@{<-}[d]^-\cong  \ar[r]^-{1.\eta_m.1} \ar@{}[rd]|-{\eqref{eq:eta-eps}} &
% 1.5
\underline{u^*1.m^*}.m.m1 \ar[ddd]^-\cong \\
% 2.3
&& \overline{m.u1}.\underline{m.u^*11} \ar[r]^-\cong \ar@{<-}[d]_-\cong &
% 2.4
\overline{m.u1}.u^*1.1m \ar[rdd]^-{1.\varepsilon_u1.1} & \\
% 3.2
& \underline{m.\overline{m1}}\overline{.u11}.u^*11 \ar[r]^-\cong \ar[ld]_-{1.1. \varepsilon_u 11} &
% 3.3
m.\overline{1m.u11}.u^*11 \ar[rrd]^-{1.1.\varepsilon_u11} \\
% 4.1
m.m1 \ar[rrrr]_-\cong &&&&
% 4.5
m.1m}
\end{equation}
From that easily follows the commutativity of the first diagram of Figure \ref{fig:Lem.5}, for all 1-cells $a,b,c:M\to M$. 

Again by one of Mac Lane's coherence triangles and one of the triangle identities of the adjunction $m\dashv m^*$, also the second diagram of Figure \ref{fig:Lem.5} commutes, for all 1-cells $a,b,c:M\to M$. 

\begin{amssidewaysfigure}
\thisfloatpagestyle{empty}
\begin{adjustwidth}{-60pt}{0pt}
\centering

\scalebox{.9}{$
\xymatrix@C=10pt@R=18pt{
&\\
&\\
&\\
% -1.1
m.1c.b1.\underline{m1.u11}.u^*11.a11.m^*1.m^* \ar[d]_-\cong \\
% 0.1
m.1c.b1.u^*11.a11.\underline{m^*1.m^*} \ar[d]_-\cong  \\
% 1.1
m.\underline{1c.b1.u^*11}.\underline{a11.1m^*}.m^* \ar[d]_-\cong \\
% 2.1
\underline{m.u^*11}.11c.1b1.1m^*.a1.m^* \ar[rr]^-{1.1.\eta_m1.1.1.1.1.1} \ar[d]_-\cong  
\ar@{}[rrrd]|-{\eqref{eq:eta_id}} &&
% 2.2
m.\underline{u^*11.m^*1}.m1.11c.1b1.1m^*.a1.m^* \ar[r]^-\cong &
% 2.3
\underline{m.m1}.11c.1b1.1m^*.a1.m^* \ar[d]^-\cong 
\\
% 3.1
 u^*1.1m.11c.1b1.1m^*.a1.m^* \ar[rr]_-{1.\eta_m.1.1.1.1.1.1} &&
% 3.2
\underline{u^*1.m^*}.m.1m.11c.1b1.1m^*.a1.m^* \ar[r]_-\cong &
% 3.3
m.1m.11c.1b1.1m^*.a1.m^*\\
&&& \\
% 1.1
m.1c.b1.\underline{m1.u11}.u^*11.a11.m^*1.m^*
\ar[dd]_-\cong \ar[r]^-{\raisebox{8pt}{${}_{1.1.1.\eta_m.1.1.1.1.1.1}$}} &
% 1.2
m.1c.b1.m^*.\overline{m.\underline{m1}}\underline{.u11}.u^*11.a11.m^*1.m^* \ar[r]^-\cong 
\ar@{}[rd]|-{\textrm{(Mac Lane)}} \ar[dd]^-\cong &
% 1.3
m.1c.b1.m^*.m.\underline{1m.u11}.u^*11.a11.\overline{m^*1.m^*} \ar[r]^-\cong  \ar[d]^-\cong &
% 1.4
m.1c.b1.m^*.m.\underline{1m.u11.u^*11.a11}.1m^*.m^* \ar[d]^-\cong \\
% 2.3
&& m.1c.b1.m^*.\underline{m.u1}.m.u^*11.a11.m^*1.m^* \ar[d]^-\cong &
% 2.4
m.1c.b1.m^*.m.u1.u^*1.a1.1m.1m^*.m^* \ar[dd]^-{1.1.1.1.1.1.1.1.1\varepsilon_m. 1} \\
%3.1
m.1c.b1.u^*11.a11.\underline{m^*1.m^*} \ar[d]_-\cong  \ar[r]^-{1.1.1.\eta_m.1.1.1.1}  &
% 3.2
m.1c.b1.m^*.m.u^*11.a11.m^*1.m^* \ar@{=}[r] &
% 3.3
m.1c.b1.m^*.m.u^*11.a11.\underline{m^*1.m^*} \ar[d]^-\cong  & \\
% 4.1
\underline{m.1c.b1.u^*11}.a11.1m^*.m^* \ar[d]_-\cong &&
% 4.3
\underline{m.1c.b1.m^*.m.u^*11}.a11.1m^*.m^* \ar[d]^-\cong &
% 4.4
m.1c.b1.m^*.\underline{m.u1}.u^*1.a1.m^* \ar[d]^-\cong \\
% 5.1
u^*1.1m.11c.1b1.\underline{a11.1m^*}.m^* \ar[d]_-\cong &&
% 5.3
u^*1.1m.11c.1b1.1m^*.1m.\underline{a11.1m^*}.m^* \ar[d]^-\cong &
% 5.4
\underline{m.1c.b1.m^*.u^*1}.a1.m^*  \ar[d]^-\cong  \\
% 6.1
u^*1.1m.11c.1b1.1m^*.a1.m^* \ar[rr]^-{1.1.1.1.1\eta_m.1.1.1} 
\ar@/_3pc/@{=}[rrr]^-{\raisebox{15pt}{${}_{\textrm{adjoint}}$}} &&
% 6.3
u^*1.1m.11c.1b1.1m^*.1m.1m^*.a1.m^* \ar[r]^-{\raisebox{8pt}{${}_{1.1.1.1.1.1\varepsilon_m.1.1}$}} &
% 6.4
u^*1.1m.11c.1b1.1m^*.a1.m^* \ar[d]^-{1.\eta_m.1.1.1.1.1.1}  \\
% 7.4
&&& \underline{u^*1.m^*}.m.1m.11c.1b1.1m^*.a1.m^* \ar[d]^-\cong \\
% 8.4
 &&& m.1m.11c.1b1.1m^*.a1.m^*}$}
\caption{Proof of \eqref{eq:Lem.5}}
\label{fig:Lem.5}
\end{adjustwidth}
\end{amssidewaysfigure}

Now the lower paths of both diagrams of Figure \ref{fig:Lem.5} are the same. So we infer the commutativity of the following diagram, whose lower path is equal to the upper path of the first diagram of Figure \ref{fig:Lem.5}, and whose upper path is equal to the upper path of the second diagram of Figure \ref{fig:Lem.5}:
\begin{equation} \label{eq:Lem.5}
\xymatrix{
([(a\circ j)\bullet i] \circ b) \bullet c \ar[r]^-\xi \ar[rrd]_-{\lambda_{a,b} \bullet 1} &
[(a\circ j) \bullet i \bullet i] \circ (b\bullet c) \ar[r]^-{(1\bullet \xi_0) \circ 1} &
[(a\circ j) \bullet i ] \circ (b\bullet c) \ar[d]^-{\lambda_{a,b\bullet c}} \\
&& a\bullet b \bullet c.}
\end{equation}

A similar computation using 
\eqref{eq:eta-eps}, the triangle conditions on the adjunctions $m\dashv m^*$ and $u \dashv u^*$ together with Mac Lane's coherence triangles yields the commutativity of 
\begin{equation} \label{eq:Lem1}
\xymatrix@C=40pt{
([(a\circ j) \bullet i] \circ j) \bullet i \ar[r]^-{(1\circ \xi^0) \bullet 1} \ar[d]_-\xi &
([(a\circ j) \bullet i] \circ j \circ j) \bullet i  \ar[d]^-{(\lambda_{a,j} \circ 1)\bullet 1} \\
(a\circ j) \bullet i \bullet i \ar[r]_-{1\bullet \xi_0} &
(a \circ j) \bullet i
}
\end{equation}
for any 1-cell $a:M \to M$.

\subsection{Additional structure for naturally Frobenius map monoidales}
A map monoidale $(M,m,u)$ in a monoidal bicategory $\mathcal B$ is said to be {\em naturally Frobenius} if both $\pi$ and $\pi'$, defined as the respective 2-cells
$$
\xymatrix@R=10pt@C=20pt{
& MMM  \ar[r]^-{m1} \ar[dd]^-{1m} \ar@{}[rdd]|-\cong &
MM \ar[dd]_-m \ar@/^.6pc/@{=}[rd] \\
MM \ar@/_.6pc/@{=}[rd] \ar@/^.6pc/[ru]^-{1m^*} \ar@{}[r]|-{\Downarrow 1 \varepsilon_m}&&
\ar@{}[r]|-{\Downarrow \eta_m}&
MM \\
& MM \ar[r]_-m &
M \ar@/_.6pc/[ru]_-{m^*}}
\qquad
\xymatrix@R=10pt@C=20pt{
& MMM  \ar[r]^-{1m} \ar[dd]^-{m1} \ar@{}[rdd]|-\cong &
MM \ar[dd]_-m \ar@/^.6pc/@{=}[rd] \\
MM \ar@/_.6pc/@{=}[rd] \ar@/^.6pc/[ru]^-{m^*1} \ar@{}[r]|-{\Downarrow \varepsilon_m 1}&&
\ar@{}[r]|-{\Downarrow \eta_m}&
MM, \\
& MM \ar[r]_-m &
M \ar@/_.6pc/[ru]_-{m^*}}
$$
are invertible. Then $M$ is a self-dual object of $\mathcal B$, with unit $m^*.u:I \to MM$ and counit $u^*.m:MM\to I$. So any morphism $b:M\to M$ has a mate 
$$
b^-:=(
\xymatrix{
M \ar[r]^-{1u} &
MM \ar[r]^-{1m^*} &
MMM \ar[r]^-{1b1} &
MMM \ar[r]^-{m1} &
MM \ar[r]^-{u^*1} &
M}).
$$

For any 1-cells $b,c:M \to M$, in \cite[Section 4.5]{BohmLack} 2-cells
$$
\varphi_{b,c}:b \circ c^- \to [(b\bullet c) \circ j] \bullet i
\qquad \textrm{and} \qquad
\psi_{b,c}: b^- \circ c \to i \bullet [j \circ (b\bullet c)],
$$
natural in $b$ and $c$,
were introduced; and \cite[Lemma 4.2]{BohmLack} was proven about their compatibility with $\bullet$. 
Together with the 2-cells $\lambda$ and $\varrho$ of the previous section, they render commutative (modulo the omitted monoidal coherence isomorphisms) the diagram
\begin{equation} \label{eq:Lem.7}
\xymatrix{
a\circ b^- \circ c \ar[r]^-{\varphi_{a,b} \circ 1} \ar[d]_-{1\circ \psi_{b,c}} &
([(a\bullet b) \circ j] \bullet i) \circ c \ar[d]^-{\lambda_{a\bullet b,c}} \\
a\circ (i\bullet [j\circ (b\bullet c)]) \ar[r]_-{\varrho_{a,b\bullet c}} &
a\bullet b \bullet c}
\end{equation}
for any 1-cells $a,b,c:M\to M$. This is immediate by the explicit form of the occurring 2-cells.

In \cite[Lemma 4.3]{BohmLack} a further 2-cell 
$\vartheta_{f,g,h}:f\circ [(g\circ j) \bullet i] \circ h^-\to [(f\bullet h) \circ g \circ j] \bullet i$
was introduced naturally in any 1-cells $f,g,h:M\to M$. 

We will omit the subscripts of all morphisms $\varphi,\psi,\lambda,\varrho,\vartheta$ --- referring to the involved objects of $\mathcal B(M,M)$ --- if it may cause no confusion.

\begin{remark} \label{rem:rev}
Let us recall from \cite[Section 4.3]{BohmLack} the following duality.
If $(M,m,u)$ is a naturally Frobenius map monoidale in a monoidal bicategory $\mathcal B$, then $(M,m^*,u^*)$ is a naturally Frobenius map monoidale in the monoidal bicategory $\mathcal B^{\mathsf{op},\mathsf{rev}}$ obtained from $\mathcal B$ by formally reversing the 1-cells and taking the reversed monoidal product. Repeating the construction of Section \ref{sec:B(M,M)} with the naturally Frobenius map monoidale $(M,m^*,u^*)$ in $\mathcal B^{\mathsf{op},\mathsf{rev}}$, we arrive at the same category $\mathcal B^{\mathsf{op},\mathsf{rev}}(M,M)=\mathcal B(M,M)$ with the duoidal structure provided by the reversed products $\circ\rev$ and $\bullet\rev$.
As explained in \cite[Section 4.3]{BohmLack}, $b\mapsto b^-$
is the object map of a strong duoidal equivalence between the duoidal category $\mathcal B(M,M)$ of Section \ref{sec:B(M,M)} and the same category with the reversed monoidal structures.

In these duoidal  categories $(\mathcal B(M,M),\bullet,\circ)$ and $(\mathcal B(M,M),\bullet\rev,\circ \rev)$, the roles of the morphisms $\lambda$ and $\varrho$, as well as the roles of $\varphi$ and $\psi$ are pairwise interchanged, while $\vartheta$ has a symmetric counterpart $\kappa_{f,g,h}:f^-\circ [i\bullet(j\circ g)] \circ h \to i \bullet [j\circ g \circ (f\bullet h)]$ for all 1-cells $f,g,h:M\to M$. 

Recall a further duality of duoidal categories. Interchanging the roles of the $\circ$- and $\bullet$-monoidal structures we obtain a duoidal structure on the opposite category. In this way, for a (not necessarily naturally Frobenius) map monoidale $M$ in a monoidal bicategory $\mathcal B$, also $(\mathcal B(M,M)^{\mathsf{op}},\circ,\bullet)$ is a duoidal category. However, this latter one {\em does not belong to the class} described in Section \ref{sec:B(M,M)}. Therefore the results proved in $(\mathcal B(M,M),\bullet,\circ)$, allow for no straightforward dualization to $(\mathcal B(M,M)^{\mathsf{op}},\circ,\bullet)$.
\end{remark}

Using that $\xi^0: j\to j\circ j$ is induced by the unit $\eta^u$ of the adjunction $u\dashv u^*$ (see Section \ref{sec:B(M,M)}), we see that for any 1-cell $a:M\to M$,
a morphism $\theta:j\circ a\to j$ is left $j$-colinear; that is, 
$$
\xymatrix{
j\circ a \ar[r]^-\theta \ar[d]_-{\xi^0 \circ 1} &
j \ar[d]^-{\xi^0} \\
j\circ j\circ a \ar[r]_-{1\circ \theta} &
j\circ j}
$$
commutes, if and only if $\theta$ is equal to 
$$
\xymatrix@R=10pt{
% 1.5
&&&& M \ar@/^.8pc/[rrdd]^-{a} \\
% 2.1
M \ar[r]^-{u^*} &
% 2.2
I \ar@{=}[rr] \ar@/_.4pc/[rd]_(.3)u &
% 2.3
\ar@{}[d]|-{\Downarrow {\eta_u}} &
% 2.4
I \ar@/^.4pc/[ru]^-u \ar@{}[rr]|-{\Downarrow {\theta}} &&
% 2.6
 \\
% 3.3
&& M \ar[rr]^(.7){u^*} \ar@/_.4pc/[ru]_(.7)*-<4pt>{_{u^*}} 
\ar@{=}@/_2pc/[rrrr] &&
% 3.5
I \ar[rr]^-u \ar@{}[d]|(.65){\Downarrow {\varepsilon_u}} &&
% 3.7
M.\\
&&&&}
$$
By the explicit form of $\kappa$ in Remark \ref{rem:rev} (see the dual of $\vartheta$ in \cite[Lemma 4.3]{BohmLack}), a morphism $\theta$ of this form renders commutative

\begin{equation} \label{eq:step4}
\xymatrix{
f^- \circ [i\bullet (j\circ a)] \circ h \ar[r]^-{\kappa_{f,a,h}} \ar[d]_-{1\circ (1\bullet \theta) \circ 1} &
i\bullet [j\circ a \circ (f\bullet h)] \ar[d]^-{1\bullet (\theta \circ 1)} \\
f^-\circ h \ar[r]_-{\kappa_{f,i,h} = \psi_{f,h}} &
 i\bullet [j\circ (f\bullet h)] }
\end{equation}
for any 1-cells $f,h:M\to M$. 
(In the horizontal composite 2-cells of the columns of the diagram of \eqref{eq:step4}, only those components are non-identity 2-cells which meet identity 2-cells of the 2-cells of the rows, when they are vertically composed.)
The equality of the morphisms in the bottom row of \eqref{eq:step4} follows by the dual version of \cite[Lemma 4.3~(ii)]{BohmLack}.

For any objects $b$ and $c$, and any morphism of right $i$-modules $\varpi:i\to [(b \bullet c)\circ j] \bullet i$ in $\mathcal B(M,M)$ --- that is, such that
\begin{equation} \label{eq:ilin}
\xymatrix{
i\bullet i \ar[r]^-{\varpi \bullet 1} \ar[d]_-{\xi_0} &
[(b \bullet c)\circ j] \bullet i \bullet i \ar[d]^-{1\bullet \xi_0} \\
i\ar[r]_-\varpi & 
[(b \bullet c)\circ j] \bullet i}
\end{equation}
commutes --- also the following diagram commutes.
\begin{equation}\label{eq:Fig2}
\xymatrix@C=30pt{
% 1.1
i  \ar@{=}[d] \ar@{=}[r] \ar@/_4.5pc/@{=}[dddd] &
% 1.2
(i\circ j) \bullet i \ar[r]^-{(1\circ \xi^0) \bullet 1} &
% 1.3
(i\circ j\circ j) \bullet i \ar[d]^-{(\varpi \circ 1 \circ 1) \bullet 1} \\
% 2.1
(i\circ j) \bullet (i\circ i) \ar[r]^-{(\varpi \circ 1) \bullet 1} \ar[d]^-\xi &
% 2.2
[([(b \bullet c)\circ j] \bullet i) \circ j] \bullet (i \circ i) \ar[d]^-\xi \ar[r]^-{(1\circ \xi^0) \bullet 1} &
% 2.3
[([(b \bullet c)\circ j] \bullet i) \circ j \circ j] \bullet i \ar[ddd]^-{(\lambda \circ 1) \bullet 1} \\
% 3.1
(i\bullet i) \circ (j\bullet i) \ar@{=}[d] &
% 3.2
([(b \bullet c)\circ j] \bullet i \bullet i) \circ (j\bullet i) \ar@{=}[d] \ar@{}[r]|-{\eqref{eq:Lem1}} & \\
% 4.1
i \bullet  i \ar[r]^-{\varpi \bullet 1} \ar[d]^-{\xi_0}  \ar@{}[rd]|-{\eqref{eq:ilin}} &
% 4.2
[(b \bullet c)\circ j] \bullet i \bullet i \ar[rd]^-{1\bullet \xi_0} \\
% 5.1
i \ar[rr]_-{\varpi} &&
% 5.3
[(b \bullet c)\circ j] \bullet i }
\end{equation}
With this identity at hand, we see that for any right $i$-module morphism $\omega:i \to (b\circ c^-) \bullet i$, and the associated morphism $\Omega$ of
\begin{equation}  \label{eq:Omega}
\xymatrix@C=23pt{
j \ar[r]^-{\omega \circ 1} &
[(b\circ c^-) \bullet i] \circ j \ar[r]^-{(\varphi \bullet 1) \circ 1} &
([(b \bullet c)\circ j] \bullet i \bullet i) \circ j \ar[r]^-{(1\bullet \xi_0) \circ 1} &
([(b \bullet c)\circ j] \bullet i) \circ j  \ar[r]^-\lambda &
b\bullet c}
\end{equation}
the diagram of Figure \ref{fig:Lem3} commutes. The region marked by \eqref{eq:Fig2} commutes by the application of \eqref{eq:Fig2} to the following particular right $i$-module morphism as $\varpi$:
$$
\xymatrix{
i\ar[r]^-{\omega} &
(b\circ c^-) \bullet i\ar[r]^-{\varphi \bullet 1} &
[(b \bullet c)\circ j] \bullet i \bullet i \ar[r]^-{1\bullet \xi_0} &
[(b \bullet c)\circ j] \bullet i.}
$$
\begin{amssidewaysfigure}
\thisfloatpagestyle{empty}
{\color{white} .}
\hspace*{-2cm}
\vspace{2cm}
\centering
\scalebox{1}{$
\xymatrix@C=10pt@R=25pt{
% 1.1
(a\circ d^-) \bullet i\ar[rr]_-{1\bullet \xi^0 \bullet 1} \ar@/^1.5pc/@{=}[rrr]\ar[dd]_-{1\bullet \omega} 
\ar@{}[rrdd]|-{\eqref{eq:Fig2}} &&
% 1.3
(a\circ d^-) \bullet (j\circ j) \bullet i \ar[r]_-{\xi \bullet 1} \ar[dd]^-{1\bullet (\Omega \circ 1) \bullet 1} &
% 1.4
(a\circ d^-) \bullet i  \ar[r]^-{\varphi \bullet 1} \ar[ddd]^-{[(1\bullet \Omega) \circ 1] \bullet 1} &
% 1.5
[(a\bullet d) \circ j] \bullet i \bullet i \ar[r]^-{1\bullet \xi_0} 
\ar[dddd]^-{[(1\bullet \Omega \bullet 1) \circ 1] \bullet 1 \bullet 1} &
[(a\bullet d) \circ j] \bullet i  \ar[ddddd]^-{[(1\bullet \Omega \bullet 1) \circ 1] \bullet 1} \\
\\
% 3.1
(a\circ d^-) \bullet (b\circ c^-) \bullet i \ar[d]_-{\xi \bullet 1} 
\ar[r]^-{\raisebox{8pt}{${}_{1\bullet \varphi \bullet 1}$}} 
\ar@{}[rddd]|-{\mbox{\scriptsize{\cite[Lemma 4.2]{BohmLack}}}} &
% 3.2
(a\circ d^-) \bullet [(b\bullet c) \circ j] \bullet i \bullet i \ar[d]^-{\xi \bullet 1 \bullet 1} 
\ar[r]^-{\raisebox{8pt}{${}_{1\bullet 1 \bullet \xi_0}$}} &
% 3.3
(a\circ d^-) \bullet [(b\bullet c) \circ j] \bullet i  \ar[rd]^-{\xi \bullet 1} \\
% 4.1
[(a\bullet b) \circ (d^- \bullet c^-)] \bullet i \ar[dd]_-\cong &
% 4.2
[(a\bullet b \bullet c) \circ d^-] \bullet i \bullet i \ar[d]^-{\varphi \bullet 1 \bullet 1} &&
% 4.4
[(a\bullet b \bullet c) \circ d^-] \bullet i  \ar[rd]^-{\varphi \bullet 1} \\
% 5.2
& [(a\bullet b \bullet c \bullet d) \circ j] \bullet i  \bullet i  \bullet i  \ar[rrr]^-{1\bullet 1\bullet \xi_0} 
\ar[d]^-{1 \bullet \xi_0 \bullet 1} &&&
% 5.5
[(a\bullet b \bullet c \bullet d) \circ j] \bullet i  \bullet i \ar[rd]^-{1\bullet \xi_0} \\
% 6.1
[(a\bullet b) \circ (c \bullet d)^-] \bullet i \ar[r]_-{\varphi \bullet 1} &
% 6.2
[(a\bullet b \bullet c \bullet d) \circ j] \bullet i  \bullet i \ar[rrrr]_-{1\bullet \xi_0} &&&&
% 6.6
[(a\bullet b \bullet c \bullet d) \circ j] \bullet i}$}
\caption{Properties of $\Omega$ of \eqref{eq:Omega}}
\label{fig:Lem3}
\end{amssidewaysfigure}

\subsection{Hopf comonads}
As observed in \cite{Street:Belgian} and \cite[Section 3.3]{BohmLack}, a monoidal comonad $a$ on a map monoidale $M$ in a monoidal bicategory $\mathcal B$ can equivalently be seen as a bimonoid --- in the sense of \cite[Definition 6.25]{AguiarMahajan} --- in the duoidal category $\mathcal B(M,M)$ of Section \ref{sec:B(M,M)}. We denote by $\mu:a \bullet a \to a$ and $\eta: j\to a$ its $\bullet$-monoid structure and by $\delta:a \to a \circ a$ and $\varepsilon:a \to i$ its $\circ$-comonoid structure.

By \cite[Theorem 7.2]{BohmLack} a monoidal comonad $a$ on a naturally Frobenius map monoidale $M$ is a Hopf comonad (a right Hopf comonad in the terminology of \cite{ChikhLackStreet}) if and only if there is a 2-cell $\sigma: a \to a^-$ --- the so-called {\em antipode} --- rendering commutative the diagrams of \cite[Theorem 7.2]{BohmLack}. In this case we term $a$ --- with the $\bullet$-monoid structure $(\mu,\eta)$, $\circ$-comonoid structure $(\delta,\varepsilon)$, and the antipode $\sigma:a \to a^-$ --- a {\em Hopf monoid} in $\mathcal B(M,M)$. For a Hopf monoid $a$, also the following diagram commutes.
\begin{equation} \label{eq:step3}
\xymatrix{
% 1.1
a \ar[r]^-\delta \ar[d]^-\delta \ar@/_1.7pc/@{=}[ddd] &
% 1.2
a\circ a \ar[r]^-{1\circ \sigma} \ar[d]^-{1\circ \delta} \ar@{}[rrd]|-{\mbox{\scriptsize \cite[Theorem 7.5]{BohmLack}}} &
% 1.3
a \circ a^- \ar[r]^-{1\circ \delta^-} &
% 1.4
a \circ (a\circ a)^-\ar[d]^-\cong \\
% 2.1
a\circ a \ar[r]^-{\delta \circ  1} \ar[dd]^-{\varepsilon \circ  1} \ar@{}[rrdd]|-{\mbox{\scriptsize \cite[Theorem 7.2]{BohmLack}}} &
% 2.2
a \circ a \circ a \ar[r]^-{1\circ \sigma \circ 1} &
% 2.3
a \circ a^- \circ a \ar[r]^-{ 1 \circ 1 \circ \sigma} \ar[d]^-{\varphi \circ 1} &
% 2.4
a \circ a^- \circ a^-  \ar[d]^-{\varphi \circ 1}  \\
% 3.3
&& ([(a\bullet a)\circ j]\bullet i) \circ  a \ar[d]^-{[( \mu \circ 1)\bullet 1]\circ 1} &
% 3.4
([(a\bullet a)\circ j]\bullet i) \circ  a^- \ar[dd]^-{[( \mu \circ 1)\bullet 1]\circ 1} \\
% 4.1
a \ar[r]^-{(\xi^0 \bullet  1)\circ 1} \ar[d]_-\sigma &
% 4.2
[(j\circ j)\bullet i]  \circ a \ar[r]^-{[(\eta \circ 1)\bullet 1]\circ 1} &
% 4.3
[(a\circ j)\bullet i]  \circ a \ar[rd]^-{1 \circ \sigma} \\
% 5.1
a^- \ar[r]_-{(\xi^0 \bullet  1)\circ 1} &
% 5.2
[(j\circ j)\bullet i]  \circ a^- \ar[rr]_-{[(\eta \circ 1)\bullet 1]\circ 1} &&
% 5.4
[(a\circ j)\bullet i]  \circ a^-}
\end{equation}

\begin{remark} \label{rem:rev_Hopf}
A monoidal comonad $a$ on a naturally Frobenius map monoidale $(M,\! m,\! u)$ in a monoidal bicategory $\mathcal B$ can be seen, equivalently, as a monoidal comonad on the naturally Frobenius map monoidale $(M,m^*,u^*)$ of Remark \ref{rem:rev} in $\mathcal B^{\mathsf{op},\mathsf{rev}}$. The comultiplications and the counits are the same while the monoidal structures are mates under the adjunctions $m\dashv m^*$ and $u\dashv u^*$ in $\mathcal B$. Hence it can equivalently be seen as a bimonoid in either one of the duoidal categories $(\mathcal B(M,M),\bullet,\circ)$ and $(\mathcal B(M,M),\bullet\rev,\circ\rev)$ of Remark \ref{rem:rev}.

The diagrams of \cite[Theorem 7.2]{BohmLack} in these duoidal categories take the same form (only their roles are interchanged). Thus they commute in $(\mathcal B(M,M),\circ,\bullet)$ if and only if they commute in $(\mathcal B(M,M),\circ\rev,\bullet\rev)$.
That is to say, $a$ is a Hopf monoid in $(\mathcal B(M,M),\circ,\bullet)$ if and only if it is a Hopf monoid in $(\mathcal B(M,M),\circ\rev,\bullet\rev)$ via the same structure morphisms.
\end{remark}

%%%%%%%%%%%%%%%%%%    SEC  2  %%%%%%%%%%%%%

\section{Separability of a Hopf monoid} \label{sec:Maschke}

Recall that a monoid $(a,\mu,\eta)$ in a monoidal category $(\mathsf C, \bullet,j)$ is said to be {\em separable} if there is a morphism $\nabla: a\to a \bullet a$ rendering commutative the following diagrams. 
$$
\xymatrix@R=15pt{
a\bullet a \ar[rr]^-{1\bullet \nabla} \ar[dd]_-{\nabla\bullet 1} \ar[rd]_-\mu &&
a\bullet a\bullet a \ar[dd]^-{\mu \bullet 1} \\
& a \ar[rd]^-\nabla \\
a\bullet a\bullet a \ar[rr]_-{1 \bullet \mu} &&
a\bullet a}
\qquad \qquad
\xymatrix@R=42pt{
a \ar[r]^-\nabla \ar@{=}[rd] &
a\bullet a \ar[d]^-\mu \\
& a}
$$
That is, $\nabla$ is an $a$-bimodule section of $\mu$.
Although such a section $\nabla$ is not unique, in our considerations only its existence plays any role.

The aim of this section is to find sufficient and necessary conditions for the separability of the constituent monoid of a Hopf monoid.

\begin{definition} \label{def:integral}
By a {\em left integral} for a bimonoid $(a,(\mu,\eta),(\delta,\varepsilon))$ in a duoidal category $(\mathsf D,\bullet,\circ)$ we mean a left $a$-module and right $i$-module morphism $\theta:i \to a\bullet i$. That is, a morphism $\theta$ making the first two diagrams of
\begin{equation} \label{eq:int}
\xymatrix{
a \bullet i \ar[r]^-{1\bullet \theta} \ar[d]_-{\varepsilon \bullet 1} &
a\bullet a \bullet i \ar[dd]^-{\mu \bullet 1} \\
i\bullet i \ar[d]_-{\xi_0} \\
i \ar[r]_-\theta & a\bullet i }
\qquad \qquad
\xymatrix@R=63pt{
i\bullet i \ar[r]^-{\theta \bullet 1} \ar[d]_-{\xi_0} &
a \bullet i \bullet i \ar[d]^-{1\bullet \xi_0} \\
i \ar[r]_-\theta &
a\bullet i}
\qquad \qquad
\xymatrix{
i \ar[r]^-\theta \ar@{=}[rdd] &
a\bullet i \ar[d]^-{\varepsilon \bullet 1} \\
& i\bullet i \ar[d]^-{\xi_0}  \\
& i}
\end{equation}
commute. A left integral $\theta$ is said to be {\em normalized} if also the third diagram of \eqref{eq:int} commutes.

Dually, a {\em right integral} for a bimonoid $a$ in $(\mathsf D,\bullet,\circ)$ is a left integral for $a$ regarded as a bimonoid in $(\mathsf D,\bullet\rev,\circ\rev)$. That is, a right $a$-module and left $i$-module morphism $\theta:i \to i \bullet a$. A right integral is {\em normalized} if it is normalized as a left integral; that is, it is a section of the $a$-action
$\xymatrix@C=15pt{
i\bullet a \ar[r]^-{1\bullet \varepsilon} & i \bullet i \ar[r]^-{\xi_0} & i}$.
\end{definition}

\begin{lemma} \label{lem:Omega}
Consider a naturally Frobenius map monoidale $M$ in a monoidal bicategory $\mathcal B$.
For a Hopf monoid 
$(a,(\mu,\eta),(\delta,\varepsilon),\sigma)$ 
in the duoidal category $\mathcal B(M,M)$ of Section \ref{sec:B(M,M)}, and any morphism 
$\xymatrix@C=12pt{i \ar[r]^-\theta & a\bullet i}$, 
take
\begin{equation} \label{eq:omega}
\omega:=(\xymatrix{
i \ar[r]^-\theta &
a\bullet i \ar[r]^-{\delta \bullet 1} &
(a \circ a) \bullet i \ar[r]^-{(1\circ \sigma)\bullet 1} &
(a \circ a^- )\bullet i}).
\end{equation}
For the corresponding morphism $\Omega$ of \eqref{eq:Omega} the following assertions hold.
\begin{itemize}
\item[{(1)}] $\Omega$ renders commutative the diagram of Figure \ref{fig:Lem4}.
\item[{(2)}] If $\theta$ renders commutative the rightmost diagram of \eqref{eq:int} then $\mu.\Omega=\eta$.
\item[{(3)}] If $\theta$ is a left integral, then $\Omega$ renders commutative the diagram of Figure \ref{fig:Lem5}.
\end{itemize}
\end{lemma}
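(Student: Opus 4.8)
The plan is to treat all three claims as pasting‑diagram identities in the Gray monoid $\mathcal B$, reading the $\Omega$ of \eqref{eq:Omega} as the bicategorical avatar of the ``twisted comultiplication'' $\theta_{(1)}\otimes\sigma(\theta_{(2)})$: the composite $\omega$ of \eqref{eq:omega} comultiplies the output of $\theta$ and applies the antipode to the second leg, while the subsequent $\varphi$, $\xi_0$ and $\lambda$ merely transport the result across the self‑duality of $M$ into $a\bullet a$. With this reading each part becomes a familiar Hopf‑algebraic manipulation, and the real labour is certifying that the omitted coherence isomorphisms and the duality $2$‑cells $\varphi,\psi,\lambda,\varrho,\xi$ assemble correctly. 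For part (1) I note that \emph{no} hypothesis is placed on $\theta$, so the target identity of Figure \ref{fig:Lem4} must hold at the level of the raw structure maps $\delta,\sigma,\mu,\eta,\varepsilon$ alone. I would unfold both sides according to \eqref{eq:Omega} and reduce the claim to the antipode/comultiplication compatibility already recorded in \eqref{eq:step3}, together with coassociativity of $\delta$ and naturality of $\varphi$ and $\lambda$; the coherence identities \eqref{eq:Lem.5} and \eqref{eq:Lem.7}, which relate $\lambda,\varrho,\varphi$ to the interchange $\xi$, are exactly what is needed to slide the duality $2$‑cells past the operations, and no module condition enters.

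For part (2) I would substitute the definition of $\omega$ into $\Omega$ and postcompose with $\mu$. The antipode axiom of \cite[Theorem 7.2]{BohmLack}, in the $\varphi$--$\lambda$ presentation appearing in the lower half of \eqref{eq:step3}, identifies the composite $a\xrightarrow{\delta}a\circ a\xrightarrow{1\circ\sigma}a\circ a^-$ followed by $\varphi$, by $\mu$ and by $\lambda$ with the composite routed through $\varepsilon$ and $\eta$. Feeding $\theta$ into this equality and then invoking the normalization hypothesis --- the rightmost diagram of \eqref{eq:int}, i.e. $\xi_0.(\varepsilon\bullet 1).\theta=\mathrm{id}_i$ --- collapses the $\varepsilon$‑branch and leaves $\mu.\Omega=\eta$.

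For part (3) the extra input is that $\theta$ is a genuine left integral, so in particular it is a right $i$‑module morphism (the middle diagram of \eqref{eq:int}). This is precisely what makes the $\omega$ of \eqref{eq:omega} a right $i$‑module morphism, hence the hypothesis under which Figure \ref{fig:Lem3} was established. I would therefore first apply Figure \ref{fig:Lem3} to this $\omega$, and then use the left $a$‑module property of $\theta$ (the first diagram of \eqref{eq:int}) to rewrite the leg carrying the $a$‑action, thereby closing the diagram of Figure \ref{fig:Lem5}. The compatibility of $\varphi,\psi$ with $\bullet$ from \cite[Lemma 4.2]{BohmLack} and the colinearity identity \eqref{eq:step4} are the tools that commute the module action through the duality $2$‑cells.

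I expect the main obstacle to be not any single part but the cumulative bookkeeping of the duality $2$‑cells alongside the suppressed coherence isomorphisms. The delicate point in part (2) is to verify that the antipode axiom of \cite[Theorem 7.2]{BohmLack} is being applied in exactly the $\varphi$--$\lambda$--$\xi$ form demanded by \eqref{eq:Omega}; and in part (3) the corresponding difficulty is to route the $a$‑action through $\xi$ without disturbing the $i$‑module leg, which is where the naturality of $\lambda,\varrho,\varphi,\psi$ and the identities \eqref{eq:Lem.5} and \eqref{eq:Lem1} must be deployed in the correct order.
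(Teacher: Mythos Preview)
Your plan for part~(2) is on target and matches the paper: Figure~\ref{fig:normalized} invokes the antipode axiom of \cite[Theorem~7.2]{BohmLack} exactly in the $\varphi$--$\lambda$ form you describe, then collapses via the normalization hypothesis, with the coherence identity \eqref{eq:Lem.6} absorbing the residual $\xi^0$.

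For parts~(1) and~(3), however, you have the dependency structure inverted and several ingredients misidentified. In the paper, part~(1) is proved \emph{first}, and its two essential inputs (see Figure~\ref{fig:Lem4}) are Figure~\ref{fig:Lem3} together with the \emph{anti-multiplicativity} of the antipode from \cite[Theorem~7.5]{BohmLack} --- the compatibility of $\sigma$ with $\mu$, not with $\delta$. Part~(3) is then deduced \emph{from} part~(1): Figure~\ref{fig:Lem5} pastes the already-established Figure~\ref{fig:Lem4} onto the left $a$-module axiom for $\theta$ (the first diagram of \eqref{eq:int}) and one bimonoid axiom (the region $(\ast)$, compatibility of $\mu$ and $\delta$ through $\xi$). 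So Figure~\ref{fig:Lem3} feeds into part~(1), not part~(3). The identities you cite --- \eqref{eq:step3}, \eqref{eq:step4}, \eqref{eq:Lem.5}, \eqref{eq:Lem.7} --- play no role in this lemma: \eqref{eq:step3} and \eqref{eq:step4} belong to the dual argument of Section~\ref{sec:DualMaschke}, while \eqref{eq:Lem.5} and \eqref{eq:Lem.7} are reserved for Proposition~\ref{prop:Prop6}. Incidentally, since Figure~\ref{fig:Lem3} requires $\omega$ to be right $i$-linear, the paper's proof of~(1) tacitly uses the middle diagram of \eqref{eq:int}, despite the lemma's ``any morphism $\theta$''; your remark that~(1) ought to need no hypothesis on $\theta$ is sharper than what the paper's argument actually delivers.
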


\begin{proof}
Part (1) is proved by Figure \ref{fig:Lem4}, part (2) is proved by Figure \ref{fig:normalized} and part (3) is proved by Figure \ref{fig:Lem5} (whose region $(\ast)$ commutes by a bimonoid axiom).
\end{proof}

\begin{figure}[p]
\hspace*{-1.3cm}
\begin{minipage}{\textwidth+2cm}
\centering
\scalebox{1}{$
\xymatrix@C=10pt@R=25pt{
% 1.1
a\circ a \ar[r]^-{1\circ \sigma} \ar[d]_-{1\bullet \xi^0_0} &
% 1.2
a\circ a^- \ar[r]^-\varphi \ar[d]^-{1\bullet \xi^0_0} &
% 1.3
[(a\bullet a) \circ j] \bullet i \ar[d]_-{1\bullet 1\bullet \xi^0_0} \ar@{=}@/^1.2pc/[rd] \\
% 2.1
(a\circ a) \bullet i \ar[d]_-{1\bullet \theta} &
% 2.2
(a\circ a^-) \bullet i \ar[d]^-{1\bullet \theta} \ar[r]^-{\varphi \bullet 1} 
\ar@{}[rrddddd]|-{\textrm{Figure}~\ref{fig:Lem3}} \ar@/^5pc/[ddd]^-{1\bullet \omega}  &
% 2.3
[(a\bullet a) \circ j] \bullet i \bullet i  \ar[r]^-{1\bullet \xi_0} &
% 2.4
[(a\bullet a) \circ j] \bullet i  \ar[ddddd]^-{[(1\bullet \Omega \bullet 1) \circ 1] \bullet 1} \\
% 3.1
(a\circ a) \bullet a \bullet i \ar[d]_-{1\bullet \delta \bullet 1} &
% 4.2
(a\circ a^-) \bullet a \bullet i \ar[d]_-{1\bullet \delta \bullet 1}  \\
% 4.1
(a\circ a) \bullet (a\circ a) \bullet i \ar[dd]_-{\xi \bullet 1} &
% 4.2
(a\circ a^-) \bullet (a\circ a) \bullet i \ar[d]_-{1\bullet (1\circ \sigma) \bullet 1} \\
% 5.2
& (a\circ a^-) \bullet (a\circ a^-) \bullet i \ar[d]^-{\xi \bullet 1} \\
% 6.1
[(a\bullet a) \circ (a\bullet a)] \bullet i \ar[dd]_-{(\mu \circ \mu)\bullet 1} 
\ar[r]^-{\raisebox{8pt}{${}_{[1\circ (\sigma \bullet \sigma)] \bullet 1}$}} &
% 6.2
[(a\bullet a) \circ (a^-\bullet a^-)] \bullet i \ar[d]^-\cong \\
% 7.1
\ar@{}[rd]|-{\mbox{\tiny \cite[Theorem 7.5]{BohmLack}}} &
% 7.2
[(a\bullet a) \circ (a\bullet a)^-] \bullet i \ar[d]^-{(\mu \circ \mu^-)\bullet 1} 
\ar[r]^-{\varphi \bullet 1} &
% 7.3
[(a\bullet a \bullet a \bullet a) \circ j] \bullet i \bullet i \ar[r]^-{1\bullet \xi_0} &
% 7.4
[(a\bullet a \bullet a \bullet a) \circ j] \bullet i \ar[d]^-{[(\mu \bullet \mu)\circ 1]\bullet 1} \\
% 8.1
(a\circ a) \bullet i \ar[r]_-{(1\circ \sigma) \bullet 1} &
% 8.2
(a\circ a^-) \bullet i \ar[r]_-{\varphi \bullet 1} &
% 8.3
[(a\bullet a) \circ j] \bullet i \bullet i \ar[r]_-{1\bullet \xi_0} &
% 8.4
[(a\bullet a) \circ j] \bullet i}$ }
\caption{Proof of Lemma \ref{lem:Omega}~(1)}
\label{fig:Lem4}
\end{minipage}
\hspace*{-1cm}
\end{figure}
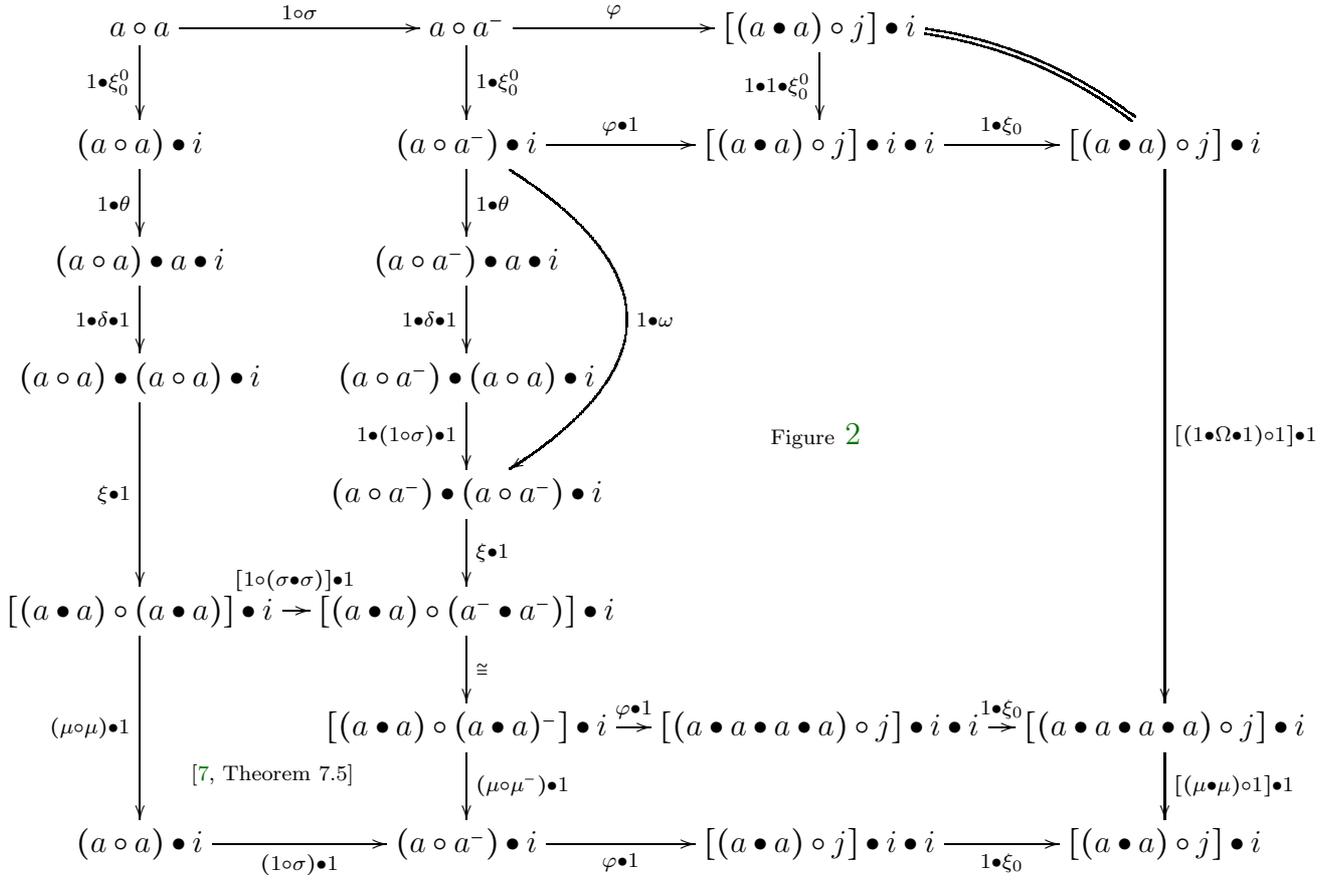

\begin{amssidewaysfigure}
\thisfloatpagestyle{empty}
{\color{white} .}
\hspace*{-2cm}
\vspace{2cm}
\centering
\scalebox{1}{$
\hspace*{1cm}
\xymatrix@C=30pt@R=25pt{
% 1.1
j \ar[r]_-{\theta \circ 1} \ar@/^4pc/[rrrrrr]^-\Omega \ar@{=}@/_2pc/[rdd] &
% 1.2
(a\bullet i) \circ j \ar[r]_-{(\delta \bullet 1) \circ 1} \ar[d]^-{(\varepsilon \bullet 1) \circ 1}
\ar@{}[rrrd]|-{\mbox{\tiny \cite[Theorem 7.2]{BohmLack}}} &
% 1.3
[(a\circ a)\bullet i] \circ j  \ar[r]_-{[(1\circ \sigma) \bullet 1] \circ 1} &
% 1.4
[(a\circ a^-)\bullet i] \circ j  \ar[r]^-{(\varphi \bullet 1) \circ 1} &
% 1.5
([(a\bullet a) \circ j] \bullet i \bullet i) \circ j \ar[r]^-{(1\bullet \xi_0) \circ 1}
\ar[d]^-{[(\mu \circ 1) \bullet 1 \bullet 1] \circ 1} &
% 1.6
([(a\bullet a) \circ j] \bullet i) \circ j \ar[r]_-\lambda &
% 1.7 
a\bullet a \ar[dd]^-\mu \\
% 2.2
& (i\bullet i) \circ j  \ar[r]^-{(\xi^0 \bullet 1 \bullet 1) \circ 1} \ar[d]^-{\xi_0 \circ 1} &
% 2.3
[(j \circ j)\bullet i\bullet i] \circ j \ar[d]^-{(1\bullet \xi_0) \circ 1} \ar[rr]^-{[(\eta \circ 1) \bullet 1 \bullet 1] \circ 1} &&
% 2.5
[(a \circ j)\bullet i\bullet i] \circ j \ar[r]^-{(1\bullet \xi_0) \circ 1} &
% 2.6
[(a \circ j)\bullet i] \circ j \ar[rd]^-\lambda \\
% 3.2
& j \ar[r]^-{(\xi^0 \bullet 1) \circ 1} \ar@{=}@/_2pc/[rrr] &
% 3.3
[(j\circ j) \bullet i] \circ j  \ar[rr]^-\lambda \ar@{}[rd]|(.4){\eqref{eq:Lem.6}} 
\ar[rrru]^-{[(\eta\circ 1) \bullet 1] \circ 1} &&
% 3.5
j \ar[rr]_-\eta &&
% 3.7
a\\
&&&&}$}
\caption{Proof of Lemma \ref{lem:Omega}~(2)}
\label{fig:normalized}
\end{amssidewaysfigure}

\begin{amssidewaysfigure}
\thisfloatpagestyle{empty}
{\color{white} .}
\hspace*{-2cm}
\vspace{2cm}
\centering
\scalebox{1}{$
\xymatrix@C=25pt@R=25pt{
% 1.2
& a \ar[rr]^-\delta \ar@/_1.2pc/[lddd]_-\varepsilon \ar[dd]^-{1\bullet \xi^0_0} &&
% 1.4
a\circ a \ar[r]^-{1\circ \sigma} \ar[d]^-{1\bullet \xi^0_0} &
% 1.5
a\circ a^- \ar[rr]^-\varphi &&
% 1.7
[(a \bullet a) \circ j] \bullet i \ar[dddd]^-{[(1\bullet \Omega \bullet 1) \circ 1] \bullet 1} \\
% 2.4
&&& (a\circ a) \bullet i \ar[d]^-{1\bullet \theta} 
\ar@{}[rrrddd]|-{\textrm{Figure}~\ref{fig:Lem4}} \\
% 3.2
& a \bullet i  \ar[r]^-{1\bullet \theta}  \ar[d]^-{\varepsilon \bullet 1}
\ar@{}[rddd]|-{\eqref{eq:int}} &
% 3.3
a \bullet a \bullet i  \ar[r]^-{\delta\bullet 1 \bullet 1} \ar[ddd]^-{\mu \bullet 1}
\ar@{}[rddd]|-{(\ast)} &
% 3.4
(a\circ a) \bullet a \bullet i \ar[d]^-{1\bullet \delta \bullet 1} \\
% 4.1
i \ar[r]^-{1\bullet \xi^0_0} \ar@{=}@/_1.2pc/[rdd] &
% 4.2
i\bullet i\ar[dd]^-{\xi_0} &&
% 4.4
(a\circ a) \bullet (a\circ a) \bullet i \ar[d]^-{\xi \bullet 1} \\
% 5.4
&&& [(a\bullet a) \circ (a\bullet a)] \bullet i \ar[d]^-{(\mu \circ \mu) \bullet 1} &&&
% 5.7
[(a \bullet a \bullet a \bullet a) \circ j] \bullet i \ar[d]^-{[(\mu \bullet \mu)\circ 1] \bullet 1} \\
% 6.2
& i \ar[r]^-{\theta} \ar@{}@/_1.2pc/[rrr]_-{\omega} &
% 7.3
a\bullet i \ar[r]^-{\delta \bullet 1} &
% 7.4
(a\circ a ) \bullet  i \ar[r]^-{(1\circ \sigma) \bullet 1} &
% 7.5
(a\circ a^-) \bullet  i \ar[r]_-{\varphi \bullet 1} &
% 7.6
[(a \bullet a) \circ j] \bullet  i \bullet  i \ar[r]_-{1\bullet \xi_0} &
% 7.7
[(a \bullet a) \circ j] \bullet  i}$}
\caption{Proof of Lemma \ref{lem:Omega}~(3)}
\label{fig:Lem5}
\end{amssidewaysfigure}

\begin{proposition} \label{prop:Prop6}
Consider a naturally Frobenius map monoidale $M$ in a monoidal bicategory $\mathcal B$, and the duoidal category $\mathcal B(M,M)$ of Section \ref{sec:B(M,M)}.
\begin{itemize}
\item[{(1)}]
For a Hopf monoid 
$(a,(\mu,\eta),(\delta,\varepsilon),\sigma)$ 
in $\mathcal B(M,M)$, and a left integral $\theta$, 
the corresponding morphism $\Omega$ of Lemma \ref{lem:Omega}
renders commutative the following diagram.
$$
\xymatrix{
a \ar[r]^-{\Omega \bullet 1}  \ar[d]_-{1 \bullet \Omega} &
a \bullet a \bullet a \ar[d]^-{1 \bullet \mu} \\
a \bullet a \bullet a \ar[r]_-{\mu \bullet 1} &
a \bullet a}
$$
\item[{(2)}] If the integral $\theta$ of part (1) is normalized then the equal paths around the diagram of part (1) provide an $a$-bimodule section of the multiplication $\mu$. Thus the $\bullet$-monoid $(a,\mu,\eta)$ is separable.
\end{itemize}
\end{proposition}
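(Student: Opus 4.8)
The plan is to establish part (1) by a single diagram chase and then to read off part (2) almost formally. Write $\nabla_R:=(1\bullet\mu)\circ(\Omega\bullet 1)$ and $\nabla_L:=(\mu\bullet 1)\circ(1\bullet\Omega)$ for the two composites $a\to a\bullet a$ appearing as the upper and lower paths of the square in part (1) (with the unit isomorphisms $a\cong a\bullet j\cong j\bullet a$ suppressed, as elsewhere in the paper). Then part (1) asserts precisely that $\nabla_L=\nabla_R$, and I will denote this common morphism by $\nabla$ when proving part (2).

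To see what the chase should produce it helps to read every arrow in the Hopf-algebra language in which $\theta=\Lambda$ is a left integral and $\Omega$ is the straightening of $\sum\Lambda_1\bullet\sigma(\Lambda_2)$ (here $\bullet$ is the tensor square and juxtaposition is $\mu$). Then $\nabla_L(g)=\sum g\Lambda_1\bullet\sigma(\Lambda_2)$ and $\nabla_R(g)=\sum\Lambda_1\bullet\sigma(\Lambda_2)g$, and $\nabla_L=\nabla_R$ is the Casimir identity. I would prove it by exhibiting both as the single expression $E(g)=\sum g_1\Lambda_1\bullet\sigma(\Lambda_2)\sigma(g_2)g_3$ coming from the twofold comultiplication: the equality $\nabla_R=E$ is obtained by expanding $g=\sum\varepsilon(g_1)g_2$ and invoking the left-integral identity $\sum\varepsilon(h)\Lambda_1\bullet\sigma(\Lambda_2)=\sum h_1\Lambda_1\bullet\sigma(\Lambda_2)\sigma(h_2)$, whereas $E=\nabla_L$ is the collapse of $\sum\sigma(g_2)g_3$ to a counit by the antipode relation $\sum\sigma(x_1)x_2=\varepsilon(x)1$. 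This pinpoints which categorical ingredient is responsible for each half.

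The categorical realization assembles one large diagram out of the lemmas already in hand. The operator running along the top of both Figure~\ref{fig:Lem4} and Figure~\ref{fig:Lem5},
\[
\Phi:=\big([(\mu\bullet\mu)\circ 1]\bullet 1\big)\circ\big([(1\bullet\Omega\bullet 1)\circ 1]\bullet 1\big)\circ\varphi\circ(1\circ\sigma)\colon a\circ a\to[(a\bullet a)\circ j]\bullet i,
\]
is exactly ``insert $\Omega$ and multiply on either side'', transported into the Frobenius picture by $\sigma$ and $\varphi$. Lemma~\ref{lem:Omega}~(1) identifies $\Phi$ with the explicit $\theta$-built description of $\Omega$, while Lemma~\ref{lem:Omega}~(3) supplies the integral identity in the form that $\Phi\circ\delta$ equals $\varepsilon$ followed by the bottom edge of Figure~\ref{fig:Lem5} --- the categorical counterpart of the left-integral identity above. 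Starting the chase at $a$ with the iterated comultiplication, I would collapse one branch of the Casimir square by Lemma~\ref{lem:Omega}~(3) and the other by the antipode conditions of \cite[Theorem~7.2]{BohmLack} together with \eqref{eq:step3}, letting coassociativity and counitality of the $\circ$-comonoid do the bookkeeping and using \eqref{eq:Lem.5}, \eqref{eq:Lem.6}, \eqref{eq:Lem.7} and \eqref{eq:step4} to pass between the honest product $a\bullet a$ and the twisted object $[(a\bullet a)\circ j]\bullet i$ through $\lambda$, $\varrho$ and $\varphi$. The main obstacle is precisely this Frobenius straightening: $\Phi$ does not land in $a\bullet a$ but in $[(a\bullet a)\circ j]\bullet i$, so every intermediate equality must be transported across the $\lambda/\varphi$-correspondence, and the mate $a^-$ forces one to control the interaction of $\delta^-$ and $\mu^-$ with $\sigma$ via \cite[Theorem~7.5]{BohmLack}. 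Verifying that the regions gluing the two figures together commute --- rather than the figures themselves, which are assumed --- is where the genuine coherence work sits.

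Granting part (1), part (2) is formal. Put $\nabla:=\nabla_L=\nabla_R$. For the section property,
\[
\mu\circ\nabla=\mu\circ(\mu\bullet 1)\circ(1\bullet\Omega)=\mu\circ(1\bullet\mu)\circ(1\bullet\Omega)=\mu\circ\big(1\bullet(\mu\circ\Omega)\big)=\mu\circ(1\bullet\eta)=1_a,
\]
using associativity, functoriality of $\bullet$, Lemma~\ref{lem:Omega}~(2) and right unitality. For the bimodule property, the left-module law $\nabla\circ\mu=(\mu\bullet 1)\circ(1\bullet\nabla)$ follows from associativity and the interchange law applied to the presentation $\nabla=\nabla_L$, and symmetrically the right-module law $\nabla\circ\mu=(1\bullet\mu)\circ(\nabla\bullet 1)$ follows from $\nabla=\nabla_R$; the point is that part (1) makes these two presentations the \emph{same} morphism, so both laws hold simultaneously. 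Hence $\nabla$ is an $a$-bimodule section of $\mu$, and the $\bullet$-monoid $(a,\mu,\eta)$ is separable.
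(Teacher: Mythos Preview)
Your strategy matches the paper's. Part~(2) is exactly the paper's argument: $\nabla_R$ is manifestly a right $a$-module map, $\nabla_L$ is manifestly a left $a$-module map, and $\mu\cdot\nabla=1_a$ follows from associativity, unitality and Lemma~\ref{lem:Omega}~(2).

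For part~(1) the paper organizes the chase as two commutative figures sharing a common top-right path (the $\omega\circ 1$ description followed by $\lambda$ and $1\bullet\mu$). Figure~\ref{fig:Prop6upper} identifies this path with $(1\bullet\mu).(\Omega\bullet 1)$ by unfolding the definition of $\Omega$, using the right $i$-linearity of $\omega$ (inherited from that of $\theta$) together with \eqref{eq:Lem.5}; Figure~\ref{fig:Prop6lower} identifies the same path with $(\mu\bullet 1).(1\bullet\Omega)$ via Lemma~\ref{lem:Omega}~(3), the antipode axiom \cite[Theorem~7.2]{BohmLack}, and \eqref{eq:Lem.6}--\eqref{eq:Lem.7}. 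This is precisely your Hopf-algebraic plan $\nabla_R=E=\nabla_L$, with the easier half (Figure~\ref{fig:Prop6upper}) being the unfolding and the harder half (Figure~\ref{fig:Prop6lower}) carrying both the integral identity and the antipode collapse. Your ingredient list is slightly off: \eqref{eq:step3}, \eqref{eq:step4} and \cite[Theorem~7.5]{BohmLack} are not invoked here (they belong to Section~\ref{sec:DualMaschke} and to the internal proof of Lemma~\ref{lem:Omega}), and the right $i$-linearity of $\omega$ --- the key to the elementary half --- is not named explicitly in your sketch. But the approach is the same.
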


\begin{proof}
(1) In both Figures \ref{fig:Prop6upper} and \ref{fig:Prop6lower}, $\omega$ denotes the morphism of \eqref{eq:omega}.
The  top-right paths of the commutative diagrams in Figure \ref{fig:Prop6upper} and in Figure \ref{fig:Prop6lower} coincide. Hence their left-bottom paths are equal. The region marked by $(\ast)$ in Figure \ref{fig:Prop6upper}  commutes since by the right $i$-linearity of $\theta$ also $\omega$ is right $i$-linear.

(2) The top-right path of the diagram of part (1) is obviously a right $a$-module morphism and the left-bottom path is a left $a$-module morphism. It follows by the associativity and the unitality of the $\bullet$-monoid $(a,\mu,\eta)$, together with Lemma \ref{lem:Omega}~(2), that either path around the diagram of part (1) provides a section of $\mu$.
\end{proof}

\begin{amssidewaysfigure}
\thisfloatpagestyle{empty}
{\color{white} .}
\hspace*{-2cm}
\vspace{2cm}
\centering
\scalebox{1}{$
\xymatrix@C=45pt@R=25pt{
% 1.1
a \ar@{=}[rr] \ar@{=}[d] \ar@/_7.5pc/[ddddd]_(.75){\Omega \bullet 1} &&
% 1.3
a \ar[dd]^-{\omega \circ 1} \\
% 2.1
(i\circ j) \bullet (i\circ a) \ar[r]^-\xi \ar[d]^-{(\omega \circ 1) \bullet 1} &
% 2.2
(i\bullet i) \circ a \ar[ru]_-{\xi_0 \circ 1} \ar[d]^-{(\omega \bullet 1) \circ 1} 
\ar@{}[rd]^-{(\ast)} \\
% 3.1
([(a\circ a^-) \bullet i] \circ j) \bullet a \ar[d]^-{[(\varphi \bullet 1) \circ 1] \bullet 1} &
% 3.2
[(a\circ a^-) \bullet i \bullet i] \circ a \ar[d]^-{(\varphi \bullet 1 \bullet 1) \circ 1} 
\ar[r]^-{(1\bullet \xi_0) \circ 1} &
% 3.3
[(a\circ a^-) \bullet i] \circ a \ar[d]^-{(\varphi \bullet 1 ) \circ 1} \\
% 4.1
[([(a\bullet a) \circ j]  \bullet i  \bullet i) \circ j] \bullet a \ar[d]^-{[(1\bullet \xi_0) \circ 1] \bullet 1} &
% 4.2
[([(a\bullet a) \circ j]  \bullet i  \bullet i \bullet i)] \circ a \ar[d]^-{(1\bullet \xi_0 \bullet 1) \circ 1}
\ar[r]^-{(1\bullet 1 \bullet \xi_0 ) \circ 1} &
% 4.3
[([(a\bullet a) \circ j] \bullet i \bullet i)] \circ a \ar[d]^-{(1\bullet \xi_0) \circ 1} \\
% 5.1
\qquad [\! ( [\! (a\! \bullet a\! )\! \circ\!  j\! ]\!   \bullet \! i) \! \circ \! j \! ] \! \bullet \! (\! i\! \circ\! a \!) 
\ar[r]^-\xi \ar[d]^-{\lambda \bullet 1} 
 \ar@{}[rrd]|-{\eqref{eq:Lem.5}} &
% 5.2
[([(a\bullet a) \circ j] \bullet i \bullet i)] \circ a \ar[r]^-{(1\bullet \xi_0) \circ 1} &
% 5.3
[([(a\bullet a) \circ j] \bullet i)] \circ a \ar[d]^-\lambda \\
% 6.1
a\bullet a \bullet a \ar@{=}[rr] &&
% 6.3
a\bullet a \bullet a \ar[d]^-{1\bullet \mu} \\
% 7.3
&& a\bullet a}$}
\caption{Proof of Proposition \ref{prop:Prop6} --- computation of the upper path}
\label{fig:Prop6upper}
\end{amssidewaysfigure}

\begin{amssidewaysfigure}
\thisfloatpagestyle{empty}
{\color{white} .}
\hspace*{-2cm}
\vspace{2cm}
\centering
\scalebox{1}{$
\xymatrix@C=10pt@R=25pt{
% 1.1
a \ar[r]^-{\raisebox{8pt}{${}_{\omega \circ 1}$}} \ar@{=}[d] &
% 1.2
[(a \circ a^-) \bullet i] \circ a \ar[r]^-{\raisebox{8pt}{${}_{(\varphi \bullet 1) \circ 1}$}} &
% 1.3
([(a\bullet a) \circ j] \bullet i \bullet i) \circ a \ar[r]^-{\raisebox{8pt}{${}_{(1\bullet \xi_0) \circ 1}$}} &
% 1.4
([(a\bullet a) \circ j] \bullet i) \circ a  \ar[rrrr]^-{\raisebox{8pt}{${}_\lambda$}} &&&&
% 1.8
a \bullet a \bullet a \ar[ddddddd]^-{1\bullet \mu} \\
% 2.1
a \ar[r]^-\delta  \ar@{=}[ddd]  \ar[rdd]^-\delta &
% 2.2
a\circ a \ar[rdd]^-{\delta \circ 1} \ar@{}[rrd]|-{
\textrm{Figure \ref{fig:Lem5}}} 
\ar[lu]_-{\varepsilon \circ 1} &&
% 2.5
([(a \bullet a \bullet a \bullet a) \circ j] \bullet i) \circ a \ar[u]_-{([(\mu \bullet \mu)\circ 1]\bullet 1) \circ 1} &&&
% 2.8
a \bullet a \bullet a \bullet a \bullet a \ar[ru]_-{\mu \bullet \mu \bullet 1}  
\ar[ddddd]^-{1\bullet 1\bullet 1\bullet  \mu} \\
% 3.4
&&& ([(a \bullet a) \circ j] \bullet i) \circ a \ar[rr]^-\lambda 
\ar[u]_-{([(1 \bullet \Omega \bullet 1) \circ 1]\bullet 1) \circ 1}  &
% 3.5
\ar@{}[ld]|-{\eqref{eq:Lem.7}} &
% 3.6
a \bullet a \bullet a \ar[ru]_-{1\bullet \Omega \bullet 1 \bullet 1} \ar[ddd]^-{1\bullet \mu} \\
% 4.2
& a\circ a \ar[r]^-{1\circ \delta} \ar[ld]^(.3){1\circ \varepsilon} &
% 4.3
a\circ a\circ a \ar[r]^-{1\circ \sigma \circ 1} 
\ar@{}[rd]|-{{\mbox{\tiny \cite[Theorem 7.2]{BohmLack}}}} &
% 4.4
a \circ a^- \circ a \ar[r]^-{1\circ \psi}  \ar[u]_-{\varphi \circ 1} &
% 4.5
a\circ (i \bullet [j\circ (a\bullet a)]) \ar[d]^-{1\circ [1\bullet (1\circ \mu)]} \ar[ru]_-\varrho \\
% 5.1
a \ar[rr]^-{1\circ (1\bullet \xi^0)} \ar@{=}[d] \ar@{}[rd]|(.4){\eqref{eq:Lem.6}} &&
% 5.3
a\circ [i \bullet (j\circ j)] \ar[lld]^-\varrho \ar[rr]^-{1\circ [1\bullet (1\circ \eta)]} &&
% 5.5
a\circ [i \bullet (j\circ a)] \ar[rd]^-\varrho \\
% 6.1
a \ar[rrrrr]^(.55){1\bullet \eta} \ar[d]_-{1\bullet \Omega} &&&&&
% 6.6
a \bullet a \ar[rd]^-{1\bullet \Omega \bullet 1} \\
% 7.1
a \bullet a \bullet a \ar[d]_-{\mu \bullet 1} \ar[rrrrrr]^-{1\bullet 1\bullet 1 \bullet \eta} &&&&&&
% 7.7
a \bullet a \bullet a \bullet a \ar[rd]^-{\mu \bullet \mu} \\
% 8.1
a \bullet a \ar@{=}[rrrrrrr] &&&&&&&
% 8.8
a \bullet a}$}
\caption{Proof of Proposition \ref{prop:Prop6} --- computation of the lower path}
\label{fig:Prop6lower}
\end{amssidewaysfigure}

For a naturally Frobenius map monoidale $M$ in a monoidal bicategory $\mathcal B$, and
the duoidal category $\mathcal B(M,M)$ of Section \ref{sec:B(M,M)}, we may regard right $i$-modules as Eilenberg-Moore algebras of the monad $- \bullet i$ on $\mathcal B(M,M)$ (whose multiplication is induced by $\xi_0$ and whose unit is induced by $\xi^0_0$). Let us denote their category by 
$\mathcal B(M,M)^{- \bullet i}$.
For a monoid $a$ in $(\mathcal B(M,M),\bullet)$, we may regard left $a$-modules - right $i$-modules as Eilenberg-Moore algebras of the 
monad $a\bullet - \bullet i$ on $\mathcal B(M,M)$ --- equivalently, as Eilenberg-Moore algebras of the 
monad $a\bullet - $ on $\mathcal B(M,M)^{- \bullet i}$. We denote their category by $\mathcal B(M,M)^{a\bullet - \bullet i}$; and we use similar notations when the roles of left and right actions are interchanged.

The following Maschke type theorem is our first main result.

\begin{theorem} \label{thm:Maschke}
Consider a naturally Frobenius map monoidale $M$ in a monoidal bicategory $\mathcal B$. For a Hopf monoid 
$(a,(\mu,\eta),(\delta,\varepsilon),\sigma)$ in the duoidal category $\mathcal B(M,M)$ of Section \ref{sec:B(M,M)}, the following assertions are equivalent.
\begin{itemize}
\item[{(i)}] 
The monoid  $(a,\mu,\eta)$ in $(\mathcal B(M,M),\bullet,j)$ is separable.
\item[{(ii)}] There exists a normalized left integral $i \to a\bullet i$.
\item[{(iii)}] There exists a normalized right integral $i \to i\bullet a$.
\item[{(iv)}] The forgetful functor $\mathcal B(M,M)^{a\bullet - \bullet i}\to \mathcal B(M,M)^{- \bullet i}$ is separable (in the sense of \cite[page 398]{NVdBVO}).
\item[{(v)}] 
The forgetful functor $\mathcal B(M,M)^{i\bullet -\bullet a}\to \mathcal B(M,M)^{i\bullet -}$ is separable.
\item[{(vi)}] 
The forgetful functor $\mathcal B(M,M)^{a\bullet - \bullet i}\to \mathcal B(M,M)^{- \bullet i}$ reflects split epimorphisms.
\item[{(vii)}] 
The forgetful functor $\mathcal B(M,M)^{i\bullet -\bullet a}\to \mathcal B(M,M)^{i\bullet -}$ reflects split epimorphisms.
\end{itemize}
\end{theorem}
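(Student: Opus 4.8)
The plan is to prove the theorem in two stages: first establish the ``left-handed'' cycle of implications $\mathrm{(ii)}\Rightarrow\mathrm{(i)}\Rightarrow\mathrm{(iv)}\Rightarrow\mathrm{(vi)}\Rightarrow\mathrm{(ii)}$, and then deduce the ``right-handed'' statements $\mathrm{(iii)},\mathrm{(v)},\mathrm{(vii)}$ by transporting this cycle through the reversal of Remark \ref{rem:rev_Hopf}. The deep implication $\mathrm{(ii)}\Rightarrow\mathrm{(i)}$ is already available: it is exactly Proposition \ref{prop:Prop6}~(2), which turns a normalized left integral $\theta$ into the $a$-bimodule section $\Omega\bullet 1=1\bullet\Omega$ of $\mu$ witnessing separability. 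The remaining implications are formal consequences of the theory of separable functors together with one explicit splitting, so I do not expect any further hard computation beyond what the figures already supply.

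For $\mathrm{(i)}\Rightarrow\mathrm{(iv)}$ I would use that $\mathcal B(M,M)^{a\bullet-\bullet i}$ is the Eilenberg--Moore category of the monad $a\bullet-$ on $\mathcal B(M,M)^{-\bullet i}$, as recalled before the statement, so that the forgetful functor $U$ is right adjoint to $a\bullet-$ with counit given by the $a$-action. A separability datum $\nabla\colon a\to a\bullet a$ yields the separability element $e:=\nabla\circ\eta\colon j\to a\bullet a$, and the standard averaging construction (the monoidal version of the characterization of separable forgetful functors of modules over a monoid, \cite{NVdBVO}) produces the natural transformation splitting $U$ on hom-sets; the extra right $i$-module structure is merely carried along on the right of $\bullet$ and never interferes, since $\nabla$ involves only the $\bullet$-monoid structure. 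For $\mathrm{(iv)}\Rightarrow\mathrm{(vi)}$ I would invoke the elementary fact that a separable functor reflects split epimorphisms: if $P$ denotes the splitting natural transformation and $Uf$ admits a section $g$, then $P(g)$ is a section of $f$, since $f\circ P(g)=P(Uf\circ g)=P(1)=1$ by the splitting and naturality axioms of $P$.

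The implication $\mathrm{(vi)}\Rightarrow\mathrm{(ii)}$ is where I would produce the integral explicitly. Consider the augmentation $\pi:=\xi_0\circ(\varepsilon\bullet 1)\colon a\bullet i\to i$, where $i$ carries the left $a$-action $\xi_0\circ(\varepsilon\bullet 1)$; the bimonoid axioms that $\varepsilon$ is $\bullet$-multiplicative and that $\varepsilon\circ\eta=\xi^0_0$ guarantee that $i$ is a left $a$-module and that $\pi$ is a morphism in $\mathcal B(M,M)^{a\bullet-\bullet i}$. On underlying right $i$-modules, $\pi$ is a split epimorphism: the right $i$-linear map $\eta\bullet 1\colon i\cong j\bullet i\to a\bullet i$ is a section of it, again by $\varepsilon\circ\eta=\xi^0_0$ and the monad unit law. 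Hence, if $U$ reflects split epimorphisms, $\pi$ splits in the module category by some $\theta\colon i\to a\bullet i$; being left $a$-linear, right $i$-linear, and a section of $\pi$, this $\theta$ satisfies precisely the three diagrams of \eqref{eq:int}, so it is a normalized left integral in the sense of Definition \ref{def:integral}, closing the cycle.

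Finally, for the right-handed assertions I would apply the entire cycle to the reversed duoidal category $(\mathcal B(M,M),\bullet\rev,\circ\rev)$, which by Remark \ref{rem:rev} again belongs to the class treated in Section \ref{sec:B(M,M)} and in which $a$ is again a Hopf monoid by Remark \ref{rem:rev_Hopf}. There a ``left integral'' is by Definition \ref{def:integral} a right integral for $a$, and a ``left $a$-module right $i$-module'' becomes a right $a$-module left $i$-module, so the reversed versions of $\mathrm{(ii)},\mathrm{(iv)},\mathrm{(vi)}$ are exactly $\mathrm{(iii)},\mathrm{(v)},\mathrm{(vii)}$. It then remains to bridge the two families, i.e.\ to see that the reversed version of $\mathrm{(i)}$ coincides with $\mathrm{(i)}$ itself; this holds because $a$ carries the same multiplication $\mu$ in both $\bullet$ and $\bullet\rev$, and an $a$-bimodule section of $\mu$ is a left--right symmetric datum. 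I expect the only genuine subtlety to be this bookkeeping --- verifying that the reversal lands back inside the framework of Section \ref{sec:B(M,M)} and matches the module-theoretic conditions on the nose --- rather than any new diagrammatic identity.
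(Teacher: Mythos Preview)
Your proposal is correct and follows essentially the same route as the paper: the cycle $\mathrm{(ii)}\Rightarrow\mathrm{(i)}$ via Proposition~\ref{prop:Prop6}(2), $\mathrm{(i)}\Rightarrow\mathrm{(iv)}\Rightarrow\mathrm{(vi)}$ via standard separable-functor facts (the paper cites \cite[2.9]{BBW} and \cite[Proposition~1.2]{NVdBVO}), and $\mathrm{(vi)}\Rightarrow\mathrm{(ii)}$ via the explicit splitting of $\xi_0\circ(\varepsilon\bullet 1)$ by $\eta\bullet 1$ are exactly the paper's steps. The paper obtains the right-handed implications by saying they follow ``symmetrically'', which is precisely your transport through Remark~\ref{rem:rev_Hopf}; your closing observation that separability of $(a,\mu,\eta)$ is a left--right symmetric datum is what makes the reversed version of~$\mathrm{(i)}$ coincide with~$\mathrm{(i)}$, and the paper leaves this implicit as well.
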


\begin{proof}
It is well-known that (i)$\Rightarrow$(iv), (v) --- see e.g. \cite[Paragraph 2.9]{BBW} --- and that (iv)$\Rightarrow$(vi) and (v)$\Rightarrow$(vii) --- see \cite[Proposition 1.2]{NVdBVO}.
By the bimonoid axiom $\varepsilon.\eta=\xi^0_0$, the right vertical of
$$
\xymatrix{
i \ar@{=}[r] \ar@{=}@/_2pc/[rrdd] &
j \bullet i \ar[r]^-{\eta \bullet 1} \ar[rd]_-{\xi^0_0 \bullet 1} &
a \bullet i \ar[d]^-{\varepsilon \bullet 1} \\
&& i \bullet i \ar[d]^-{\xi_0} \\
&& i}
$$
is an epimorphism of right $i$-modules split by the top row. It is a morphism of left $a$-modules by the commutativity of 
$$
\xymatrix@C=40pt{
a\bullet a \bullet i \ar[r]^-{1 \bullet \varepsilon \bullet 1} \ar[dd]_-{\mu \bullet 1} &
a\bullet i \bullet i \ar[r]^-{1\bullet \xi_0} \ar[d]^-{\varepsilon \bullet 1 \bullet 1} &
a\bullet i \ar[d]^-{\varepsilon \bullet 1} \\
& i \bullet i \bullet i \ar[r]^-{1\bullet \xi_0} \ar[d]^-{\xi_0 \bullet 1} &
i \bullet i  \ar[d]^-{\xi_0} \\
a\bullet i \ar[r]_-{\varepsilon \bullet 1} &
i \bullet i  \ar[r]_-{\xi_0} &
i}
$$
whose rectangle on the left commutes by a bimonoid axiom.
Thus if (vi) holds then it is a split epimorphism of left $a$-modules - right $i$-modules as well whence its section is a normalized left integral. This proves (vi)$\Rightarrow$(ii).
The implication (vii)$\Rightarrow$(iii) follows symmetrically.
We infer from Proposition \ref{prop:Prop6} that (ii)$\Rightarrow$(i) and (iii)$\Rightarrow$(i) follows symmetrically.
\end{proof}

\begin{remark} \label{rem:TuraevVirelizier}
Any bimonoid $(a,(\mu,\eta),(\delta,\varepsilon))$ in a duoidal category $(\mathsf D,\bullet,\circ)$ induces an opmonoidal monad $a\bullet (-)$ on the monoidal category $(\mathsf D,\circ)$, see \cite[Theorem 6.7]{BookerStreet}. Application of the Maschke type theorem \cite[Theorem 8.11]{TuraevVirelizier} to it yields the equivalence of the following assertions.
\begin{itemize}
\item[{($\ast$)}] The counit of the adjunction $a\bullet - \dashv \textrm{ forgetful functor}:\mathsf D^{a\bullet -}\to \mathsf D$ possesses a right inverse. 
\item[{(\raisebox{-3pt}{$\stackrel {\displaystyle\ast}\ast$})}] For any object $(v,\xymatrix@C=12pt{a\bullet v \ar[r]^-\nu & v})$ of $\mathsf D^{a\bullet -}$, $\nu$ is a split epimorphism in $\mathsf D^{a\bullet -}$.
\item[{(\raisebox{-3pt}{$\stackrel{\displaystyle \ast\ast}{\, \ast}$})}] $\xymatrix@C=15pt{a\bullet i \ar[r]^-{\varepsilon \bullet 1}& i\bullet i \ar[r]^-{\xi^0} & i}$ 
is a split epimorphism in $\mathsf D^{a\bullet -}$.
\end{itemize}

Consider now a naturally Frobenius map monoidale $M$ in a monoidal bicategory $\mathcal B$. 
There are several easy ways to see that the above equivalent conditions hold for a Hopf monoid  
in the duoidal category $\mathcal B(M,M)$ of Section \ref{sec:B(M,M)} if it satisfies the equivalent conditions of Theorem \ref{thm:Maschke} (while there is no reason to expect the converse in general). For example, Theorem \ref{thm:Maschke}~(ii)$\Rightarrow\ (\raisebox{-3pt}{$\stackrel{\displaystyle \ast\ast}{\, \ast}$})$ is trivial.
\end{remark}

%%%%%%%%%%%%%%%%%%    SEC  3  %%%%%%%%%%%%%

\section{Coseparability of a Hopf monoid} \label{sec:DualMaschke}

Recall that a comonoid in a monoidal category  is said to be {\em coseparable} if it is a separable monoid in the opposite category. That is, its comultiplication admits a bicomodule retraction.
The aim of this section is to find sufficient and necessary conditions for the coseparability of the constituent comonoid of a Hopf monoid in the duoidal category $\mathcal B(M,M)$ 
for a naturally Frobenius map monoidale $M$ in some monoidal bicategory $\mathcal B$.

Both monoidal structures of the duoidal category $\mathcal B(M,M)$ come  from different sources: one of them is a composition while the other one is a convolution in $\mathcal B$. Their roles can not be interchanged. Therefore the results of this section can not be obtained from those in Section \ref{sec:Maschke} by some kind of duality; they need independent proofs.

\begin{definition} \label{def:cointegral}
By a {\em left cointegral} for a bimonoid $(a,(\mu,\eta),(\delta,\varepsilon))$ in a duoidal category $(\mathsf D,\bullet,\circ)$ we mean a left integral for the bimonoid $(a,(\delta,\varepsilon),(\mu,\eta))$ in the duoidal category  $(\mathsf D\op,\circ,\bullet)$.
That is, a left $a$-comodule right $j$-comodule morphism $\theta:a\circ j \to j$; meaning that the first two diagrams of 
\begin{equation} \label{eq:cointegral}
\xymatrix{
a \circ j \ar[r]^-\theta \ar[dd]_-{\delta \circ 1} &
j \ar[d]^-{\xi^0} \\
& j\circ j \ar[d]^-{\eta \circ 1} \\
a\circ a \circ j \ar[r]_-{1 \circ \theta} &
a\circ j}
\qquad \qquad
\xymatrix@R=66pt{
a \circ j \ar[r]^-\theta \ar[d]_-{1\circ \xi^0} &
j \ar[d]^-{\xi^0} \\
a  \circ j \circ j \ar[r]_-{\theta \circ 1} &
j\circ j}
\qquad \qquad
\xymatrix{
j \ar[d]_-{\xi^0} \ar@{=}[rdd] \\
j\circ j \ar[d]_-{\eta \circ 1} \\
a \circ j \ar[r]_-\theta &
j}
\end{equation}
commute. A left cointegral $\theta$ is said to be {\em normalized} if it is normalized as a left integral; that is, also the third diagram above commutes.

Dually, a {\em right cointegral} for a bimonoid $a$ in $(\mathsf D,\bullet,\circ)$ is a left cointegral for $a$ regarded as a bimonoid in $(\mathsf D,\bullet\rev,\circ\rev)$. That is, a right $a$-comodule left  $j$-comodule morphism $\theta:j\circ a \to j$. A right cointegral is {\em normalized} if it is normalized as a left cointegral; that is, it is a retraction of the $a$-coaction
$\xymatrix@C=15pt{
j \ar[r]^-{\xi^0} & j \circ j \ar[r]^-{1\circ \eta} & j \circ a}$.
\end{definition}

Consider a naturally Frobenius map monoidale $M$ in a monoidal bicategory $\mathcal B$ and the duoidal category $\mathcal B(M,M)$ of Section \ref{sec:B(M,M)}.
For a Hopf monoid 
$(a,(\mu,\eta),(\delta,\varepsilon),\sigma)$ in $\mathcal B(M,M)$, 
and any morphism $\theta: j\circ a \to j$,
consider the composite morphism
\begin{equation} \label{eq:Theta}
\Theta:=(
\xymatrix{
a \circ a \ar[r]^-{\sigma \circ 1} &
a^- \circ a \ar[r]^-\psi &
i\bullet [j\circ (a\bullet a)] \ar[r]^-{1\bullet (1\circ \mu)} &
i \bullet (j\circ a) \ar[r]^-{1\bullet \theta} &
i}).
\end{equation}

\begin{lemma} \label{lem:Theta}
Consider a naturally Frobenius map monoidale $M$ in a monoidal bicategory $\mathcal B$ 
and a Hopf monoid 
$(a,(\mu,\eta),(\delta,\varepsilon),\sigma)$ 
in the duoidal category $\mathcal B(M,M)$ of Section \ref{sec:B(M,M)}.
For a morphism $\theta: j\circ a \to j$
such that the rightmost diagram of \eqref{eq:cointegral} commutes,
the associated morphism $\Theta$ of \eqref{eq:Theta} satisfies $\Theta.\delta=\varepsilon$.
\end{lemma}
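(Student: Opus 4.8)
The plan is to prove $\Theta.\delta=\varepsilon$ by stripping off the last map $1\bullet\theta$ in the definition \eqref{eq:Theta} of $\Theta$, rewriting the remaining composite by means of an antipode axiom, and then collapsing the result with the normalization hypothesis on $\theta$. Writing
$$\Theta_0:=(1\bullet(1\circ\mu)).\psi.(\sigma\circ 1)\colon a\circ a\to i\bullet(j\circ a),$$
we have $\Theta=(1\bullet\theta).\Theta_0$, and hence $\Theta.\delta=(1\bullet\theta).(\Theta_0.\delta)$.

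The crucial ingredient is the antipode axiom of \cite[Theorem 7.2]{BohmLack} exhibiting $\sigma$ as a left convolution inverse of the identity. Expressed through $\psi$ rather than $\varphi$ --- equivalently, read in the reversed duoidal category $(\mathcal B(M,M),\bullet\rev,\circ\rev)$ of Remark \ref{rem:rev_Hopf}, under which the roles of $\varphi$ and $\psi$ are interchanged --- it asserts that
$$\Theta_0.\delta=(1\bullet(1\circ\eta)).(1\bullet\xi^0).\varepsilon,$$
modulo the coherence isomorphism $i\cong i\bullet j$ that identifies the $\bullet$-unit $j$. This is the genuine ``$\sigma*\mathrm{id}=u\varepsilon$'' statement: the occurrence of $\psi$ and of the outer $i\bullet(-)$ is exactly the Frobenius/duoidal repackaging of the convolution of the two legs $\sigma$ and $\mathrm{id}$.

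Granting this identity, the argument closes in a few lines. Post-composing with $1\bullet\theta$ and using functoriality of the convolution product $-\bullet-$ gives
$$\Theta.\delta=(1\bullet[\theta.(1\circ\eta).\xi^0]).\varepsilon,$$
again up to $i\cong i\bullet j$. The bracketed morphism $\theta.(1\circ\eta).\xi^0\colon j\to j$ is precisely the composite forced to be the identity by the normalization (rightmost) diagram of \eqref{eq:cointegral}, read for the right cointegral $\theta\colon j\circ a\to j$; by hypothesis it equals $1_j$. Therefore $1\bullet[\theta.(1\circ\eta).\xi^0]=1$ and $\Theta.\delta=\varepsilon$, as required.

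The main obstacle is the first step: matching the antipode axiom to the exact shape needed for $\Theta_0$. As recorded in \cite[Theorem 7.2]{BohmLack} (compare the region so labelled in \eqref{eq:step3}), the axiom is phrased through $\varphi$ and the morphism $\lambda$, so obtaining its $\psi$-incarnation requires the strong duoidal equivalence $b\mapsto b^-$ of Remark \ref{rem:rev}, together with the fact noted in Remark \ref{rem:rev_Hopf} that $a$ is a Hopf monoid in $(\mathcal B(M,M),\bullet\rev,\circ\rev)$ via the same structure morphisms; note that this is the legitimate reversal of $1$-cells, \emph{not} the forbidden interchange of $\bullet$ and $\circ$. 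Alternatively one proceeds by a direct diagram chase parallel to Figure \ref{fig:normalized}, which establishes the companion identity $\mu.\Omega=\eta$ for integrals. In either route the real labour is the bookkeeping of the coherence isomorphisms mediating $i\bullet j\cong i$ and $j\bullet i\cong i$, after which no Hopf-monoid axiom beyond this single antipode identity and the normalization of $\theta$ enters.
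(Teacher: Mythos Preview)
Your proof is correct and follows essentially the same route as the paper: the paper's argument is a single commutative diagram whose ingredients are exactly your two steps --- the antipode identity $\Theta_0.\delta=(1\bullet(1\circ\eta)).(1\bullet\xi^0).\varepsilon$ (cited directly from \cite[Theorem~7.2]{BohmLack}) and the normalization hypothesis on~$\theta$. The only difference is that the paper invokes the $\psi$-form of the antipode axiom directly from \cite[Theorem~7.2]{BohmLack} rather than deriving it from a $\varphi$-form via the reversal of Remark~\ref{rem:rev_Hopf}; both forms appear there, so your discussion of this ``obstacle'' is more caution than necessity.
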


\begin{proof}
The claim follows by the commutativity of the next diagram, whose top-right path is equal to $\Theta$.
$$
\xymatrix{
% 1.1
a\circ a \ar[r]^-{\sigma \circ 1} \ar@{}[rrrd]|-{\mbox{\tiny \cite[Theorem 7.2]{BohmLack}}} &
% 1.2
a^- \circ a \ar[r]^-\psi & 
% 1.3
i\bullet [j\circ (a\bullet a)] \ar[r]^-{1\bullet (1\circ \mu)} &
% 1.4
i \bullet (j\circ a) \ar@/^2pc/[rd]^-{1\bullet \theta} \\
% 2.1
a \ar[u]^-\delta \ar[r]_-\varepsilon &
% 2.2
i \ar[rr]^-{1\bullet \xi^0} \ar@/_1.4pc/@{=}[rrr] &&
% 2.4
i\bullet (j\circ j) \ar[u]^-{1\bullet (1 \circ \eta)} &
% 2.5
i\\
&&&}
\vspace*{-1cm}
$$
\end{proof}

\begin{proposition} \label{prop:Theta}
Consider a naturally Frobenius map monoidale $M$ in a monoidal bicategory $\mathcal B$, and the duoidal category $\mathcal B(M,M)$ of Section \ref{sec:B(M,M)}.
\begin{itemize}
\item[{(1)}] For a Hopf monoid 
$(a,(\mu,\eta),(\delta,\varepsilon),\sigma)$ 
in $\mathcal B(M,M)$, and a right cointegral $\theta$,
the associated morphism $\Theta$ of \eqref{eq:Theta} renders commutative the following diagram.
$$
\xymatrix{
a\circ a \ar[r]^-{\delta \circ 1} \ar[d]_-{1\circ \delta} &
a \circ a\circ a \ar[d]^-{1\circ \Theta} \\
a \circ a\circ a \ar[r]_-{\Theta \circ 1} & 
a}
$$
\item[{(2)}] If the right cointegral $\theta$ of part (1) is normalized, then the equal paths around the diagram of part (1) provide an $a$-bicomodule retraction of the comultiplication $\delta$.
\end{itemize}
\end{proposition}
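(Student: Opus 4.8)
The plan is to dispatch part (2) first as a short formal consequence of part (1), leaving the diagram chase for (1) as the real content. Write $R$ for the common value of the two paths in the square of part (1). The top-right path $(1\circ\Theta)(\delta\circ 1)$ is manifestly a morphism of left $a$-comodules (the leftmost tensorand, produced by $\delta\circ 1$, carries the left coaction and is left untouched by $\Theta$), and the left-bottom path $(\Theta\circ 1)(1\circ\delta)$ is manifestly a morphism of right $a$-comodules; since part (1) identifies these two paths, the common map $R$ is a morphism of $a$-bicomodules. That $R$ retracts $\delta$ is then purely formal: coassociativity of $\delta$ turns $(\delta\circ 1)\delta$ into $(1\circ\delta)\delta$, whence $R.\delta=(1\circ\Theta)(1\circ\delta)\delta=(1\circ\varepsilon)\delta=\mathrm{id}_a$, using Lemma \ref{lem:Theta} (valid since $\theta$ is normalized, so $\Theta.\delta=\varepsilon$) and the right counitality of $\delta$. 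Hence $R$ is an $a$-bicomodule retraction of $\delta$, which is precisely coseparability of the constituent comonoid.

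Everything therefore rests on part (1), the identity $(1\circ\Theta)(\delta\circ 1)=(\Theta\circ 1)(1\circ\delta)$. I would prove it by a diagram chase of the same shape as the proof of Proposition \ref{prop:Prop6}(1) in Figures \ref{fig:Prop6upper} and \ref{fig:Prop6lower}, but built from the \emph{reversed} ingredients. Unfolding the definition \eqref{eq:Theta} of $\Theta$ along each leg converts the square into an equality of two composites, each involving two copies of the antipode $\sigma$, two applications of $\psi$, the multiplication $\mu$, and two copies of the cointegral $\theta$, interleaved with $\delta$. The strategy is to compute the two legs separately and force them to coincide along a common subdiagram, just as the top-right paths of Figures \ref{fig:Prop6upper} and \ref{fig:Prop6lower} were made to agree.

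The structural inputs are fourfold: (a) coassociativity of $\delta$ together with the bimonoid axiom relating $\delta$ and $\mu$ (playing the role of region $(\ast)$ in Figure \ref{fig:Lem5}); (b) the monoidality of $\psi$ with respect to $\bullet$, i.e.\ the reversed form of \cite[Lemma 4.2]{BohmLack} furnished by the duality of Remark \ref{rem:rev}; (c) the antipode--comultiplication compatibility, namely the reversed form of \eqref{eq:step3} and of \cite[Theorem 7.5]{BohmLack}, which by Remark \ref{rem:rev_Hopf} holds verbatim in $(\mathcal B(M,M),\bullet\rev,\circ\rev)$ and hence, transported through $b\mapsto b^-$, yields the corresponding identity with $\varphi,\lambda$ replaced by $\psi,\varrho$; and (d) the coherence identities \eqref{eq:Lem.6} and \eqref{eq:Lem.7}. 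The genuinely indispensable new ingredient is the cointegral relation \eqref{eq:step4}: a right cointegral is precisely a left $j$-colinear morphism $j\circ a\to j$, so \eqref{eq:step4} applies and lets the nested $\kappa$-structure produced when $\Theta$ is applied after $\delta$ collapse against one copy of $\theta$ down to a single $\psi$. The remaining equivariance is supplied by the right $a$-comodule law of $\theta$, the mirror of the right $i$-linearity of $\omega$ used at region $(\ast)$ in Figure \ref{fig:Prop6upper}.

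The hard part will be combining (c) with the linchpin \eqref{eq:step4}: one must migrate an antipode and a cointegral across the comultiplication from one leg of the square to the other, and the only licence for this is the antipode axioms together with \eqref{eq:step4}, which have to be assembled coherently while carrying all the suppressed associativity and unit iso $2$-cells. Crucially, because the $\bullet$- and $\circ$-structures arise from different operations in $\mathcal B$, there is no $\bullet\leftrightarrow\circ$ duality reducing this to Proposition \ref{prop:Prop6}; as noted in Remark \ref{rem:rev}, only the reversal duality of Remarks \ref{rem:rev} and \ref{rem:rev_Hopf} is available, and it merely provides the dualized antipode identities used in (b) and (c). Laying the two legs out as two large diagrams whose common portion coincides, in the manner of Figures \ref{fig:Prop6upper} and \ref{fig:Prop6lower}, is thus where the work lies.
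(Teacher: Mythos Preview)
Your treatment of part (2) is exactly the paper's argument: the top-right path is left $a$-colinear, the left-bottom path is right $a$-colinear, and Lemma~\ref{lem:Theta} together with coassociativity and counitality give the retraction property.

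For part (1) your plan is also the paper's: two large commutative diagrams (Figures~\ref{fig:Theta_upper} and~\ref{fig:Theta_lower}) whose outer boundaries compute the two legs, forced to agree along a shared edge. You have correctly singled out the decisive ingredients, namely \eqref{eq:step4} (which applies because a right cointegral is left $j$-colinear), the right $a$-comodule law in \eqref{eq:cointegral}, the bimonoid axiom, and the coherence identities \eqref{eq:Lem.6} and \eqref{eq:Lem.7}. Two small calibrations relative to what the paper actually does: first, the identity \eqref{eq:step3} is used \emph{as stated} (with $\varphi$), not in a reversed form, and it is what identifies the left columns of the two figures; second, the structural compatibility invoked is \cite[Lemma~4.3~(iii)]{BohmLack} about $\kappa$ (applied in $\mathcal B^{\mathsf{op},\mathsf{rev}}$), rather than the $\bullet$-monoidality of $\psi$ from \cite[Lemma~4.2]{BohmLack}. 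With those adjustments your sketch matches the paper's proof.
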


\begin{proof}
(1) The region of Figure \ref{fig:Theta_upper} marked by the symbol $(\ast)$ commutes by a bimonoid axiom. The region marked by  \cite[Lemma 4.3~(iii)]{BohmLack} commutes by the application of \cite[Lemma 4.3~(iii)]{BohmLack} to the naturally Frobenius map monoidale $(M,m^*,u^*)$ in $\mathcal B^{\mathsf{op},\mathsf{rev}}$, see Remark \ref{rem:rev}.
The bottom rows of the commutative diagrams in Figure \ref{fig:Theta_upper} and in Figure \ref{fig:Theta_lower} coincide. The left columns are equal by \eqref{eq:step3}. Therefore also the top paths are equal. 

(2) The top-right path of the diagram of part (1) is obviously a morphism of left $a$-comodules and the left-bottom path is a morphism of right $a$-comodules. Either path is a retraction of $\delta$ by the coassociativity and the counitality of the comonoid $(a,\delta,\varepsilon)$, applied together with Lemma \ref{lem:Theta}.
\end{proof}

\begin{amssidewaysfigure}
\thisfloatpagestyle{empty}
{\color{white} .}
\hspace*{-2.4cm}
\vspace*{2cm}
\centering
\scalebox{1}{$
\xymatrix@C=11pt@R=30pt{
\\
\\
\\
% 1.1
a\circ a \ar[d]^-{\delta \circ 1} \\
% 2.1
a\circ a\circ a \ar[d]^-{1\circ \sigma \circ 1} \ar[rrrrrr]^-{1\circ \Theta} &&&&&&
% 2.7
a \ar@{=}[d] \ar@{=}@/^3pc/[ddd] \\
% 3.1
a\circ a^- \circ a \ar[d]^-{1\circ \delta^- \circ 1} \ar[rr]^-{1\circ \psi} &&
% 3.3
a\circ (i\bullet [j\circ (a\bullet a)]) \ar[rrr]^-{1\circ [1\bullet (1\circ \mu)]} 
\ar[d]^-{1\circ (1\bullet [1\circ (\delta \bullet \delta)])} 
\ar@{}[rrrdd]|-{(\ast)} &&&
% 3.6
a\!\circ \![\!i\!\bullet\! (\!j\!\circ \!a\!)\!] 
\ar[r]^-{\raisebox{8pt}{${}_{ 1\circ (1\bullet \theta)}$}}
\ar[dd]_-{1\circ [1\bullet (1\circ \delta)]}
\ar@{}[rdd]|(.4){\eqref{eq:cointegral}} &
% 3.7
a\! \circ \! (\! i\! \bullet\! j\! ) \ar[d]_-{1\circ (1\bullet \xi^0)} \ar@{}[rd]|(.5){\eqref{eq:Lem.6}}  \\
% 4.1
a\circ (a\circ a)^- \circ a \ar[ddd]^-\cong \ar[r]^-{1\circ 1\circ \delta} &
% 4.2
a\circ (a\circ a)^- \circ a \circ a \ar[ddd]_-\cong \ar[r]^-{1\circ \psi} 
\ar@{}[rddd]|-{\mbox{\tiny \cite[Lemma 4.3~(iii)]{BohmLack}}} &
% 4.3
a\! \circ \![\! i\! \bullet\! (\! j\! \circ\! [\! (\! a \! \circ \! a\! ) \! \bullet \! (\! a \! \circ \! a\! )\! ] )\! ]
\ar[d]^-{1\circ [1\bullet (1\circ \xi)]} &&&&
% 4.7
a\! \circ\! [\! i\! \bullet \! (\! j\! \circ \! j\! )\! ] \ar[d]_-\varrho  
\ar[ldd]_(.3)*-<.5em>{_{1\circ [1\bullet (1\circ \eta)]}}   & \\
% 5.3
&& a\!\circ\! (\!i\!\bullet\! [\! j\! \circ \! (\! a\! \bullet\! a\!)\! \circ \!(\! a\! \bullet \! a\! )\!]) 
\ar[r]^-{\raisebox{8pt}{${}_{1\circ [1\bullet (1\circ \mu \circ 1)]}$}} &
% 5.4
a\!\circ\! (\!i\!\bullet\! [\! j\! \circ \! a\! \circ \!(\! a\! \bullet \! a\! )\!]) 
\ar[rr]^-{1\circ [1\bullet (1\circ 1 \circ \mu)]}
\ar[rrdd]^(.4){1\circ [1\bullet (\theta \circ 1)]} 
\ar@{}[rdd]|-{\eqref{eq:step4}} &&
% 5.6
a\!\circ\! [\!i\! \bullet \!(\!j\!\circ \!a\! \circ\! a\!)\!] 
\ar[d]_-{1\circ [1\bullet (\theta \circ 1)]} &
% 5.7
a \ar[d]_-{1\bullet \eta} \ar@{=}@/^3pc/[dddd] \\ 
% 6.6
&&&&& a\!\circ\! [\!i\bullet\! (\!j\! \circ \!a\!)\!] \ar[r]^-\varrho &
% 6.7
a\bullet a \ar@{=}[dd] \\
% 7.1
a\circ a^- \circ a^- \circ a \ar[r]^-{1\circ 1\circ 1 \circ \delta} \ar[d]^-{\varphi \circ 1 \circ 1} &
% 7.2
a\circ a^- \circ a^- \circ a \circ a \ar[r]^-{1\circ 1 \circ \psi \circ 1} &
% 7.3
a\!\circ\! a^- \!\circ \!(\!i\!\bullet\! [\!j\!\circ\! (\!a\!\bullet\! a\!)\!])\! \circ \!a 
\ar[r]^-{\raisebox{8pt}{${}_{1\circ 1 \circ [1\bullet (1\circ \mu)] \circ 1}$}}
\ar[uu]_-{1\circ \kappa} &
% 7.4
a\!\circ\! a^-\! \circ \![\!i\!\bullet \!(\!j\!\circ \!a\!)\!]\! \circ\! a 
\ar[r]^-{\raisebox{8pt}{${}_{1\circ 1 \circ (1\bullet  \theta) \circ 1}$}} 
\ar[uu]_-{1 \circ \kappa} &
% 7.5
a\circ a^- \circ a \ar[r]^-{1\circ \psi} \ar[d]_-{\varphi \circ 1} \ar@{}[rd]|-{\eqref{eq:Lem.7}} &
% 7.6
a\!\circ\! (\!i\! \bullet\! [\!j\!\circ \!(\!a\!\bullet\! a\!)\!]) \ar[d]_-\varrho 
\ar[u]^(.7){1\circ [1\bullet (1\circ \mu)]} \\
% 8.1
\quad ([\!(\! a\!\bullet\! a\!) \!\circ\! j\!] \!\bullet \!i\!)\! \circ \!a^- \!\! \circ\! a \! 
\ar[d]^-{[(\mu \circ 1) \bullet 1] \circ 1 \circ 1} &&&&
% 8.8
([\!(\!a\!\bullet\! a) \!\circ \!j\!]\! \bullet\! i\!)\! \circ \! a \ar[r]^-\lambda \ar[d]_-{[(\mu \circ 1) \bullet 1] \circ 1} &
% 8.6
a\bullet a \bullet a \ar[r]^-{1\bullet \mu} \ar[d]_-{\mu \bullet 1} &
% 8.7
a\bullet a \ar[d]_-\mu \\
% 9.1
[\!(\!a\!\circ\! j\!)\!\bullet\! i\!]\! \circ\! a^- \!\circ\! a 
\ar[r]_-{\raisebox{-8pt}{${}_{1\circ 1 \circ \delta}$}}  &
% 9.2
[\!(\!a\!\circ\! j\!)\!\bullet\! i\!]\! \circ \! a^- \!\circ \!a \!\circ\! a 
\ar[r]_-{\raisebox{-8pt}{${}_{1 \circ \psi \circ 1}$}} &
% 9.3
[\!(\!a\!\circ\! j\!)\!\bullet\! i\!]\! \circ\! (\!i\!\bullet \![\!j\!\circ \!(\!a\!\bullet \!a)\!])\! \circ\! a 
\ar[r]_-{\raisebox{-8pt}{${}_{1 \circ [1\bullet (1\circ \mu)] \circ 1}$}} &
% 9.4
[\!(\!a\!\circ \!j\!)\!\bullet\! i\!]\! \circ\! [\!i\!\bullet \!(\!j\!\circ\! a\!)\!]\! \circ \!a 
\ar[r]_-{\raisebox{-8pt}{${}_{1 \circ (1\bullet \theta) \circ 1}$}} &
% 9.5
[\!(\!a\!\circ \!j)\!\bullet \!i]\!\circ\! a 
\ar[r]_-\lambda &
% 9.6
a\bullet a \ar[r]_-\mu & 
% 9.7
a}$}
\caption{Proof of Proposition \ref{prop:Theta}~(1) --- computation of the upper path}
\label{fig:Theta_upper}
\end{amssidewaysfigure}

\begin{amssidewaysfigure}
\thisfloatpagestyle{empty}
{\color{white} .}
\hspace*{-2.2cm}
\vspace*{2cm}
\centering
\scalebox{1}{$
\xymatrix@C=17pt@R=30pt{
% 1.1
a \circ a \ar[r]^-{1\circ \delta} \ar[d]^-{\sigma \circ 1} &
% 1.2
a \circ a \circ a \ar[r]_-{\sigma \circ 1 \circ 1} \ar@/^3pc/[rrrrr]^-{\Theta \circ 1}&
% 1.3
a^- \circ a \circ a \ar[rr]_-{\psi \circ 1} \ar[dd]^-{(\xi^0 \bullet 1) \circ 1 \circ 1 \circ 1 } &&
% 1.5
(\! i \!\bullet\! [\!j\!\circ \!(\!a\! \bullet \! a\!)\!]) \circ a 
\ar[r]_-{\raisebox{-8pt}{${}_{[1\bullet (1\circ \mu)] \circ 1}$}} 
\ar@{}[rrdd]|(.7){\eqref{eq:Lem.6}} &
% 1.6
[\! i\! \bullet\! (\!j\!\circ\! a\!)\!]\! \circ\! a 
\ar[r]
&
% 1.7
a
\ar[lldd]_-{(\xi^0 \bullet 1) \circ 1}|(.1){(1\bullet \theta) \circ 1\quad }
\ar@{=}[dd] \\
% 2.1
a^- \circ a \ar[dd]^-{(\xi^0 \bullet 1) \circ 1 \circ 1} \\
% 3.3
&& [\!(\!j\!\circ\! j\!)\! \bullet\! i\!]\! \circ\! a^-\! \circ\! a\! \circ\! a 
\ar[ldd]^-{[(\eta \circ 1)\bullet 1] \circ 1 \circ 1 \circ 1} &&
% 3.5
[\!(\!j\!\circ\! j\!)\! \bullet\! i\!]\! \circ\! a \ar[rr]^-\lambda \ar[dd]_-{[(\eta \circ 1)\bullet 1] \circ 1} &&
% 3.7
a \ar[ldd]_-{\eta \bullet 1} \ar@{=}[dd] \\
% 4.1
[\!(\!j\!\circ\! j\!)\! \bullet\! i\!]\! \circ\! a^- \!\!\circ\! a \ar[d]^-{[(\eta \circ 1)\bullet 1] \circ 1 \circ 1 } \\
% 5.1
[\!(\!a\!\circ\! j\!)\!\bullet\! i\!]\! \circ\! a^- \!\circ\! a 
\ar[r]_-{\raisebox{-8pt}{${}_{1\circ 1 \circ \delta}$}}  &
% 5.2
[\!(\!a\!\circ\! j\!)\!\bullet\! i\!]\! \circ \! a^- \!\circ \!a \!\circ\! a 
\ar[r]_-{\raisebox{-8pt}{${}_{1 \circ \psi \circ 1}$}} &
% 5.3
[\!(\!a\!\circ\! j\!)\!\bullet\! i\!]\! \circ\! (\!i\!\bullet \![\!j\!\circ \!(\!a\!\bullet \!a)\!])\! \circ\! a 
\ar[r]_-{\raisebox{-8pt}{${}_{1 \circ [1\bullet (1\circ \mu)] \circ 1}$}} &
% 5.4
[\!(\!a\!\circ \!j\!)\!\bullet\! i\!]\! \circ\! [\!i\!\bullet \!(\!j\!\circ\! a\!)\!]\! \circ \!a 
\ar[r]_-{\raisebox{-8pt}{${}_{1 \circ (1\bullet \theta) \circ 1}$}} &
% 5.5
[\!(\!a\!\circ \!j)\!\bullet \!i]\!\circ\! a 
\ar[r]_-\lambda &
% 5.6
a\bullet a \ar[r]_-\mu & 
% 5.7
a}$}
\caption{Proof of Proposition \ref{prop:Theta}~(1) --- computation of the lower path}
\label{fig:Theta_lower}
\end{amssidewaysfigure}

For a naturally Frobenius map monoidale $M$ in a monoidal bicategory $\mathcal B$, and
the duoidal category $\mathcal B(M,M)$ of Section \ref{sec:B(M,M)}, we may regard left $j$-comodules as Eilenberg-Moore coalgebras of the comonad $j\circ -$ on $\mathcal B(M,M)$ (whose comultiplication is induced by $\xi^0$ and whose counit is induced by $\xi^0_0$). Let us denote their category by 
$\mathcal B(M,M)^{j\circ -}$.
For a comonoid $a$ in $(\mathcal B(M,M),\circ)$, we may regard right $a$-comodules - left $j$-comodules as Eilenberg-Moore coalgebras of the comonad $j\circ - \circ a$ on $\mathcal B(M,M)$ --- equivalently, as Eilenberg-Moore coalgebras of the comonad $- \circ a$ on $\mathcal B(M,M)^{j\circ -}$. We denote their category by $\mathcal B(M,M)^{j\circ - \circ a}$; and we use similar notations when the roles of left and right coactions are interchanged.

The Maschke type theorem below is our second main result.

\begin{theorem} \label{thm:DualMaschke}
Consider a naturally Frobenius map monoidale $M$ in a monoidal bicategory $\mathcal B$. For a Hopf monoid 
$(a,(\mu,\eta),(\delta,\varepsilon),\sigma)$ in the duoidal category $\mathcal B(M,M)$ of Section \ref{sec:B(M,M)}, the following assertions are equivalent.
\begin{itemize}
\item[{(i)}] 
$(a,\delta,\varepsilon)$ is a coseparable comonoid in $(\mathcal B(M,M),\circ,i)$.
\item[{(ii)}] There exists a normalized left cointegral $a\circ j \to j$.
\item[{(iii)}] There exists a normalized right cointegral $j\circ a \to j$.
\item[{(iv)}] The forgetful functor $\mathcal B(M,M)^{a\circ -\circ j}\to \mathcal B(M,M)^{ -\circ j}$ is separable (in the sense of \cite[page 398]{NVdBVO}).
\item[{(v)}] The forgetful functor $\mathcal B(M,M)^{j\circ -\circ a}\to \mathcal B(M,M)^{j\circ -}$ is separable.
\item[{(vi)}] The forgetful functor $\mathcal B(M,M)^{a\circ -\circ j}\to \mathcal B(M,M)^{ -\circ j}$ reflects split monomorphisms.
\item[{(vii)}] The forgetful functor $\mathcal B(M,M)^{j\circ -\circ a}\to \mathcal B(M,M)^{j\circ -}$ reflects split monomorphisms.
\end{itemize}
\end{theorem}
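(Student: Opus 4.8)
The plan is to mirror the proof of Theorem \ref{thm:Maschke}, working throughout with the $\circ$-comonoid $(a,\delta,\varepsilon)$ and its comodule categories in place of the $\bullet$-monoid and its module categories. The heavy computational input is provided by Proposition \ref{prop:Theta} which, as stressed in the Introduction, must be established by its own diagram chase rather than formally dualized from Proposition \ref{prop:Prop6}. Granting it, the equivalence is closed by the cycle
$$\textrm{(i)}\Rightarrow\textrm{(iv)},\textrm{(v)}\Rightarrow\textrm{(vi)},\textrm{(vii)}\Rightarrow\textrm{(ii)},\textrm{(iii)}\Rightarrow\textrm{(i)}.$$

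First I would dispatch the standard implications. For (i)$\Rightarrow$(iv),(v) I would invoke the comodule version of \cite[Paragraph 2.9]{BBW}: coseparability of $(a,\delta,\varepsilon)$ is separability of the monoid $a$ in the opposite category, whence the corresponding forgetful functor of comodules is separable. For (iv)$\Rightarrow$(vi) and (v)$\Rightarrow$(vii) I would use that a separable functor reflects split monomorphisms, the monic counterpart of \cite[Proposition 1.2]{NVdBVO} used in Theorem \ref{thm:Maschke}. Finally (iii)$\Rightarrow$(i) is exactly Proposition \ref{prop:Theta}~(2), and (ii)$\Rightarrow$(i) follows from it by the left--right reversal of Remark \ref{rem:rev_Hopf}, under which a normalized left cointegral in $(\mathcal B(M,M),\bullet,\circ)$ becomes a normalized right cointegral in $(\mathcal B(M,M),\bullet\rev,\circ\rev)$.

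The substance of the theorem is the implication (vi)$\Rightarrow$(ii), dual to (vi)$\Rightarrow$(ii) of Theorem \ref{thm:Maschke}. Here I would consider the coaction $\iota:=(\eta\circ 1).\xi^0:j\to a\circ j$. By the bimonoid axiom $\varepsilon.\eta=\xi^0_0$ together with the counit law of the $\circ$-comonoid $j$, the morphism $\varepsilon\circ 1:a\circ j\to i\circ j\cong j$ satisfies $(\varepsilon\circ 1).\iota=1_j$, so that $\iota$ is a split monomorphism in $\mathcal B(M,M)^{-\circ j}$. Using the remaining bimonoid axioms --- compatibility of $\delta$ with $\eta$ and coassociativity of $\xi^0$ --- one checks that $\iota$ lifts to a morphism in $\mathcal B(M,M)^{a\circ -\circ j}$, the left $a$-comodule structure on $j$ being $\iota$ itself and that on $a\circ j$ being cofree. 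If (vi) holds, the forgetful functor reflects the split monomorphism $\iota$, producing a retraction $\theta:a\circ j\to j$ in $\mathcal B(M,M)^{a\circ -\circ j}$. By construction $\theta$ is a left $a$-comodule, right $j$-comodule morphism with $\theta.\iota=1_j$, which is precisely a normalized left cointegral in the sense of Definition \ref{def:cointegral}: the two comodule compatibilities reproduce the first two diagrams of \eqref{eq:cointegral} and $\theta.\iota=1_j$ is the third. The implication (vii)$\Rightarrow$(iii) is obtained by the same argument with the roles of left and right interchanged.

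The main obstacle lies entirely in Proposition \ref{prop:Theta}, whose proof (Figures \ref{fig:Theta_upper} and \ref{fig:Theta_lower}) realizes the antipode-driven construction of the retraction $\Theta.\delta$ and cannot be obtained by dualizing Proposition \ref{prop:Prop6}, since the $\circ$- and $\bullet$-structures play asymmetric roles. At the level of the theorem itself, the only point requiring care is the verification that $\iota$ is a morphism of left $a$-comodules --- right $j$-comodules and that the reflected retraction matches the cointegral axioms; this is the exact dual of the two small auxiliary diagrams in the proof of Theorem \ref{thm:Maschke}, and involves no new ingredient beyond the bimonoid axioms.
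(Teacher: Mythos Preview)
Your proposal is correct and follows essentially the same approach as the paper's proof: the same cycle (i)$\Rightarrow$(iv),(v)$\Rightarrow$(vi),(vii)$\Rightarrow$(ii),(iii)$\Rightarrow$(i), the same appeal to Proposition~\ref{prop:Theta} for the hard implication, and the same split-monomorphism argument applied to the coaction $(\eta\circ 1).\xi^0$ for (vi)$\Rightarrow$(ii) (the paper writes out the mirror case (vii)$\Rightarrow$(iii) explicitly and declares (vi)$\Rightarrow$(ii) symmetric, but this is an immaterial difference).
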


\begin{proof}
It is well-known that (i)$\Rightarrow$(iv),(v) --- see e.g. \cite[Paragraph 2.9~(2)]{BBW} --- and that (iv)$\Rightarrow$(vi) and (v)$\Rightarrow$(vii) --- see \cite[Proposition 1.2~(1')]{NVdBVO}.
By the bimonoid axiom $\varepsilon.\eta=\xi^0_0$, the top row of
$$
\xymatrix{
j\ar[r]^-{\xi^0} \ar@{=}@/_2pc/[rrdd] &
j \circ j \ar[r]^-{1 \circ \eta} \ar[rd]_-{1 \circ \xi^0_0} &
j\circ a \ar[d]^-{1 \circ \varepsilon} \\
&& j\circ i \ar@{=}[d] \\
&& j}
$$
is a monomorphism of left $j$-comodules split by the right column. 
It is a morphism of right $a$-comodules by the commutativity of 
$$
\xymatrix@C=40pt{
j \ar[r]^-{\xi^0} \ar[d]_-{\xi^0} &
j \circ j \ar[r]^-{1\circ \eta} \ar[d]^-{1\circ \xi^0} &
j \circ a \ar[dd]^-{1\circ \delta} \\
j \circ j \ar[r]^-{\xi^0 \circ 1} \ar[d]_-{1\circ \eta} &
j \circ j \circ j  \ar[d]^-{1\circ 1\circ  \eta} \\
j \circ a \ar[r]_-{\xi^0 \circ 1} &
j \circ j \circ a \ar[r]_-{1\circ \eta \circ 1} &
j \circ a \circ a}
$$
whose rectangle on the right commutes by a bimonoid axiom.
Thus if (vii) holds then it is a split monomorphism of right $a$-comodules - left $j$-comodules as well whence its retraction is a normalized right cointegral. This proves (vii)$\Rightarrow$(iii).
The implication (vi)$\Rightarrow$(ii) follows symmetrically.
We infer from Proposition \ref{prop:Theta} that (iii)$\Rightarrow$(i) and (ii)$\Rightarrow$(i) follows symmetrically.
\end{proof}

\section{Applications} \label{sec:applications}

\subsection{Hopf monoids in braided monoidal categories}
Any monoidal category $(\mathsf C,\otimes,k)$ can be regarded as a bicategory with a single object, the 1-cells provided by the objects of $\mathcal C$ and the 2-cells provided by the morphisms of $\mathsf C$. The vertical composition is the composition in $\mathsf C$ while the horizontal composition is the monoidal product $\otimes$.

If furthermore the monoidal category $(\mathsf C,\otimes,k)$ is braided, then the reverse of $\otimes$ renders the above bicategory monoidal.
Its single object is a trivial naturally Frobenius map monoidale, hence its endohom category $\mathsf C$ possesses a duoidal structure as in Section \ref{sec:B(M,M)}. Both monoidal structures turn out to be $(\otimes,k)$ with compatibility morphism $\xi$ determined by the braiding; and the other compatibility morphisms $\xi^0$, $\xi_0$ and $\xi^0_0$ given by the unitality natural isomorphisms, as in \cite[Section 6.3]{AguiarMahajan}. 

As the bimonoids (respectively, Hopf monoids) in this duoidal category we re-obtain the usual notion of bimonoids (respectively, Hopf monoids) in the braided monoidal category $(\mathsf C,\otimes,k)$; see e.g. \cite[pages 113-114]{majid}.

A left (or right) integral in the sense of Definition \ref{def:integral} for a bimonoid $(a,(\mu,\eta),(\delta,\varepsilon))$ in a braided monoidal category $(\mathsf C,\otimes,k)$ --- regarded as a duoidal category --- is just a left (or right) $a$-module morphism $k \to a$. It is normalized in the sense of Definition \ref{def:integral} if it is a section of the counit $\varepsilon$.

Applying Theorem  \ref{thm:Maschke} to the above situation we obtain the following.

\begin{theorem}
For a Hopf monoid $(a,(\mu,\eta),(\delta,\varepsilon),\sigma)$ in a braided monoidal category $(\mathsf C,\otimes,k)$, the following assertions are equivalent.
\begin{itemize}
\item[{(i)}] $(a,\mu,\eta)$ is a separable monoid in $(\mathsf C,\otimes,k)$.
\item[{(ii)}] $a$ admits a normalized left integral, that is, a left $a$-module section of $\varepsilon$.
\item[{(iii)}] $a$ admits a normalized right integral, that is, a right $a$-module section of $\varepsilon$.
\end{itemize}
\end{theorem}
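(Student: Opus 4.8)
The plan is to deduce the statement from the general Maschke theorem (Theorem \ref{thm:Maschke}) by checking that, for the trivial naturally Frobenius map monoidale on the single object of the bicategory built from $(\mathsf C,\otimes,k)$, the abstract notions occurring in Theorem \ref{thm:Maschke} specialize to the concrete ones stated here. Since this is an application, no new computation is needed; the work lies entirely in tracing through the identifications.

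First I would make explicit the identifications indicated in the discussion preceding the statement. The endohom category $\mathcal B(M,M)$ is $\mathsf C$ itself, and both the $\circ$- and the $\bullet$-monoidal structures are $(\otimes,k)$; in particular the two units coincide with the monoidal unit, $i=j=k$. Consequently $a\bullet i=a\otimes k\cong a$ and $i\bullet a=k\otimes a\cong a$, and the structure maps $\xi_0,\xi^0_0$ are the unit coherence isomorphisms.

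Next I would trace Definition \ref{def:integral} through these identifications. Because $i=k$ is the monoidal unit, the right $i$-module axiom (the middle diagram of \eqref{eq:int}) becomes a commuting square of coherence isomorphisms and imposes no condition, so a left integral $\theta\colon i\to a\bullet i$ is precisely a left $a$-module morphism $k\to a$. The normalization diagram (the rightmost of \eqref{eq:int}) reduces to $\theta$ being a section of $\varepsilon$. Symmetrically, a normalized right integral becomes a right $a$-module section of $\varepsilon$. This matches assertions (ii) and (iii), while separability of $(a,\mu,\eta)$ in $(\mathcal B(M,M),\bullet,j)=(\mathsf C,\otimes,k)$ is literally assertion (i).

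With these identifications in place, the equivalences (i)$\Leftrightarrow$(ii)$\Leftrightarrow$(iii) are exactly the restriction of the equivalences (i)$\Leftrightarrow$(ii)$\Leftrightarrow$(iii) of Theorem \ref{thm:Maschke} to the present situation, and the conclusion follows. The only point demanding care is confirming that the $i$-module and normalization diagrams genuinely trivialize once $i=j=k$, so that the abstract integrals really do reduce to module sections of $\varepsilon$; I expect this verification to be the main (and only) obstacle, and it is routine given the explicit form of the duoidal structure on $\mathsf C$.
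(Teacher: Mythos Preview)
Your proposal is correct and follows essentially the same approach as the paper: both obtain the result as a direct specialization of Theorem~\ref{thm:Maschke}, after identifying the duoidal endohom category with $(\mathsf C,\otimes,k)$ (so that $i=j=k$ and the $i$-module condition trivializes) and observing that the abstract (normalized) integrals of Definition~\ref{def:integral} reduce to (normalized) $a$-module sections of $\varepsilon$.
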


Application of Theorem  \ref{thm:DualMaschke} to a Hopf monoid $(a,(\mu,\eta),(\delta,\varepsilon),\sigma)$ in a braided monoidal category $(\mathsf C,\otimes,k)$ yields nothing new in this case but the same as the application of 
Theorem  \ref{thm:Maschke} to $(a,(\mu,\eta),(\delta,\varepsilon),\sigma)$ regarded as a Hopf monoid in the opposite of the category $\mathsf C$.

\subsection{Weak Hopf algebras}
A {\em weak bialgebra} in \cite[Definition 2.1]{BNSz}, over a field $k$, is a vector space $A$ equipped with an algebra structure $(\mu,\nu)$ and a coalgebra structure $(\Delta,\epsilon)$ subject to the axioms formulated as the commutativity of the diagrams below.
$$
\xymatrix@C=45pt{
A\otimes A \ar[d]_-\mu \ar[r]^-{\Delta \otimes \Delta} &
A\! \otimes \! A \!  \otimes \!  A \!  \otimes \!  A \ar[r]^-{1\otimes \mathsf{flip} \otimes 1} &
A\! \otimes \! A \!  \otimes \!  A \!  \otimes \!  A \ar[d]^--{{ \mu} \otimes {\mu}}   \\
A \ar[rr]_-\Delta &&
A \otimes A}
$$
where $\mathsf{flip}:A\otimes A \to A\otimes A$ is the flip map $a\otimes b \mapsto b\otimes a$, and
$$
\xymatrix{
k\ar[rr]^-{\nu  \otimes \nu}
\ar[d]^-{\nu \otimes \nu} 
\ar@/^.5pc/[rd]^-{ \nu} &&
A\otimes A\ar[d]_-{\Delta \otimes \Delta}
\\
A\otimes A\ar[d]^-{ \Delta \otimes \Delta}&
A\ar@/_.5pc/[rd]_-{\Delta^2}&
A\! \otimes \! A \!  \otimes \!  A \!  \otimes \!  A \ar[d]_(.3){ 1{\scalebox{.5}{$\otimes$}}
\mu\scalebox{.5}{$\otimes$}  1}
\\
A\! \otimes \! A \!  \otimes \!  A \!  \otimes \!  A\ar[rr]_-{ 1\otimes \mu^{\mathsf{op}}\otimes 1}&&
A\otimes A \otimes A}
\quad 
\xymatrix{
A\otimes A \otimes A \ar[rr]^-{ 1\otimes \Delta^{\mathsf{op}}\otimes 1}
\ar[d]^(.6){ 1\scalebox{.5}{$\otimes$}\Delta\scalebox{.5}{$\otimes$}1}
\ar@/^.5pc/[rd]^-{ \mu^2}&&
A\! \otimes \! A \!  \otimes \!  A \!  \otimes \!  A\ar[d]_-{{ \mu} \otimes { \mu}}\\
A\! \otimes \! A \!  \otimes \!  A \!  \otimes \!  A\ar[d]^-{{ \mu} \otimes {\mu}}&
A\ar@/_.5pc/[rd]_-{ \epsilon}&
A\otimes A \ar[d]_-{{ \epsilon} \otimes { \epsilon}}\\
A\otimes A \ar[rr]_-{{ \epsilon} \otimes { \epsilon}}&&
k}
$$
where $\mu^2:\mu.(\mu \otimes 1)=\mu.(1 \otimes \mu)$ and
$\Delta^2:=(\Delta \otimes 1).\Delta=(1 \otimes \Delta).\Delta$,
$\mu\op:=\mu.\mathsf{flip}$ and $\Delta\op:=\mathsf{flip}.\Delta$.

While the first diagram requires the multiplicativity of the comultiplication in the usual sense of a proper bialgebra, unitality of the comultiplication; that is, $\Delta.\nu=\nu\otimes \nu$ is {\em not} required.

Essential roles are played by the following --- in fact idempotent --- linear maps.
$$
\begin{array}{rcl}
\sqcap^R &=& (\xymatrix{
A \ar[r]^-{\nu \otimes 1} &
A \otimes A \ar[r]^-{\Delta \otimes 1} &
A\otimes A \otimes A \ar[r]^-{1\otimes \mu^{\mathsf{op}}} &
A\otimes A \ar[r]^-{1\otimes \epsilon} &
A}) \\
\overline \sqcap^R &=& (\xymatrix{
A \ar[r]^-{\nu \otimes 1} &
A \otimes A \ar[r]^-{\Delta \otimes 1} &
A\otimes A \otimes A \ar[r]^-{1\otimes \mu} &
A\otimes A \ar[r]^-{1\otimes \epsilon} &
A}) \\
\sqcap^L &=& (\xymatrix{
A \ar[r]^-{1 \otimes \nu} &
A \otimes A \ar[r]^-{1 \otimes \Delta} &
A\otimes A \otimes A \ar[r]^-{\mu^{\mathsf{op}} \otimes 1} &
A\otimes A \ar[r]^-{\epsilon \otimes 1} &
A}) \\
\overline \sqcap^L &=& (\xymatrix{
A \ar[r]^-{1 \otimes \nu} &
A \otimes A \ar[r]^-{1 \otimes \Delta} &
A\otimes A \otimes A \ar[r]^-{\mu \otimes 1} &
A\otimes A \ar[r]^-{\epsilon \otimes 1} &
A}).
\end{array}
$$
The coinciding images of $\sqcap^R$ and $\overline \sqcap^R$ form a subalgebra of $A$ that we term the {\em base algebra}. Also the coinciding images of $\sqcap^L$ and $\overline \sqcap^L$ form a subalgebra of $A$ whose elements commute with the elements of the base algebra. The pair $\sqcap^R$ and $\overline \sqcap^L$, and also the pair $\sqcap^L$ and $\overline \sqcap^R$ restrict to mutually inverse anti-isomorphisms between these commuting subalgebras of $A$.
For more details we refer to \cite{BNSz}.

A {\em weak Hopf algebra} is a weak bialgebra $(A,(\mu,\nu),(\Delta,\epsilon))$ which admits a further linear map $\sigma:A\to A$ --- the so-called {\em antipode} --- rendering commutative the following diagrams.
$$
\xymatrix{
A \otimes A \ar[r]^-{1\otimes \sigma} &
A \otimes A \ar[d]^-\mu \\
A \ar[u]^-\Delta \ar[r]_-{\sqcap^L} &
A}
\qquad \qquad 
\xymatrix{
A \otimes A \ar[r]^-{\sigma \otimes 1} &
A \otimes A \ar[d]^-\mu \\
A \ar[u]^-\Delta \ar[r]_-{\sqcap^R} &
A}
$$
Whenever the antipode exists, it is unique, and it is an algebra anti-homomorphism as well as a coalgebra anti-homomorphism.

On elements $h$ of $A$, the Sweedler type index notation $\Delta(h)=h_1 \otimes h_2$ will be used, where implicit summation is understood. 
The multiplication is denoted by juxtaposition of elements.
The image of the number $1$ in $k$ under $\nu$ is also denoted by $1\in A$.

A {\em separable algebra} over a field $k$ is a monoid $R$ equipped with a separability structure in the monoidal category $\mathsf{vec}$ of $k$-vector spaces. In this case the separability structure can be given by the so-called {\em separability element} $\sum_i e_i \otimes f_i \in R \otimes R$ which is the image of the unit element of $R$ under the bilinear section of the multiplication. Then, by its bilinearity, the section sends any $r\in R$ to $\sum_i re_i \otimes f_i = \sum_i e_i \otimes f_i r$. Since it is a section of the multiplication, $\sum_i e_i f_i=1$. A {\em Frobenius-separability structure} on an algebra $R$ is a separability structure admitting a (necessarily unique) linear map $\psi:R \to k$ --- the {\em Frobenius functional} --- such that $\sum_i \psi(e_i)f_i=1= \sum_i e_i\psi(f_i)$.
For example, the base algebra $R$ of a weak bialgebra possesses a Frobenius-separable structure. The separability element is $1_1 \otimes \sqcap^R(1_2)$ and the Frobenius functional is the restriction of the counit to the base algebra. The {\em enveloping algebra} $R\otimes R^{\mathsf{op}}$ of a Frobenius-separable algebra $R$ is again Frobenius-separable via the obvious factorwise structure.

For {\em any} algebra $R$, a monoid in the monoidal category $\mathsf{bim}(R)$ of $R$-bimodules can be characterized, equivalently, as an algebra $A$ together with an algebra homomorphism $\eta:R \to A$. If $R$ is a separable algebra, then any separability structure on a monoid $(A,\eta)$ in $\mathsf{bim}(R)$
determines a separability structure on the algebra $A$ (as a monoid in $\mathsf{vec}$). Conversely, also any separability structure on the algebra $A$ determines a separability structure on the corresponding monoid $(A,\eta)$ in $\mathsf{bim}(R)$ for any algebra homomorphism $\eta:R \to A$ (although the correspondence is not bijective in general).

It was shown in \cite{B-GT-LC} that for any Frobenius-separable algebra $R$ the category $\mathsf{bim}(R\otimes R^{\mathsf{op}})$ of $R\otimes R^{\mathsf{op}}$-bimodules possesses a duoidal structure. In terms of a Frobenius-separability element $\sum_i e_i \otimes f_i\in R \otimes R$, the $\bullet$-monoidal product of any $R\otimes R^{\mathsf{op}}$-bimodules $M$ and $N$ lives on the subspace $\sum_{i,j} M(e_i\otimes f_j) \otimes (f_i \otimes e_j)N$ of the vector space $M\otimes N$. The $\bullet$-monoidal unit $j$ is $R\otimes R^{\mathsf{op}}$ with the actions provided by the multiplication:
\begin{equation} \label{eq:j_Rebim}
(x\otimes y)(p\otimes q)(x'\otimes y')=xpx'\otimes y'qy.
\end{equation}
The $\circ$-monoidal product of any $R\otimes R^{\mathsf{op}}$-bimodules $M$ and $N$ lives on the subspace $\sum_{i,j} (e_i \otimes 1)M(e_j\otimes 1) \otimes (1 \otimes f_i)N(1\otimes f_j)$ of the vector space $M\otimes N$. The $\circ$-monoidal unit $i$ also lives on $R\otimes R^{\mathsf{op}}$; at this time the actions are given in terms of the Frobenius functional $\psi:R \to k$ as
\begin{equation} \label{eq:i_Rebim}
(x\otimes y)(p\otimes q)(x'\otimes y')= y'px'\otimes xq \sum_i\psi(yf_i)e_i.
\end{equation}
As explained in \cite[Section 5.3]{BohmLack}, this duoidal category $\mathsf{bim}(R\otimes R^{\mathsf{op}})$ arises in the way described in Section \ref{sec:B(M,M)}. That is, it is the endohom category of a naturally Frobenius map monoidale $R\otimes R^{\mathsf{op}}$ in the monoidal bicategory of algebras, bimodules and bimodule maps.

The bimonoids in this duoidal category $\mathsf{bim}(R\otimes R^{\mathsf{op}})$ were identified in \cite{B-GT-LC} with the weak bialgebras whose base algebra is isomorphic to $R$. By \cite[Section 8.4]{BohmLack}, a weak bialgebra is a weak Hopf algebra if and only if the the corresponding bimonoid in $\mathsf{bim}(R\otimes R^{\mathsf{op}})$ is a Hopf monoid.

In a bit more detail, the bimonoid $a$ in $\mathsf{bim}(R\otimes R^{\mathsf{op}})$, associated to a weak bialgebra $(A,(\mu,\nu),(\Delta, \epsilon))$ with the base algebra $R$ (identified with $\sqcap^R(A)\subseteq A$), lives on the $R\otimes R^{\mathsf{op}}$-bimodule $A$ with the actions
\begin{equation} \label{eq:A_Rebim}
(x\otimes y)h(x'\otimes y') =x \overline \sqcap^L(y)h x' \overline \sqcap^L(y').
\end{equation}
The unit of the $\bullet$-monoid $a$ is 
\begin{equation} \label{eq:wba_unit}
\eta: j \to a, \qquad
p\otimes q \mapsto p \overline \sqcap^L(q).
\end{equation}
The multiplication occurs in the factorization of the algebra multiplication $\mu:A\otimes A \to A$ via the canonical epimorphism $A \otimes A \twoheadrightarrow A \bullet A$:
$$
\xymatrix{
A \otimes A \ar[rr]^-\mu \ar@{->>}[rd] && A \\
& A\bullet A \ar@{-->}[ru]}
$$
The counit of the $\circ$-comonoid $a$ is the map
\begin{equation} \label{eq:wba_counit}
\varepsilon: A \to i, \qquad
h \mapsto \sqcap^R(h_1) \otimes \overline \sqcap^R(h_2).
\end{equation}
The comultiplication is the composite of  the coalgebra comultiplication $\Delta:A \to A \otimes A$ and the canonical epimorphism $A \otimes A \twoheadrightarrow A \circ A$:
\begin{equation} \label{eq:wba_coprod}
\delta=(\xymatrix{
A \ar[r]^-\Delta &
A \otimes A \ar@{->>}[r] &
A \circ A}).
\end{equation}
If $A$ is a weak Hopf algebra, then the antipode of the Hopf monoid $a$ in $\mathsf{bim}(R\otimes R^{\mathsf{op}})$ is the same map $A\to A$ as the antipode of the weak Hopf algebra $A$.

After all these considerations, from Theorem \ref{thm:Maschke} we re-obtain \cite[Theorem 3.13]{BNSz} as follows.
\begin{theorem} \label{thm:wha_int}
For a weak Hopf algebra $A$ over a field $k$, with base algebra $R$, the following assertions are equivalent.
\begin{itemize}
\item[{(i')}]
$A$ is a separable $k$-algebra.
\item[{(i)}]
The algebra $A$ and the algebra homomorphism \eqref{eq:wba_unit} describe a separable monoid in $(\mathsf{bim}(R\otimes R^{\mathsf{op}}),\bullet,j)$.
\item[{(ii')}]
There is a normalized left integral in the sense of \cite[Definition 3.1]{BNSz}. That is, an element $t'$ of $A$ satisfying the following conditions.
\begin{itemize}
\item[$\bullet$] $ht'=\sqcap^L(h)t'$ in $A$, for all $h\in A$.
\item[$\bullet$] $\overline \sqcap^R(t')=1$ in $R$.
\end{itemize}
\item[{(ii)}] Regarding $A$ as a bimonoid $a$ in $\mathsf{bim}(R\otimes R^{\mathsf{op}})$, it possesses a normalized left integral in the sense of Definition \ref{def:integral}. That is, there exists an element $t$ of $A$ satisfying the conditions of part (ii') together with the further condition
\begin{itemize}
\item[$\bullet$] $t\sqcap^L(x)=t\, \overline \sqcap^R\overline \sqcap^L(x)$ in $A$, for all $x\in R$.
\end{itemize}
\item[{(iii')}]
There is a normalized right integral in the sense of \cite[Definition 3.1]{BNSz}. That is, an element $t'$ of $A$ satisfying the following conditions.
\begin{itemize}
\item[$\bullet$] $t'h=t'\sqcap^R(h)$ in $A$, for all $h\in A$.
\item[$\bullet$] $ \sqcap^R(t')=1$ in $R$.
\end{itemize}
\item[{(iii)}] Regarding $A$ as a bimonoid $a$ in $\mathsf{bim}(R\otimes R^{\mathsf{op}})$, it possesses a normalized right integral in the sense of Definition \ref{def:integral}. That is, there exists an element $t$ of $A$ satisfying the conditions of part (iii') together with the further condition
\begin{itemize}
\item[$\bullet$] $\overline \sqcap^L(x) t=\sqcap^R\sqcap^L(x)t$ in $A$, for all $x\in R$.
\end{itemize}
\end{itemize}
\end{theorem}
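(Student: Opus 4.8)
The plan is to do no new hard work on the unprimed conditions and instead to set up a dictionary to the conditions of \cite{BNSz}. First I would recall from \cite{B-GT-LC} and \cite[Section 8.4]{BohmLack} that the data \eqref{eq:A_Rebim}--\eqref{eq:wba_coprod}, together with the antipode of $A$, form a Hopf monoid $a$ in the duoidal category $\mathsf{bim}(R\otimes R^{\mathsf{op}})$, and that this category is the endohom category $\mathcal B(M,M)$ of the naturally Frobenius map monoidale $M=R\otimes R^{\mathsf{op}}$ in the bicategory of algebras, bimodules and bimodule maps. Theorem \ref{thm:Maschke} then applies directly and yields the equivalence of the three unprimed conditions (i), (ii), (iii) with no further computation. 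Everything that remains is to prove (i)$\Leftrightarrow$(i'), (ii)$\Leftrightarrow$(ii') and (iii)$\Leftrightarrow$(iii').

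For (i)$\Leftrightarrow$(i') I would use that the base algebra $R$, and hence $R\otimes R^{\mathsf{op}}$, is Frobenius-separable. By construction the $\bullet$-multiplication of $a$ is the factorization of the ordinary algebra multiplication $\mu\colon A\otimes A\to A$ through the canonical epimorphism $A\otimes A\twoheadrightarrow A\bullet A$, where $A\bullet A$ is the image of the idempotent on $A\otimes_k A$ cut out by the Frobenius-separability element of $R\otimes R^{\mathsf{op}}$. Using the $\mathsf{bim}(S)$-versus-$\mathsf{vec}$ correspondence recalled just before the theorem, with $S=R\otimes R^{\mathsf{op}}$, the existence of a $\bullet$-bimodule section of this multiplication is equivalent to the existence of a separability element of $A$ in $\mathsf{vec}$. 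The one point needing care is that the special product $\bullet$, rather than the plain tensor over $S$, is what computes separability here; this is guaranteed by the idempotent description of $A\bullet A$.

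The substance of the proof is the unwinding (ii)$\Leftrightarrow$(ii') (and, symmetrically, (iii)$\Leftrightarrow$(iii')). A morphism $\theta\colon i\to a\bullet i$ is determined by the single element $t\in A$ it selects, and I would evaluate the three diagrams of \eqref{eq:int} on $t$ using the explicit descriptions \eqref{eq:i_Rebim} of $i$, \eqref{eq:j_Rebim} of $j$, and \eqref{eq:A_Rebim}, \eqref{eq:wba_unit}, \eqref{eq:wba_counit}. The left $a$-module axiom should reproduce $ht=\sqcap^L(h)t$; the normalization (third diagram) should give $\overline\sqcap^R(t)=1$; and the right $i$-module axiom (second diagram), fed by the nontrivial right action of \eqref{eq:i_Rebim}, should unfold into precisely the extra identity $t\sqcap^L(x)=t\,\overline\sqcap^R\overline\sqcap^L(x)$. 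Thus Definition \ref{def:integral} translates verbatim into the list displayed in (ii), so that (ii)$\Rightarrow$(ii') is a tautology. Converting \eqref{eq:wba_counit} and \eqref{eq:wba_coprod} into these element identities is where the bookkeeping with the four maps $\sqcap^{L},\sqcap^{R},\overline\sqcap^{L},\overline\sqcap^{R}$ and their mutually inverse anti-isomorphism relations between the two commuting subalgebras of $A$ must be handled carefully.

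The implication (ii')$\Rightarrow$(ii) is the real obstacle, since the right $i$-linearity that produces the extra identity is automatic and invisible in the braided case---where $i$ is the trivial unit---but is a genuine constraint here, carried by the nontrivial unit \eqref{eq:i_Rebim}; note moreover that $\sqcap^L(x)$ lies in one counital subalgebra while $\overline\sqcap^R\overline\sqcap^L(x)$ lies in the other, so the identity is not formal. Rather than verify it on an arbitrary \cite{BNSz}-integral, I would close the loop through separability: a normalized left integral in the sense of \cite{BNSz} produces, by the classical averaging construction (the antipode applied to one leg of the comultiplication of $t$), a separability element of $A$, hence (ii')$\Rightarrow$(i'); and then (i')$\Rightarrow$(i)$\Rightarrow$(ii), already in hand, yields an integral satisfying all of the conditions of (ii), including the extra one. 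The symmetric argument settles (iii')$\Rightarrow$(iii). I would nonetheless first check whether the extra identity follows directly from the $\sqcap$-map relations together with the left-integral property, since if it does the whole theorem becomes a pure translation with no computation outside the framework.
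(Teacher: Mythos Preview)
Your proposal is correct and follows essentially the same route as the paper: the unprimed equivalences come straight from Theorem~\ref{thm:Maschke}, the step (i)$\Leftrightarrow$(i') uses that $R$ (hence $R\otimes R^{\mathsf{op}}$) is separable, and the remaining primed equivalences are handled via the classical \cite{BNSz} result. The paper is merely terser, invoking \cite[Theorem~3.13]{BNSz} for (i')$\Leftrightarrow$(ii')$\Leftrightarrow$(iii') rather than rehearsing the averaging construction; and to your closing question, the paper notes (in the remark following the proof) that the extra identity is \emph{not} automatic for an arbitrary \cite{BNSz}-integral $t'$, but that $t'\,1_1\,\sqcap^L\sqcap^R(1_2)$ does satisfy it.
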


\begin{proof}
We have (i) $\Leftrightarrow$ (ii) $\Leftrightarrow$ (iii) by Theorem \ref{thm:Maschke}. Since $R$ is a separable $k$-algebra --- see \cite[Proposition 2.11]{BNSz} --- so is $R\otimes R^{\mathsf{op}}$. Therefore (i) $\Leftrightarrow$ (i'). 
Finally, (i') $\Leftrightarrow$ (ii') $\Leftrightarrow$ (iii') by \cite[Theorem 3.13]{BNSz} and its symmetric counterpart.
\end{proof}

Note that any integral $t$ in part (ii) of Theorem \ref{thm:wha_int} can be used as $t'$ in part (ii'). Conversely, if $t'$ is an integral in part (ii'), then $t'1_1 \sqcap^L\sqcap^R(1_2)$ is a suitable $t$ in part (ii).
Similarly, any integral $t$ in part (iii) can be used as $t'$ in part (iii'). Conversely, if $t'$ is an integral in part (iii'), then $\sqcap^R\sqcap^L(1_1)1_2t'$ is a suitable $t$ in part (iii).

Analogously, from Theorem \ref{thm:DualMaschke} we obtain the following.
\begin{theorem} \label{thm:wha_coint}
For a weak Hopf algebra $(A,(\mu,\nu),(\Delta,\epsilon),\sigma)$ over a field $k$, with base algebra $R$, the following assertions are equivalent.
\begin{itemize}
\item[{(i')}]
$(A,\Delta,\epsilon)$ is a coseparable $k$-coalgebra.
\item[{(i)}]
$(A,\delta, \varepsilon)$ of \eqref{eq:wba_coprod} and \eqref{eq:wba_counit} is a coseparable comonoid in $(\mathsf{bim}(R\otimes R^{\mathsf{op}}),\circ, i)$.
\item[{(ii')}]
There is a normalized left cointegral in the sense of a linear map $\tau':A \to k$ satisfying the following conditions.
\begin{itemize}
\item[$\bullet$] $h_1 \tau'(h_2)=\sqcap^L(h_1)\tau'(h_2)$ in $A$, for all $h\in A$.
\item[$\bullet$] $\tau'\, . \sqcap^L=\epsilon$.
\end{itemize}
\item[{(ii)}] Regarding $A$ as a bimonoid $a$ in $\mathsf{bim}(R\otimes R^{\mathsf{op}})$, it possesses a normalized left cointegral in the sense of Definition \ref{def:cointegral}. That is, there exists a linear map $\tau:A\to k$ satisfying the conditions of part (ii') together with the further condition
\begin{itemize}
\item[$\bullet$] $\tau(xh)=\tau(h\sqcap^R\sqcap^L(x))$, for all $x\in R$ and $h\in A$.
\end{itemize}
\item[{(iii')}]
There is a normalized right cointegral in the sense of a linear map $\tau':A \to k$ satisfying the following conditions.
\begin{itemize}
\item[$\bullet$] $\tau'(h_1)h_2=\tau'(h_1)\sqcap^R(h_2)$ in $A$, for all $h\in A$.
\item[$\bullet$] $\tau'. \sqcap^R=\epsilon$.
\end{itemize}
\item[{(iii)}] Regarding $A$ as a bimonoid $a$ in $\mathsf{bim}(R\otimes R^{\mathsf{op}})$, it possesses a normalized right cointegral in the sense of Definition \ref{def:cointegral}. That is, there exists a linear map $\tau:A \to k$ satisfying the conditions of part (iii') together with the further condition
\begin{itemize}
\item[$\bullet$] $\tau(h\overline \sqcap^L(x))=\tau(\sqcap^L(x)h)$, for all $x\in R$ and $h\in A$.
\end{itemize}
\end{itemize}
\end{theorem}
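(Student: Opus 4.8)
The plan is to reproduce, in coseparable form, the three-step proof of Theorem~\ref{thm:wha_int}. Because Section~\ref{sec:DualMaschke} was arranged so that its statements are not formal duals of those in Section~\ref{sec:Maschke}, each step has to be carried out on the comonoid side in its own right, but the overall shape of the argument is the same: an application of the general Maschke theorem, a transfer of coseparability across the base, and a reduction to the classical weak Hopf algebra description.

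First I would read off (i)~$\Leftrightarrow$~(ii)~$\Leftrightarrow$~(iii) from Theorem~\ref{thm:DualMaschke}, applied to the Hopf monoid $a$ in $\mathsf{bim}(R\otimes R^{\mathsf{op}})$ attached to $A$ by the actions~\eqref{eq:A_Rebim}, the counit~\eqref{eq:wba_counit} and the comultiplication~\eqref{eq:wba_coprod}. The substance of this step is to unwind Definition~\ref{def:cointegral} in the concrete model: a morphism $\theta\colon a\circ j\to j$ is a linear functional $A\to k$, and feeding the explicit actions~\eqref{eq:A_Rebim} and~\eqref{eq:j_Rebim} together with~\eqref{eq:wba_counit} into the left $a$-comodule, right $j$-comodule and normalization conditions of~\eqref{eq:cointegral} should reproduce verbatim the element-wise conditions of (ii) and (iii). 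I expect this translation---in particular the emergence of the extra bullets involving $\sqcap^{R}\sqcap^{L}$ and $\overline\sqcap^{L}$---to be the most laborious part of the whole proof.

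Next I would prove (i)~$\Leftrightarrow$~(i'). The crucial point is that Frobenius-separability of $R$, hence of $R\otimes R^{\mathsf{op}}$, makes the $\circ$-unit $i$ of~\eqref{eq:i_Rebim} a coseparable $k$-coalgebra: the comultiplication built from the Frobenius-separability element is retracted, as an $i$-bicomodule map, by the multiplication of $R\otimes R^{\mathsf{op}}$. With this base the $\circ$-product is the cotensor product over $i$, so a comonoid in $(\mathsf{bim}(R\otimes R^{\mathsf{op}}),\circ,i)$ is precisely a $k$-coalgebra $A$ together with a coalgebra map $A\to i$. Dually to the separability transfer used in the proof of Theorem~\ref{thm:wha_int}, coseparability of $i$ then matches coseparability structures on the comonoid $(a,\delta,\varepsilon)$ with those on the $k$-coalgebra $(A,\Delta,\epsilon)$, giving (i)~$\Leftrightarrow$~(i').

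Finally, (i')~$\Leftrightarrow$~(ii')~$\Leftrightarrow$~(iii') is the cointegral analogue of \cite[Theorem~3.13]{BNSz}. In the finite-dimensional setting it follows at once by applying Theorem~\ref{thm:wha_int} (or \cite[Theorem~3.13]{BNSz} itself) to the dual weak Hopf algebra $A^{*}$, a coseparable coalgebra $A$ corresponding to a separable algebra $A^{*}$ and normalized left (respectively right) cointegrals on $A$ to normalized left (respectively right) integrals on $A^{*}$; in general one reverses every arrow in the proof of \cite[Theorem~3.13]{BNSz}. To close the loop between the two formulations I would finish, as in the remark after Theorem~\ref{thm:wha_int}, by exhibiting how a cointegral meeting only the conditions of (ii')/(iii') can be corrected into one satisfying the additional $\sqcap$-compatibility of (ii)/(iii), so that all six assertions are seen to be equivalent.
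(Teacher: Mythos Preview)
Your proposal is correct and matches the paper's approach. The paper gives no explicit proof for this theorem, introducing it only with ``Analogously, from Theorem~\ref{thm:DualMaschke} we obtain the following'', so the intended argument is precisely the three-step dualization of the proof of Theorem~\ref{thm:wha_int} that you outline; the remark immediately following the theorem in the paper also records the explicit correction formulas $\tau'(1_1\,-\,\sqcap^R(1_2))$ and $\tau'(\sqcap^L(1_1)\,-\,1_2)$ converting a $\tau'$ of (ii')/(iii') into a $\tau$ of (ii)/(iii), exactly as you anticipate in your final paragraph.
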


Note that any cointegral $\tau$ in part (ii) of Theorem \ref{thm:wha_coint} can be used as $\tau'$ in part (ii'). Conversely, if $\tau'$ is a cointegral in part (ii'), then $\tau'(1_1-\sqcap^R(1_2))$ is a suitable $\tau$ in part (ii).
Similarly, any cointegral $\tau$ in part (iii) can be used as $\tau'$ in part (iii'). Conversely, if $\tau'$ is a cointegral in part (iii'), then $\tau'(\sqcap^L(1_1)-1_2)$ is a suitable $\tau$ in part (iii).

\subsection{Hopf algebroids over central base algebras}

In \cite[Example 6.18]{AguiarMahajan} the category $\mathsf{bim}(R)$ of bimodules of a {\em commutative} algebra $R$ (say, over a field $k$) was shown to carry a duoidal structure as follows. 
The $\circ$-monoidal product is the usual $R$-bimodule tensor product provided by the coequalizer
$$
\xymatrix{
M \otimes R \otimes N  
\ar@<2pt>[rrr]^-{\textrm{right $R$-action }\otimes 1} 
\ar@<-2pt>[rrr]_-{1\otimes \textrm{ left $R$-action}} &&&
M \otimes N \ar[r] &
M \circ N}
$$
for any $R$-bimodules $M$ and $N$ and for the tensor product $\otimes$ of vector spaces. Thus the $\circ$-monoidal unit $i$ is $R$ with equal left and right actions provided by the multiplication.
Since $R$ is commutative, $R$-bimodules can be identified with modules over the commutative algebra $R\otimes R$. The $\bullet$-monoidal product of $R$-bimodules $M$ and $N$ is their $R\otimes R$-module tensor product occurring in the coequalizer
$$
\xymatrix{
M \otimes R \otimes R \otimes N  
\ar@<2pt>[rrr]^-{\textrm{$(R\otimes R)$-action }\otimes 1} 
\ar@<-2pt>[rrr]_-{1\otimes \textrm{ $(R\otimes R)$-action}} &&&
M \otimes N \ar[r] &
M \bullet N.}
$$
The $\bullet$-monoidal unit $j$ is $R\otimes R$ with the actions
$
x(p\otimes q)y=xp \otimes yq.
$
As explained in \cite[Section 5.2]{BohmLack}, this duoidal category $\mathsf{bim}(R)$ arises in the way described in Section \ref{sec:B(M,M)}. That is, it is the endohom category of a naturally Frobenius map monoidale $R$ in the monoidal bicategory of algebras, bimodules and bimodule maps.

In \cite[Example 6.44]{AguiarMahajan} the bimonoids in this duoidal category were identified with a particular kind of Takeuchi bialgebroid in \cite{Takeuchi} (see also \cite{Lu} and \cite{KadisonSzlachanyi}). The occurring bialgebroids are distinguished by the property that their source and target maps below land in the center of the total algebra.
Explicitly, a monoid in $(\mathsf{bim}(R),\bullet,j)$ is characterized by a $k$-algebra $A$ --- termed the {\em total algebra} --- and an algebra homomorphism $\eta$ from $R\otimes R$ to the center of $A$. Such a homomorphism $\eta$ is conveniently encoded in a pair of algebra homomorphisms $s:=\eta(-\otimes 1)$ --- the {\em source map} --- and $t:=\eta(1\otimes -)$ --- the {\em target map} --- from $R$ to the center of $A$ (so that $\eta(x\otimes y)=s(x)t(y)$). 
A comonoid in $(\mathsf{bim}(R),\circ,i)$ consists of an $R$-bimodule $A$ with $R$-bimodule maps $\delta:A \to A\circ A$ and $\varepsilon:A \to R$ such that $\delta$ is coassociative with the counit $\varepsilon$.
The bimonoid compatibility axioms translate to the conditions that the $R$-actions corresponding to the comonoid structure of $A$ are 
\begin{equation} \label{eq:A_Rbim}
xhy=s(x)t(y)h, \qquad
\forall x,y\in R,\ h\in A,
\end{equation}
and that $\delta:A \to A\circ A$ and $\varepsilon:A \to R$ are algebra homomorphisms (with respect to the well-defined factorwise multiplication on $A\circ A$).
Such a datum can be seen as a left bialgeboid in the sense of \cite{KadisonSzlachanyi} and, interchanging the roles of the source and target maps, also as a right bialgebroid in the sense of \cite{KadisonSzlachanyi}.
 
By \cite[Section 8.3]{BohmLack} an antipode (in the sense of \cite[Theorem 7.2]{BohmLack}) for such a bimonoid in $\mathsf{bim}(R)$ is a linear map $\sigma:A \to A$ satisfying 
$$
\begin{array}{ll}
\sigma(s(x)h)=t(x)\sigma(h) \qquad
& \sigma(t(x)h)=s(x)\sigma(h) \\
h_1\sigma(h_2)=s(\varepsilon(h)) \qquad
&\sigma(h_1)h_2=t(\varepsilon(h))
\end{array}
$$
for all $h\in A$ and $x\in R$, where a Sweedler type index notation $\delta(h)=h_1\circ h_2$ is used, with implicit summation understood; and juxtaposition stands for the multiplication of elements in the $k$-algebra $A$.
This structure is termed a {\em Hopf algebroid $A$ with central base algebra $R$}, because it can be seen as particular instance of a Hopf algebroid in the sense of \cite[Definition 2.2]{Hgd_int} (see also \cite{BSz:hgd} for a slightly more restrictive case) --- with left bialgeboid structure $(A,R,s,t,\delta,\varepsilon)$, right bialgeboid structure $(A,R,t,s,\delta,\varepsilon)$ and antipode $\sigma$.
If not only the algebra $R$ but also $A$ is commutative then this reduces to a groupoid object in the opposite of the category of commutative algebras and their homomorphisms, discussed in Appendix A.1 of \cite{Ravenel}.

Summarizing the above facts, from Theorem \ref{thm:Maschke} we obtain the following.

\begin{theorem} \label{thm:hgd_Maschke}
For any commutative algebra $R$ and any Hopf algebroid $A$ with central base algebra $R$, the following assertions --- formulated using the notation of this section --- are equivalent.
\begin{itemize}
\item[{(i)}] 
With the multiplication $A \bullet A \to A$ and the unit $R\otimes R \to A$, $x\otimes y \mapsto s(x)t(y)$, $A$ is a separable monoid in $(\mathsf{bim}(R),\bullet,j)$.
\item[{(ii)}] There is a normalized left integral in $A$ in the sense of Definition \ref{def:integral}. That is, an element $n$ of $A$ satisfying the following conditions.
\begin{itemize}
\item[$\bullet$] For all $h\in A$, $hn-s(\varepsilon(h))n$ belongs to the ideal $\{s(x)k-t(x)k\vert x\in R,\ k\in A\}$ in $A$.
\item[$\bullet$] $\varepsilon(n)=1$.
\end{itemize}
\item[{(iii)}] There is a normalized right integral in $A$ in the sense of Definition \ref{def:integral}. That is, an element $n$ of $A$ satisfying the following conditions.
\begin{itemize}
\item[$\bullet$] For all $h\in A$, $nh-s(\varepsilon(h))n$ belongs to the ideal $\{s(x)k-t(x)k\vert x\in R,\ k\in A\}$ in $A$.
\item[$\bullet$] $\varepsilon(n)=1$.
\end{itemize}
\end{itemize}
\end{theorem}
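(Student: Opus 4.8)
The plan is to deduce the theorem as a direct specialization of the general Maschke-type Theorem \ref{thm:Maschke}, with the only real work being the translation of its abstract conditions into the element-wise statements of (i)--(iii). As recalled in this section, $\mathsf{bim}(R)$ is the duoidal category $\mathcal B(M,M)$ attached to the naturally Frobenius map monoidale $R$ in the monoidal bicategory of algebras, bimodules and bimodule maps, and a Hopf algebroid $A$ with central base $R$ is exactly a Hopf monoid $a$ in it. Since condition (i) here is verbatim condition (i) of Theorem \ref{thm:Maschke}, it remains to show that conditions (ii) and (iii) here are the element-wise renderings of the normalized left and right integrals of Theorem \ref{thm:Maschke}(ii),(iii); the equivalences then come for free from Theorem \ref{thm:Maschke}.

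First I would make the carrier $a\bullet i$ explicit. Writing $J:=\{s(x)k-t(x)k\mid x\in R,\ k\in A\}$, the identification of $R$-bimodules with $R\otimes R$-modules turns the $\bullet$-product into the $R\otimes R$-module tensor product, so $a\bullet i = A\otimes_{R\otimes R}R$. Since the $R\otimes R$-action on $i=R$ factors through the multiplication $R\otimes R\to R$, whose kernel is generated by $x\otimes 1-1\otimes x$, one gets a natural isomorphism $a\bullet i\cong A/J$ sending $h\otimes 1$ to the coset $\overline h$. Under this isomorphism a morphism $\theta\colon i\to a\bullet i$ in $\mathsf{bim}(R)$ is determined by the single element $n:=\theta(1)\in A$, taken modulo $J$; the bimodule coherence needed for $\theta$ to be well defined is automatic because $s(x)n\equiv t(x)n$ in $A/J$.

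Next I would unwind the three diagrams of \eqref{eq:int} for this $\theta$. In $a\bullet i\cong A/J$ the composite $\xi_0\circ(\varepsilon\bullet 1)$ sends $\overline h$ to $\varepsilon(h)$, while $\mu\bullet 1$ composed with $1\bullet\theta$ sends $\overline h$ to $\overline{hn}$; equating them gives the first displayed bullet of (ii), namely $hn-s(\varepsilon(h))n\in J$ for all $h\in A$. The normalization diagram reads $\varepsilon(n)=1$ after applying $\xi_0\circ(\varepsilon\bullet 1)$ to $\theta(1)=\overline n$. The remaining right $i$-module axiom holds automatically, both composites carrying $r\otimes r'$ to $\overline{s(rr')n}$, and hence imposes nothing. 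Thus a normalized left integral is precisely an $n\in A$ with the two bullets of (ii), and the symmetric computation with $i\bullet a\cong A/J$ (where the left $a$-module axiom becomes $nh-s(\varepsilon(h))n\in J$) yields (iii); here $s(\varepsilon(h))n$ and $t(\varepsilon(h))n$ agree modulo $J$, so the choice of $s$ is immaterial.

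The main obstacle I anticipate is the bookkeeping in the second and third steps: one must pin down the concrete shapes of the structure $2$-cells $\xi_0$, $\varepsilon\bullet 1$ and $\mu\bullet 1$ as maps between the balanced tensor products $A\otimes_{R\otimes R}R$, $R\otimes_{R\otimes R}R$ and $A\otimes_{R\otimes R}A\otimes_{R\otimes R}R$, and verify that each element-chase respects the coequalizer relations defining $\bullet$. Once these identifications are fixed the verifications are short, and no argument beyond Theorem \ref{thm:Maschke} and the centrality of $s,t$ is required; this mirrors the derivation of Theorem \ref{thm:wha_int} in the weak Hopf algebra case.
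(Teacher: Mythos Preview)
Your proposal is correct and matches the paper's approach: the paper presents Theorem~\ref{thm:hgd_Maschke} simply as what one obtains ``from Theorem~\ref{thm:Maschke}'' after the preceding identification of Hopf algebroids over central base $R$ with Hopf monoids in $\mathsf{bim}(R)$, leaving the element-wise unpacking implicit. Your computation of $a\bullet i\cong A/J$ and the translation of the three diagrams of \eqref{eq:int} into the bulleted conditions is exactly the intended content, and your observation that the right $i$-module axiom is automatic is correct.
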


The equivalent assertions of  \cite[Theorem 3.1]{Hgd_int} imply the assertions of Theorem \ref{thm:hgd_Maschke} --- separability over $R$ implies separability over $R\otimes R$ --- but not conversely: the Hopf algebroid $R\otimes R$ in \cite[Example 3.1]{Lu} satisfies the assertions of Theorem \ref{thm:hgd_Maschke} but not those of \cite[Theorem 3.1]{Hgd_int} (for arbitrary commutative algebras $R$).

On the contrary, from Theorem \ref{thm:DualMaschke} we re-obtain \cite[Theorem 3.2]{Hgd_int} for Hopf algebroids with central base algebra as follows.

\begin{theorem} \label{thm:hgd_DualMaschke}
For any commutative algebra $R$ and any Hopf algebroid $A$ with central base algebra $R$, the following assertions --- formulated using the notation of this section --- are equivalent.
\begin{itemize}
\item[{(i)}] 
The comonoid $(A,\delta,\varepsilon)$ in $(\mathsf{bim}(R),\circ,i)$ --- where $A$ is understood to be an $R$-bimodule as in \eqref{eq:A_Rbim} --- is coseparable. 
\item[{(ii)}] There is a normalized left cointegral in the coinciding senses of Definition \ref{def:cointegral} and \cite[Definition 2.9]{Hgd_int}. That is, a linear map $\nu:A\to R$ subject to the  following conditions.
\begin{itemize}
\item[$\bullet$] $\nu(s(x)h)=x\nu(h)$ in $R$, for all $x\in R$ and $h\in A$.
\item[$\bullet$] $h_1 t(\nu(h_2))=s(\nu(h))$ in $A$, for all $h\in A$.
\item[$\bullet$] $\nu(1_A)=1_R$.
\end{itemize}
\item[{(iii)}] 
There is a normalized right cointegral in the coinciding senses of Definition \ref{def:cointegral} and \cite[Definition 2.9]{Hgd_int}. That is, a
linear map $\nu:A\to R$ subject to the  following conditions.
\begin{itemize}
\item[$\bullet$] $\nu(t(x)h)=x\nu(h)$ in $R$, for all $x\in R$ and $h\in A$.
\item[$\bullet$] $s(\nu(h_1))h_2=t(\nu(h))$ in $A$, for all $h\in A$.
\item[$\bullet$] $\nu(1_A)=1_R$.
\end{itemize}
\end{itemize}
\end{theorem}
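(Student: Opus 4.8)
The plan is to specialize the abstract Theorem~\ref{thm:DualMaschke} to the duoidal category $\mathsf{bim}(R)$, which --- as recalled above, following \cite[Section 5.2]{BohmLack} and \cite[Examples 6.18 and 6.44]{AguiarMahajan} --- is the endohom category of the naturally Frobenius map monoidale $R$ in the bicategory of algebras, bimodules and bimodule maps, and in which the Hopf algebroid $A$ is a Hopf monoid. Theorem~\ref{thm:DualMaschke} then yields at once the equivalence of the coseparability of $(a,\delta,\varepsilon)$ with the existence of a normalized left (respectively, right) cointegral $a\circ j\to j$ (respectively, $j\circ a\to j$) in the sense of Definition~\ref{def:cointegral}. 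Since the abstract assertion Theorem~\ref{thm:DualMaschke}~(i) is verbatim the present assertion~(i), all that remains is to translate the abstract cointegrals into the element-wise data of~(ii) and~(iii), and to check that this reproduces \cite[Definition 2.9]{Hgd_int}.

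For the left cointegral I would first compute the relevant $\circ$-tensor products explicitly. Using the $R$-bimodule structure \eqref{eq:A_Rbim} on $A$ and the actions $x(p\otimes q)y=xp\otimes yq$ on $j=R\otimes R$, one identifies $a\circ j=A\otimes_R(R\otimes R)\cong A\otimes R$ and $j\circ j\cong R\otimes R\otimes R$, and records the concrete forms of $\xi^0\colon j\to j\circ j$, of the unit $\eta\colon R\otimes R\to A$, $x\otimes y\mapsto s(x)t(y)$, and of $\delta$. An $R$-bimodule map $\theta\colon a\circ j\to j$ is determined by the element $\theta(h\otimes 1)\in R\otimes R$; the right $j$-comodule axiom (the middle diagram of \eqref{eq:cointegral}) forces this element into the first tensor factor $R\otimes 1$, so that $\theta$ corresponds to a unique linear map $\nu\colon A\to R$ via $\theta(h\otimes 1)=\nu(h)\otimes 1$. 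It then remains to unwind the remaining diagrams: left $R$-linearity of $\theta$ gives $\nu(s(x)h)=x\nu(h)$; the left $a$-comodule axiom (leftmost diagram), after moving the scalar $\nu(h_2)$ across the $\circ$ via the right action $h\cdot x=t(x)h$ and applying $\eta$, gives $h_1t(\nu(h_2))=s(\nu(h))$; and the normalization diagram gives $\nu(1_A)=1_R$. These are exactly the conditions of~(ii) and of \cite[Definition 2.9]{Hgd_int}, establishing the equivalence~(i)$\Leftrightarrow$(ii).

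The equivalence~(i)$\Leftrightarrow$(iii) follows symmetrically, most economically by invoking the reversal duality of Remark~\ref{rem:rev}: passing to the naturally Frobenius map monoidale $(M,m^*,u^*)$ interchanges the roles of the source and target maps $s\leftrightarrow t$, so the very same computation with $s$ and $t$ swapped turns the left-cointegral conditions into the right-cointegral conditions of~(iii).

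The main obstacle is the bookkeeping in the middle step: one must pin down the concrete forms of the structure $2$-cells $\xi^0$, $\eta$ and $\delta$ of the duoidal category $\mathsf{bim}(R)$ and keep careful track of the several tensor products over $R$ (and over $R\otimes R$), since it is precisely this unwinding that justifies the claim that Definition~\ref{def:cointegral} and \cite[Definition 2.9]{Hgd_int} define the \emph{same} notion of cointegral. Once the identification $\theta\leftrightarrow\nu$ and the concrete structure maps are in hand, each of the three element-wise conditions drops out of one of the three diagrams of \eqref{eq:cointegral} by a direct computation.
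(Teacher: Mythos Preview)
Your proposal is correct and follows exactly the paper's approach: the paper presents this theorem without a separate proof, introducing it simply as what one obtains ``from Theorem~\ref{thm:DualMaschke}'' in the particular duoidal category $\mathsf{bim}(R)$, with the element-wise translation of the cointegral conditions stated as part of the theorem itself. Your outline of that translation (identifying $a\circ j\cong A\otimes R$, reading off $\nu$ from $\theta$, and matching the three diagrams of \eqref{eq:cointegral} to the three bullet points) is precisely the bookkeeping the paper leaves implicit.
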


\subsection{Hopf monads on autonomous monoidal categories}

In the bicategory $\mathsf{prof}$ the 0-cells are the (small) categories, the 1-cells $\mathsf A \to \mathsf B$ are the {\em profunctors} --- that is, the functors $\mathsf B^{\mathsf{op}} \times \mathsf A \to \mathsf{set}$ --- and the 2-cells are the natural transformations. The horizontal composition of any 1-cells $F:\mathsf A \to \mathsf B$ and $G:\mathsf B \to \mathsf C$ is given by the coend $\int^{b\in \mathsf B^0} F(-,b) \times G(b,-)$.
This bicategory $\mathsf{prof}$ is monoidal via the cartesian product of categories.
Any functor $f:\mathsf A \to \mathsf B$ can be regarded as a left adjoint profunctor $\mathsf B(-,f-)$; that is, as a map in $\mathsf{prof}$.
In this way any monoidal category $(\mathsf C,\otimes,K)$ can be seen as a map monoidale in $\mathsf{prof}$. It is furthermore naturally Frobenius if and only if $(\mathsf C,\otimes,K)$ is {\em autonomous}; that is, every object has a left and a right dual, see \cite{LopezFranco,LF-S-W}.
In this case there is a duoidal structure on $\mathsf{prof}(\mathsf C,\mathsf C)$ as in Section \ref{sec:B(M,M)}.

Via the above inclusion of $\mathsf{cat}$ into $\mathsf{prof}$, any monoidal comonad $t$ on  a monoidal category $(\mathsf C,\otimes,K)$ gives rise to a monoidal comonad on the naturally Frobenius map monoidale $(\mathsf C,\otimes,K)$ in $\mathsf{prof}$; hence to a bimonoid in the duoidal category $\mathsf{prof}(\mathsf C,\mathsf C)$.
By \cite[Section 8.5]{BohmLack}, this bimonoid is a Hopf monoid
if and only if it is a Hopf comonad in the sense of \cite[Section 3.6]{BruguieresVirelizier}; equivalently, in the sense of \cite[Section 2.7]{BLV}.

While Theorem \ref{thm:Maschke} seems to have no interesting message in this case, from Theorem \ref{thm:DualMaschke} we re-obtain \cite[Theorem 6.5]{BruguieresVirelizier} (see also \cite[Theorem 8.11]{TuraevVirelizier}) as follows.

\begin{theorem}
For an autonomous monoidal category $(\mathsf C,\otimes,K)$, and a Hopf comonad $t$ on $(\mathsf C,\otimes,K)$ --- with comonad structure $(\delta,\varepsilon)$ and monoidal structure $(t_2,t_0)$ --- the following assertions are equivalent.
\begin{itemize}
\item[{(i')}] The comonoid $(t,\delta,\varepsilon)$ is coseparable in $\mathsf{cat}(\mathsf C,\mathsf C)$
(considered with the monoidal structure provided by the composition of functors).
\item[{(i)}] The comonoid $(t,\delta,\varepsilon)$ is coseparable in the monoidal category $(\mathsf{prof}(\mathsf C,\mathsf C),\circ,i)$.
\item[{(ii)}] There exists a normalized left cointegral in the coinciding senses of Definition \ref{def:cointegral} and \cite[Section 6.3]{BruguieresVirelizier}. That is, a morphism $\Lambda:t(K)\to K$ rendering commutative the following diagrams. 
$$
\xymatrix{
t(K) \ar[r]^-{\delta_K} \ar[d]_-\Lambda &
tt(K) \ar[d]^-{t(\Lambda)} \\
K \ar[r]_-{t_0} &
t(K)}
\qquad \qquad
\xymatrix{
K \ar[r]^-{t_0} \ar@{=}[rd] &
t(K) \ar[d]^-\Lambda \\
& K}
$$
\item[{(iii)}] There exists a normalized right cointegral in the sense of Definition \ref{def:cointegral}. That is, a retraction $\tau$ of the natural transformation
$$
\xymatrix{
\mathsf C(K,-) \ar[r]^-t &
\mathsf C(t(K),t(-)) \ar[r]^-{\mathsf C(t_0,1)} &
\mathsf C(K,t(-))}
$$
rendering commutative the following diagram.
$$
\xymatrix{
\mathsf C(K,t(-)) \ar[rr]^-{\mathsf C(K,\delta_{-})} \ar[d]_-\tau &&
\mathsf C(K,tt(-)) \ar[d]^-{\tau_{t(-)}} \\
\mathsf C(K,-) \ar[r]_-t &
\mathsf C(t(K),t(-)) \ar[r]_-{\mathsf C(t_0,1)} &
\mathsf C(K,t(-))}
$$
\end{itemize}
\end{theorem}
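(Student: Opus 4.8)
The plan is to derive the whole statement from the abstract Theorem \ref{thm:DualMaschke}, supplemented by two translations: one passing between coseparability in $\mathsf{cat}(\mathsf C,\mathsf C)$ and in $\mathsf{prof}(\mathsf C,\mathsf C)$, and one rewriting the abstract cointegral conditions of Definition \ref{def:cointegral} in concrete profunctor terms. For the equivalence of (i), (ii) and (iii), recall (via \cite[Section 8.5]{BohmLack}) that $t$ is a Hopf monoid in the duoidal category $\mathsf{prof}(\mathsf C,\mathsf C)$, so (i), (ii), (iii) are precisely conditions (i), (ii), (iii) of Theorem \ref{thm:DualMaschke} once the abstract cointegrals are unwound. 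The unwinding is a coend computation. Writing $t$ for the map profunctor $\mathsf C(-,t-)$ and $j$ for $u.u^*\cong\mathsf C(-,K)\times\mathsf C(K,-)$, the coYoneda lemma gives $(t\circ j)(X,Z)\cong\mathsf C(X,tK)\times\mathsf C(K,Z)$ and $(j\circ t)(X,Z)\cong\mathsf C(X,K)\times\mathsf C(K,tZ)$. Thus a left cointegral $\theta\colon t\circ j\to j$ is a transformation of the first factors, natural in the contravariant variable, hence by the Yoneda lemma induced by a single morphism $\Lambda\colon t(K)\to K$; the left $t$-comodule and normalization conditions of \eqref{eq:cointegral} then become exactly the two squares of (ii). Dually, a right cointegral $\theta\colon j\circ t\to j$ acts on the second factor $\mathsf C(K,t-)\Rightarrow\mathsf C(K,-)$, natural in the covariant variable, hence is a genuine natural transformation $\tau$ of endofunctors of $\mathsf C$; its comodule and normalization conditions turn into the retraction property and the displayed square of (iii). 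The asymmetry between a single arrow $\Lambda$ and a natural transformation $\tau$ is forced by whether $t$ falls on the representing object or in the representing variable.

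For (i')$\Leftrightarrow$(i) I would use the canonical inclusion $E\colon\mathsf{cat}(\mathsf C,\mathsf C)\to\mathsf{prof}(\mathsf C,\mathsf C)$, $f\mapsto\mathsf C(-,f-)$. By the Yoneda lemma $E$ is fully faithful on $2$-cells, and a routine coend calculation shows it is strong monoidal from the composition structure on $\mathsf{cat}(\mathsf C,\mathsf C)$ to $(\mathsf{prof}(\mathsf C,\mathsf C),\circ,i)$: composites of maps represent composites of functors, and the identity functor represents $i$. Since the comonoid $(t,\delta,\varepsilon)$ together with its comultiplication lies in the image of $E$, a bicomodule retraction of $\delta$ in $\mathsf{cat}(\mathsf C,\mathsf C)$ is carried by $E$ to one of $E\delta$; conversely, any bicomodule retraction of $E\delta$ is a $2$-cell between the representable profunctors $E(t\circ t)\cong Et\circ Et$ and $Et$, hence is $E$ of a unique $2$-cell satisfying the corresponding bicomodule identities. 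This yields (i')$\Leftrightarrow$(i) and closes the circle of equivalences.

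The step I expect to be the main obstacle is the translation of the abstract right cointegral $\theta\colon j\circ t\to j$ — which by \eqref{eq:Theta} is assembled from the antipode, the Frobenius $2$-cell $\psi$ and the multiplication $\mu$ — into the explicit retraction $\tau$ of (iii). Identifying the natural transformation $\mathsf C(K,-)\to\mathsf C(K,t-)$ that $\tau$ retracts requires carrying the monoidal unit constraint $t_0$ across the adjunctions $m\dashv m^*$ and $u\dashv u^*$ that define the map monoidale, and then checking that the comodule condition of \eqref{eq:cointegral} reproduces precisely the square displayed in (iii). By contrast, the $\Lambda$-side of the computation and the strong monoidality of $E$ are formal Yoneda and coend bookkeeping.
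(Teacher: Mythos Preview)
Your approach is essentially the same as the paper's: it simply states that the theorem follows from Theorem~\ref{thm:DualMaschke} applied to the naturally Frobenius map monoidale $\mathsf C$ in $\mathsf{prof}$, together with the identification of Hopf comonads with Hopf monoids from \cite[Section 8.5]{BohmLack}; the paper gives no further detail, so your coend/Yoneda unwinding actually supplies more than what is written there.

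Two small points deserve attention. First, your parenthetical in the final paragraph is mistaken: the right cointegral $\theta\colon j\circ t\to j$ is \emph{not} ``assembled from the antipode, $\psi$ and $\mu$ by \eqref{eq:Theta}''. Equation~\eqref{eq:Theta} does the opposite: it takes a given $\theta$ as input and produces the bicomodule retraction $\Theta$ of $\delta$. For the translation you actually need, $\theta$ is just an arbitrary morphism of profunctors subject to the three conditions of \eqref{eq:cointegral}, and no antipode enters. Second, in identifying a left cointegral with a single arrow $\Lambda\colon t(K)\to K$, you invoke only the left $t$-comodule and normalization conditions, but a cointegral in Definition~\ref{def:cointegral} also carries a right $j$-comodule condition (the middle diagram of \eqref{eq:cointegral}). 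It is precisely this condition that forces $\theta$ to act only on the factor $\mathsf C(-,tK)$ and to be the identity on $\mathsf C(K,-)$; without it a general profunctor morphism $t\circ j\to j$ could mix the two factors. The analogous left $j$-comodule condition plays the same role on the right-cointegral side. Once this is made explicit, your Yoneda argument goes through and the (i')$\Leftrightarrow$(i) step via the fully faithful strong monoidal embedding $\mathsf{cat}(\mathsf C,\mathsf C)\hookrightarrow(\mathsf{prof}(\mathsf C,\mathsf C),\circ,i)$ is exactly right.
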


\subsection{Hopf categories}

For any braided monoidal small category $(\mathsf C,\otimes, K)$, categories enriched in the monoidal category of comonoids in $\mathsf C$ were studied in \cite{BCV:HopfCat}. This includes small categories themselves (that are re-obtained if $(\mathsf C,\otimes, K)$ is the cartesian monoidal category  $\mathsf{set}$ of sets).

In \cite{span|V}, a monoidal bicategory $\mathsf{span}\vert {\mathsf C}$ was associated to the same datum; that is, to a braided monoidal small category  $(\mathsf C,\otimes, K)$. Any set $X$ was shown to induce a naturally Frobenius map monoidale in the vertically opposite bicategory $(\mathsf{span}\vert {\mathsf C})^\vop$ --- obtained from $\mathsf{span}\vert {\mathsf C}$ by formally reversing the 2-cells ---; also denoted by the same symbol $X$.
Categories enriched in the category of comonoids in $\mathsf C$, with the object set $X$, were interpreted as opmonoidal monads in $(\mathsf{span}\vert {\mathsf C})^\vop$ on the naturally Frobenius map monoidale $X$ (with suitably chosen 1-cell parts; see below). Consequently, they can be seen as bimonoids in the duoidal endohom category $(\mathsf{span}\vert {\mathsf C})^\vop(X,X)=(\mathsf{span}\vert {\mathsf C})(X,X)^{\mathsf{op}}$.
By \cite[Paragraph 4.11]{span|V}, a category enriched in the category of comonoids in $\mathsf C$, with the object set $X$, is a {\em Hopf $\mathsf C$-category} in the sense of \cite[Definition 3.3]{BCV:HopfCat} if and only if the corresponding bimonoid in $(\mathsf{span}\vert {\mathsf C})^\vop(X,X)$ 
is a Hopf monoid. In particular, a Hopf $\mathsf{set}$-category is precisely a small groupoid.

In this section we apply Theorem \ref{thm:Maschke} and Theorem \ref{thm:DualMaschke} to Hopf $\mathsf C$-categories as  above.

Without repeating the description of the monoidal bicategory $(\mathsf{span}\vert {\mathsf C})^\vop$ and its naturally Frobenius map monoidale provided by a set $X$, we only present briefly the the resulting  duoidal endohom category $(\mathsf{span}\vert {\mathsf C})^\vop(X,X)$. An object in it consists of a span 
$\xymatrix@C=12pt{X & \ar[l] A \ar[r] & X}$ and a map (of sets) $a$ from $A$ to the set of objects of $\mathsf C$.
A morphism $(A,a) \to (A',a')$ consists of a map of spans $f:A' \to A$ and a family of morphisms in $\mathsf C$,
$\{\varphi_h: a'(h) \to af(h)\}$ labelled by the elements $h$ of the set $A'$. The composition of morphisms comes from the composition of maps and the composition in $\mathsf C$.

The $\circ$-monoidal product of any objects $(A,a)$ and $(B,b)$ is given by the pullback span
$$
\xymatrix@R=10pt{
&& B \circ A \ar[ld] \ar[rd] \\
& B \ar[ld] \ar[rd] && A  \ar[ld] \ar[rd] \\
X && X && X}
$$
and the map sending $(d,h)\in B \circ A$ to the object $b(d)\otimes a(h)$ of $\mathsf C$.
The $\circ$-product of morphisms $(f,\varphi):(A,a) \to (A',a')$ and $(g,\gamma):(B,b) \to (B',b')$ consists of the map of spans
$$
g\circ f: B' \circ A' \to B \circ A, \qquad
(d,h) \mapsto (g(d),f(h))
$$
between the pullback spans and the morphisms 
$$
\gamma_d \otimes \varphi_h: b'(d) \otimes a'(h) \to bg(d) \otimes af(h)
\textrm{ in } \mathsf C, \textrm{ for } (d,h) \in B' \circ A'.
$$ 
The $\circ$-monoidal unit $i$ consists of the trivial span
$\xymatrix@C=12pt{X & \ar@{=}[l] X \ar@{=}[r] & X}$
and the constant map sending each element of $X$ to the monoidal unit $K$ of $\mathsf C$.

The $\bullet$-monoidal product of any objects $(A,a)$ and $(B,b)$ is given by the product $B \bullet A$ in the category of spans and their maps; together with the map sending $(d,h)\in B \bullet A$ to the object $b(d)\otimes a(h)$ of $\mathsf C$.
The $\bullet$-product of morphisms $(f,\varphi):(A,a) \to (A',a')$ and $(g,\gamma):(B,b) \to (B',b')$ consists of the map of spans
$$
g\bullet f: B' \bullet A' \to B \bullet A, \qquad
(d,h) \mapsto (g(d),f(h))
$$
between the product spans and the morphisms 
$$
\gamma_d \otimes \varphi_h: b'(d) \otimes a'(h) \to bg(d) \otimes af(h)
\textrm{ in } \mathsf C, \textrm{ for } (d,h) \in B' \bullet A'.
$$ 
The $\bullet$-monoidal unit $j$ consists of the terminal (a.k.a. complete) span
$\xymatrix@C=15pt{X & \ar[l]_-{p_1} X^2 \ar[r]^{p_2} & X}$
and the constant map sending each element of the cartesian product set $X^2\equiv X \times X$ to the monoidal unit $K$ of $\mathsf C$.

A bimonoid in this duoidal category $(\mathsf{span}\vert {\mathsf C})^\vop(X,X)=(\mathsf{span}\vert {\mathsf C})(X,X)^{\mathsf{op}}$ consists of a span
$\xymatrix@C=12pt{X & \ar[l]_-t A \ar[r]^-s & X}$, 
a map $a$ from $A$ to the set of objects in $\mathsf C$ together with the following morphisms:
\begin{itemize}
\item comultiplication $(
\xymatrix@C=12pt{A\circ A \ar[r]^-{\displaystyle \cdot} & A}, 
\xymatrix@C=12pt{a(h) \otimes a(k) \ar[r]^-{\mu_{h,k}} & a(h.k)} )$
\item counit $(
\xymatrix@C=12pt{X \ar[r]^-u & A},
\xymatrix@C=12pt{K \ar[r]^-{\eta_x} & a(u(x))})$
\item multiplication $(
\xymatrix@C=42pt{A \ar[r]^-{h\mapsto (h_1,h_2)} & A \bullet A},
\xymatrix@C=12pt{a(h) \ar[r]^-{\delta_h} & a(h_1) \otimes a(h_2)})$
\item unit $(
\xymatrix@C=12pt{A \ar[r]^-{!} & X^2},
\xymatrix@C=12pt{a(h) \ar[r]^-{\varepsilon_h} & K})$
\end{itemize}
subject to suitable compatibility conditions.

Consider a category enriched in the category of comonoids in $\mathsf C$, with the object set $X$, with hom-comonoids $(a(x,y),\delta_{x,y},\varepsilon_{x,y})$, composition morphisms $\mu_{x,y,z}$ and unit morphisms $\eta_x$ (for $x,y,z\in X$). It gives rise to a bimonoid in $(\mathsf{span}\vert {\mathsf C})^\vop(X,X)$ with the underlying terminal span
$\xymatrix@C=12pt{X & \ar[l]_-{p_1} X^2 \ar[r]^-{p_2} & X}$, 
the map sending $(x,y)\in X^2$ to $a(x,y)$ together with the following morphisms:
\begin{itemize}
\item comultiplication $(
\xymatrix@C=12pt{X^2\circ X^2  \cong X^3 \ar[r]^-{p_{13}} & X^2}, 
\xymatrix@C=18pt{a(x,y) \otimes a(y,z) \ar[r]^-{\mu_{x,y,z}} & a(x,z)} )$
\item counit $(
\xymatrix@C=12pt{X \ar[r]^-\Delta & X^2},
\xymatrix@C=12pt{K \ar[r]^-{\eta_x} & a(x,x)})$
\item multiplication $(
\xymatrix@C=12pt{X^2 \ar[r]^-1 & X^2=X^2 \bullet X^2},
\xymatrix@C=12pt{a(x,y) \ar[r]^-{\delta_{x,y}} & a(x,y) \otimes a(x,y)})$
\item unit $(
\xymatrix@C=12pt{X^2 \ar[r]^-1 & X^2},
\xymatrix@C=12pt{a(x,y) \ar[r]^-{\varepsilon_{x,y}} & K})$.
\end{itemize}
The compatibility conditions translate precisely to the requirements that the composition and the unit morphisms of the enriched category must be comonoid morphisms.
 
By the above considerations Theorem \ref{thm:Maschke}  leads to the following.

\begin{theorem}
For a Hopf $\mathsf C$-category 
$(\{(a(x,y),\delta_{x,y},\varepsilon_{x,y}) \}_{(x,y)\in X^2},
\{\mu_{x,y,z} \}_{(x,y,z)\in X^3},$
$\{\eta_x\}_{x\in X})$, 
the following assertions are equivalent.
\begin{itemize}
\item[{(i)}] The comonoid $\{(a(x,y),\delta_{x,y},\varepsilon_{x,y}) \}_{(x,y)\in X^2}$ in $(\mathsf{span}\vert {\mathsf C}(X,X),\bullet, j)$ is coseparable.
\item[{(i')}] Each of the comonoids $(a(x,y),\delta_{x,y},\varepsilon_{x,y})$ in $({\mathsf C}, \otimes, K)$, for $(x,y)\in X^2$, is coseparable.
\item[{(ii)}] For all $x\in X$, $\eta_x:K \to a(x,x)$ has a left $a(x,x)$-comodule retraction. That is, for all $x\in X$ there is a morphism $\theta_x:a(x,x) \to K$ in $\mathsf C$ rendering commutative the following diagrams.
$$
\xymatrix{
a(x,x) \ar[r]^-{\theta_x} \ar[d]_-{\delta_{x,x}} &
K \ar[d]^-{\eta_x} \\
a(x,x) \otimes a(x,x) \ar[r]_-{1\otimes \theta_x} &
a(x,x)}
\qquad \qquad
\xymatrix@R=27pt{
K \ar[r]^-{\eta_x} \ar@{=}[rd] &
a(x,x) \ar[d]^-{\theta_x} \\
& K}
$$
\item[{(iii)}] For all $x\in X$, $\eta_x:K \to a(x,x)$ has a right $a(x,x)$-comodule retraction. That is, for all $x\in X$, there is a morphism $\theta_x:a(x,x) \to K$ in $\mathsf C$ rendering commutative the following diagrams.
$$
\xymatrix{
a(x,x) \ar[r]^-{\theta_x} \ar[d]_-{\delta_{x,x}} &
K \ar[d]^-{\eta_x} \\
a(x,x) \otimes a(x,x) \ar[r]_-{\theta_x \otimes 1} &
a(x,x)}
\qquad \qquad
\xymatrix@R=27pt{
K \ar[r]^-{\eta_x} \ar@{=}[rd] &
a(x,x) \ar[d]^-{\theta_x} \\
& K}
$$
\end{itemize}
\end{theorem}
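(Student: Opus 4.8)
The plan is to apply Theorem~\ref{thm:Maschke} to the Hopf monoid $a$ in the duoidal category $\mathcal B(M,M)=(\mathsf{span}\vert {\mathsf C})^\vop(X,X)$ and then to translate each of its abstract conditions into the concrete span-and-$\mathsf C$ description recorded above. Since $(\mathsf{span}\vert {\mathsf C})^\vop(X,X)=(\mathsf{span}\vert {\mathsf C})(X,X)\op$, a $\bullet$-monoid in $\mathcal B(M,M)$ is precisely a $\bullet$-comonoid in $(\mathsf{span}\vert {\mathsf C})(X,X)$, and separability of the former in the sense of Section~\ref{sec:Maschke} is exactly coseparability of the latter; note that the handedness of module and integral data is preserved, because passing to the vertical opposite reverses arrows without reversing the order of $\bullet$. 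Under the identifications listed above this $\bullet$-comonoid is the family $\{(a(x,y),\delta_{x,y},\varepsilon_{x,y})\}_{(x,y)\in X^2}$, so condition~(i) of Theorem~\ref{thm:Maschke} becomes verbatim assertion~(i). Thus Theorem~\ref{thm:Maschke} delivers the equivalence of (i), (ii) and (iii) as soon as the abstract integrals have been matched to the comodule-retraction data displayed in (ii) and (iii).

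First I would prove (i)$\Leftrightarrow$(i'). Because the $\bullet$-product of objects of $(\mathsf{span}\vert {\mathsf C})(X,X)$ is the product of spans equipped with the factorwise tensor product in $\mathsf C$ --- and the underlying span $X^2$ of $a$ satisfies $X^2\bullet X^2=X^2$ --- both $a\bullet a$ and the comultiplication $\delta$ are computed componentwise over $X^2$, with components the $\delta_{x,y}$. Hence a $\bullet$-bicomodule retraction of $\delta$ is the same datum as a family, indexed by $(x,y)\in X^2$, of $a(x,y)$-bicomodule retractions of $\delta_{x,y}$ in $(\mathsf C,\otimes,K)$. This componentwise bookkeeping is routine and yields the equivalence (i)$\Leftrightarrow$(i').

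Next I would translate the integrals. A normalized left integral for $a$ in the sense of Definition~\ref{def:integral} is a morphism $\theta\colon i\to a\bullet i$ of $\mathcal B(M,M)$; read in $(\mathsf{span}\vert {\mathsf C})(X,X)$ it becomes a normalized left $a$-comodule retraction. The decisive point is that the $\circ$-unit $i$ is supported on the span whose two legs are both the identity of $X$, sitting over the diagonal of $X^2$ and carrying the constant value $K$; consequently the integral has exactly one nontrivial component at each object $x\in X$, living on the endomorphism hom $a(x,x)$, while the counit $X\to X^2$, $x\mapsto(x,x)$ of $a$ contributes there the morphism $\eta_x\colon K\to a(x,x)$. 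Spelling out the left $a$-comodule axiom together with the normalization (section) condition for this component reproduces exactly the two diagrams of assertion~(ii), with the retraction named $\theta_x$. The right integral $i\to i\bullet a$ is entirely symmetric and yields the mirror diagrams of~(iii), the sole change being that $1\otimes\theta_x$ is replaced by $\theta_x\otimes 1$.

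The main obstacle will be purely organizational: one must propagate the vertical-opposite reversal $(\mathsf{span}\vert {\mathsf C})^\vop(X,X)=(\mathsf{span}\vert {\mathsf C})(X,X)\op$ consistently through Definition~\ref{def:integral} --- keeping in mind that a morphism of $(\mathsf{span}\vert {\mathsf C})(X,X)$ carries a span-map pointing \emph{backwards} together with a forward family of $\mathsf C$-morphisms --- so that module and section conditions become comodule and retraction conditions with the correct handedness, and one must confirm that the supports of the units $i$ and $j$ force all integral data to live on the diagonal components $a(x,x)$ and on the morphisms $\eta_x$. Once this placement of the data on the diagonal is secured, matching the remaining axioms to the displayed squares is a direct diagram chase in $\mathsf C$.
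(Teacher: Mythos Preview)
Your proposal is correct and follows the same route as the paper: the paper does not spell out a proof of this theorem at all but simply states that ``by the above considerations Theorem~\ref{thm:Maschke} leads to the following,'' and your plan --- apply Theorem~\ref{thm:Maschke} in $\mathcal B(M,M)=(\mathsf{span}\vert\mathsf{C})^{\vop}(X,X)$ and unwind the span-level and $\mathsf C$-level components --- is exactly that. Your identification of the key points (that passing to the vertical opposite turns the $\bullet$-monoid separability into the coseparability of (i); that $X^2\bullet X^2=X^2$ makes the (i)$\Leftrightarrow$(i') equivalence a purely componentwise check; and that $a\bullet i$ is supported on the diagonal, forcing the integral data to reduce to morphisms $\theta_x\colon a(x,x)\to K$ satisfying precisely the displayed squares) is accurate and matches the computations implicit in the paper's description of the duoidal structure on $(\mathsf{span}\vert\mathsf{C})^{\vop}(X,X)$.
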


Theorem \ref{thm:DualMaschke}, on the other hand, tells the following.

\begin{theorem}
For a Hopf $\mathsf C$-category 
$(\{(a(x,y),\delta_{x,y},\varepsilon_{x,y}) \}_{(x,y)\in X^2},
\{\mu_{x,y,z} \}_{(x,y,z)\in X^3},$
$\{\eta_x\}_{x\in X})$, 
the following assertions are equivalent.
\begin{itemize}
\item[{(i)}]
The monoid $(\{a(x,y)\},\{\mu_{x,y,z} \},\{\eta_x\})$ in $(\mathsf{span}\vert {\mathsf C}(X,X),\circ, i)$ admits a separability structure. That is, a family of morphisms $\{\partial_{x,y,z}:a(x,z) \to a(x,y) \otimes a(y,z)\}_{(x,y,z)\in X^3}$ making the following diagrams commute, for all $x,y,v,z\in X$.
$$
\xymatrix@C=-4pt@R=12pt{
a(x,y) \otimes a(y,z) \ar[rr]^-{\partial_{x,v,y} \otimes 1} \ar[dd]_-{1\otimes \partial_{y,v,z}} \ar[rd]_-{\mu_{x,y,z}} &&
a(x,v) \otimes a(v,y) \otimes a(y,z) \ar[dd]^-{1\otimes \mu_{v,y,z}} \\
& a(x,z) \ar[rd]^-{\partial_{x,v,z}} \\
a(x,y) \otimes a(y,v) \otimes a(v,z) \ar[rr]_-{\mu_{x,y,v} \otimes 1} &&
a(x,v) \otimes a(v,z)}
\quad
\xymatrix@R=41pt{
a(x,y) \ar[r]^-{\partial_{x,v,y}} \ar@{=}[rd] &
a(x,v) \otimes a(v,y)  \ar[d]^-{\mu_{x,v,y}} \\
& a(x,y).}
$$
\item[{(ii)}] There exists a normalized left cointegral in the sense of Definition \ref{def:cointegral}; equivalently, a normalized {\em left integral family} in the sense of \cite[Definition 4.1]{BFVV:LarsonSweedler} with an additional normalization condition. That is, a family of morphisms in $\mathsf C$, 
$\{\theta_{x,y}:K \to a(x,y)\}_{x,y\in X}$ rendering commutative the following diagrams for all $x,y,z\in X$.
$$
\xymatrix@C=40pt{
a(x,y) \ar[r]^-{1\otimes \theta_{y,z}} \ar[d]_-{\varepsilon_{x,y}} &
a(x,y)  \otimes a(y,z) \ar[d]^-{\mu_{x,y,z}} \\
K \ar[r]_-{\theta_{x,z}} & 
a(x,z)} 
\qquad \qquad
\xymatrix@R=27pt{
K \ar[r]^-{\theta_{x,y}} \ar@{=}[rd] &
a(x,y) \ar[d]^-{\varepsilon_{x,y}} \\
& K}
$$
\item[{(iii)}] There exists a normalized right cointegral in the sense of Definition \ref{def:cointegral}; equivalently, a normalized {\em right integral family} in the sense of \cite[eq. (35)]{BFVV:LarsonSweedler} with an additional normalization condition. That is, a family of morphisms in $\mathsf C$, 
$\{\theta_{x,y}:K \to a(x,y)\}_{x,y\in X}$ rendering commutative the following diagrams for all $x,y,z\in X$.
$$
\xymatrix@C=40pt{
a(y,z) \ar[r]^-{\theta_{x,y} \otimes 1} \ar[d]_-{\varepsilon_{y,z}} &
a(x,y)  \otimes a(y,z) \ar[d]^-{\mu_{x,y,z}} \\
K \ar[r]_-{\theta_{x,z}} & 
a(x,z)} 
\qquad \qquad
\xymatrix@R=27pt{
K \ar[r]^-{\theta_{x,y}} \ar@{=}[rd] &
a(x,y) \ar[d]^-{\varepsilon_{x,y}} \\
& K}
$$
\end{itemize}
\end{theorem}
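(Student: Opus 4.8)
The plan is to read off all three assertions as the images, under the explicit description of the duoidal category $(\mathsf{span}\vert\mathsf C)^\vop(X,X)$ recorded above, of the corresponding assertions of Theorem~\ref{thm:DualMaschke}. By \cite[Paragraph 4.11]{span|V} the given Hopf $\mathsf C$-category is a Hopf monoid $a$ in $(\mathsf{span}\vert\mathsf C)^\vop(X,X)$, so Theorem~\ref{thm:DualMaschke} applies verbatim and already supplies the equivalences among its own conditions (i)--(iii); what remains is to translate each of them into the elementary language of hom-comonoids, composition morphisms and unit morphisms. Because $(\mathsf{span}\vert\mathsf C)^\vop(X,X)=(\mathsf{span}\vert\mathsf C)(X,X)\op$, a coseparable $\circ$-comonoid in the former is the same thing as a separable monoid in $(\mathsf{span}\vert\mathsf C(X,X),\circ,i)$ whose multiplication is the composition $\mu$ and whose unit is $\eta$; this is assertion (i). Thus the three translations I must carry out are exactly for the coseparable comonoid and for the normalized left and right cointegrals.

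First I would compute the relevant $\circ$-products. For assertion (i), $a\circ a$ is the pullback span on $X^3$ with label $(x,y,z)\mapsto a(x,y)\otimes a(y,z)$; the comultiplication $\delta$ has span part $p_{13}$ and $\mathsf C$-part $\mu_{x,y,z}$, while the counit $\varepsilon$ has span part the diagonal and $\mathsf C$-part $\eta_x$. Spelling out a $\circ$-bicomodule retraction of $\delta$ componentwise then yields precisely a family $\partial_{x,y,z}\colon a(x,z)\to a(x,y)\otimes a(y,z)$; the left- and right-comodule axioms collapse to the hexagon displayed in (i) and the counit axiom to the triangle. For assertions (ii) and (iii) I would compute $a\circ j$ (resp.\ $j\circ a$) as the pullback span on $X^3$ with label $a(x,y)$ (resp.\ $a(y,z)$), identify $\xi^0\colon j\to j\circ j$ as the unique span map into the terminal span $j$ carried by the identity on $\mathsf C$-labels, and identify $\eta\colon j\to a$ as the identity span with $\mathsf C$-part $\varepsilon_{x,y}$. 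Unwinding the left $a$-comodule, right $j$-comodule and normalization squares of Definition~\ref{def:cointegral} should then produce the stated left-integral-family identity together with $\varepsilon_{x,y}\theta_{x,y}=\mathrm{id}_K$, and symmetrically for the right cointegral.

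The delicate point is the interaction between the span level and the $\mathsf C$-enriched level. An abstract (co)integral or (co)separability datum is a single $2$-cell of $(\mathsf{span}\vert\mathsf C)^\vop$, and its span component only records behaviour relative to the intermediate objects that the span map selects, whereas the assertions demand data indexed by all of $X^3$ (resp.\ all of $X^2$). Reconciling the two is where the Hopf structure is indispensable: it is the antipode, entering through the $2$-cells $\psi,\kappa,\lambda,\varrho$ of Section~\ref{sec:B(M,M)} and through Proposition~\ref{prop:Theta}, that lets one pass between a single datum over a chosen base object and the full family. I therefore expect these to be genuine equivalences of existence rather than bijections of the underlying data, and the hard part of the write-up will be checking that the normalization clauses force enough of the family to be filled in coherently. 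The instructive test case is $\mathsf C=\mathsf{set}$, where a Hopf $\mathsf C$-category is a small groupoid and all three assertions single out the indiscrete groupoid on $X$; confirming that the translation reproduces this is the concrete sanity check I would run before committing to the general argument.
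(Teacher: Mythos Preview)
Your approach is correct and matches the paper's: the paper offers no separate proof for this theorem, merely introducing it as what Theorem~\ref{thm:DualMaschke} ``tells'' in this example, so the content is precisely the translation you outline in your first two paragraphs.

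Your third paragraph, however, misdiagnoses where the work lies. For assertions~(ii) and~(iii) the translation is direct: a cointegral $\theta\colon a\circ j\to j$ in $(\mathsf{span}\vert\mathsf C)^\vop(X,X)$ has span component a map $X^2\to X^3$, which a priori involves an auxiliary function $X^2\to X$, but the $j$-comodule axiom (together with the fact that $j$ has terminal span, so every span map \emph{into} $X^2$ is forced) pins this down, and the remaining $\mathsf C$-data is exactly the $X^2$-indexed family $\{\theta_{x,y}\}$. No second invocation of the antipode or of the $2$-cells $\psi,\kappa,\lambda,\varrho$ is needed---those have already done their work inside the proof of Theorem~\ref{thm:DualMaschke}, and here one is only reading off what its hypotheses and conclusions say in this particular duoidal category. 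For assertion~(i) the situation is that an abstract bicomodule retraction $a\circ a\to a$ carries a span component $X^2\to X^3$ rather than $X^3$-indexed data, so the displayed family $\{\partial_{x,y,z}\}$ looks nominally stronger; but the explicit retraction $\Theta$ of Proposition~\ref{prop:Theta}, when unwound componentwise, yields a $\mathsf C$-morphism for every triple $(x,y,z)$, and that is what the statement records. This is still translation, not a fresh appeal to the Hopf axioms. Your groupoid sanity check is correct and does single out indiscrete groupoids.
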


%%%%%%%%%%%%%%%%%%%%%%%% BIBLIOGRAPHY   %%%

\bibliographystyle{plain}

\end{document}